\DeclarePairedDelimiter\abs{\lvert}{\rvert}
\DeclarePairedDelimiterX{\norm}[1]{\lVert}{\rVert}{#1}
\newtheorem{theorem}{Theorem}[section]
\theoremstyle{definition}
\newtheorem{lemma}[theorem]{Lemma}
\newtheorem{cor}[theorem]{Corollary}
\newtheorem{definition}[theorem]{Definition}
\newtheorem{remark}[theorem]{Remark}
\newtheorem{example}[theorem]{Example}
\newcommand\blankpage{%
    \null
    \thispagestyle{empty}%
    \addtocounter{page}{-1}%
    \newpage}
\newcommand{\id}{\text{id}}
\newcommand{\vnull}{\mathbb{V}_0}
\newcommand{\veins}{\mathbb{V}_1}
\newcommand{\lzi}{\lambda_0(i)}
\newcommand{\lzj}{\lambda_0(j)}
\newcommand{\lzk}{\lambda_0(k)}
\newcommand{\eqpf}{=_{\mathsmaller{\mathcal{F}(X)}}}
\newcommand{\eqsubset}{=_{\mathsmaller{\mathcal{P}(X)}}}
\newcommand{\eqcompset}{=_{\mathsmaller{\mathcal{P}^{\mathsmaller{\mathsmaller{][}}}(X)}}}
\begin{document}
\title{Families of Sets in Constructive Measure Theory}
\author{Max Zeuner \\ Department of Mathematics, Stockholm University \\ \texttt{zeuner@math.su.se}}
\date{}

\maketitle
\addtocounter{page}{-1}

\afterpage{\blankpage}

\tableofcontents
\afterpage{\blankpage}
\chapter{Introduction}
Within Bishop-style constructive mathematics, or constructive analysis more precisely, the development of constructive measure and integration theory is especially interesting and challenging. Since classical measure theory draws heavily on notions of classical set theory and is highly non-constructive in its character, many central concept cannot directly be translated into a constructive setting. Bishop himself was not entirely satisfied with the generality of the measure-theoretic results in chapters 6 and 7 of his seminal book \cite{Bishop67} and developed a new approach to constructive measure theory together with Cheng in \cite{Bishop1972}. This Bishop-Cheng measure theory (BCMT) is also included and expanded in Bishop and Bridges' book \cite{BB85}.

Accordingly, the presentation of set theory is a bit different in the two books in order to meet the demands of the respective measure theories. The set-theoretic operations on the complemented subsets of a set $X$ were changed to behave better with BCMT's function theoretic approach to integration and measure theory, while parts of chapter 3 of \cite{Bishop67} that became obsolete for BCMT were entirely left out in chapter 3 of \cite{BB85}. However, the general treatment of set theory is rather short in both books. This led to confusions about e.g. the status of the powerset in Bishop-style mathematics. It seems that in BCMT the powerset of a set $X$ is itself treated as a set, while in the measure theory of \cite{Bishop67} the powerset is avoided by invoking set-indexed families of subsets instead.

The use of the powerset makes BCMT an impredicative theory and even though BCMT remains an active area of research\footnote{There is for example the recent book on constructive probability theory by Chan, see \cite{Chan2019}, or work on formalizing BCMT in the proof-assistant Coq, see \cite{Semeria2019}.}, it is desirable to have a predicative approach to constructive measure theory. Such an alternative was realized by the point-free, algebraic approach of Coquand and Palmgren in \cite{Coquand2002}, which was further developed by Spitters in \cite{Spitters2005} and \cite{Spitters2006}. This constructive algebraic measure theory is not only predicative, but also conceived by many as technically and conceptually simpler than BCMT.

However, as already mentioned certain impredicativities were generally avoided by Bishop in his '67 book by using set-indexed families instead of the powerset. So, one might hope that by being more explicit about the use of such families, BCMT can be 'predicativized'. Afterall, set-indexed families of sets and subsets are only briefly introduced in chapter 3 of \cite{BB85} and often appear only implicitly later in the book. Recently Petrakis has studied these families more extensively in \cite{Petrakis2018} and \cite{Petrakis2019} and has applied his results to the theory of Bishop spaces. A full-scale account of various kinds of set-indexed families, and their application to different areas of constructive mathematics, will be given in \cite{Petrakis2020}. In this paper we want follow this strategy to show that central parts of BCMT can be done predicatively.

Of course, we have to revisit Bishop's set theory and introduce all the relevant notions before we can turn to measure theory. The paper will thus be structured as follows:
\begin{enumerate}
\item Following Petrakis' work we will introduce an informal language suitable for Bishop's set theory (BST) in the next section. This will allow us to talk in more detail about the fundamental set-theoretic concepts we will need for BCMT.
\item After that we introduce families of sets and subsets, indexed by some set $I$ in BST. In order to use set-indexed families in BCMT we will also introduce families of complemented subsets and partial functions indexed by some set $I$.
\item Having introduced the necessary technical background, we look at the impredicativities in BCMT and refine the notions integration and measure space by the explicit use of set-indexed families in order to introduce what we call pre-integration and pre-measure spaces that are defined predicatively.
\item We give an example of a pre-measure space and show how the pre-integration space of simple functions over a pre-measure space can be defined.
\item Finally, we introduce the pre-integration space of canonically integrable functions over a pre-integration as a predicative version of the complete extension. This shows that the impredicative totality of integrable functions is not needed in order to construct the complete extension.
\end{enumerate}
This paper revisits only some parts of BCMT, so we will sketch a few directions, which further research using pre-integration and pre-measure spaces could take.

\chapter{Bishop's set theory}
Before we can introduce the various forms set-indexed families we need to discuss the fundamental notions of Bishop set theory (BST) necessary for formulating Bishop-Cheng measure theory. Note that even though some things are presented a bit more detailed here than in \cite{BB85}, we still work informally.

\section{Basics}
In this section we will follow \cite{Petrakis2019} to introduce the basic ingredients of BST.\footnote{See \cite{Petrakis2018} for another formulation. The main difference is that there is a universe of functions $\veins$ but no dependent assignment routines.} First we have a primitive notion of definitional equality $:=$ between terms and a primitive notion of ordered pair $(s,t)$ for some terms $s,t$ together with projections $\mathsf{pr}_1(s,t):=s$ and $\mathsf{pr}_2(s,t):=t$. There is also the primitive notion of a \emph{totality} and a primitive totality of natural numbers $\mathbb{N}$. Any other totality $X$ is defined through a membership condition $x\in X$, i.e. a formula expressing the condition an object $x$ has to satisfy in order to belong to $X$. For any totality $X$, an \emph{equality} is a formula $x=_X y$ defined for $x,y\in X$ such that it satisfies the properties of an equivalence relation. For the primitive totality $\mathbb{N}$ we also have a primitive equality $=_{\mathbb{N}}$. A pair $(X,=_X)$, where $X$ is a totality with an equality $=_X$ is a \emph{set} if the member-ship condition expresses a construction. In particular $(\mathbb{N},=_{\mathbb{N}})$ is a set. We will often drop the equality of a set if it is clear from the context.

In BST the notion of a function is not primitive but can be derived from the primitive notion of an \emph{asssignment routine} (or finite routine). For totalities $X,Y$ an assignment routine $\alpha:X\rightsquigarrow Y$ assigns to each element $x\in X$ an element $y\in Y$, in which case we write $\alpha(x):=y$. For any $X$ we have an assignment routine $\id_X:X\rightsquigarrow X$, which assigns each $x\in X$ to itself. If $(X,=_X)$ and $(Y,=_Y)$ are sets, an assignment routine $f:X\rightsquigarrow Y$ is a \emph{function} if
\begin{align*}
\forall x,x'\in X:\;x=_X x'\;\Rightarrow\; f(x)=_Y f(x')
\end{align*}
In this case we write $f:X\rightarrow Y$ and denote the totality of functions  $X\rightarrow Y$ by $\mathbb{F}(X,Y)$. We have an equality $=_{\mathsmaller{\mathbb{F}(X,Y)}}$ on $\mathbb{F}(X,Y)$ which is defined by
\begin{align*}
f=_{\mathsmaller{\mathbb{F}(X,Y)}}g \quad :\Leftrightarrow\quad \forall x\in X:\; f(x)=_Yg(x)
\end{align*}
It easy to check that this satisfies the properties of an equivalence relation and for any two sets $X,Y$ we have the set of functions $(\mathbb{F}(X,Y),=_{\mathsmaller{\mathbb{F}(X,Y)}})$ from $X$ to $Y$. We say a function $e:X\rightarrow Y$ is an embbeding if
\begin{align*}
\forall x,x'\in X:\;x=_X x'\;\Leftrightarrow\; e(x)=_Y e(x')
\end{align*}
In this case we write $e:X\hookrightarrow Y$. We now want to give an important example of a totality that is not a set.

\begin{example}\label{universe}
Let $\vnull$ be the totality of sets. We have an equality on $\vnull$ defined by
\begin{align*}
X=_{\vnull}Y \;:\Leftrightarrow\; \exists f\in\mathbb{F}(X,Y)\;\exists g\in\mathbb{F}(Y,X)\text{ s.t. }f\circ g=_{\mathsmaller{\mathbb{F}(X,X)}}\id_X \;\&\; g\circ f=_{\mathsmaller{\mathbb{F}(Y,Y)}}\id_Y
\end{align*}
In this case we write $(f,g):X=_{\vnull}Y$ However, $(\vnull,=_{\vnull})$ is not a set, since in this case we would have $\vnull\in\vnull$, which is not acceptable from a predicative point of view. There is no general construction to construct a set $X\in\vnull$. Since we still have an equality on $\vnull$, we say that it is a \emph{class}.
\end{example}

Now let $I$ be a set and consider an assignment routine $\lambda_0:I\rightsquigarrow\vnull$. We have a primitive notion of a \emph{dependent assignment routine} over $\lambda_0$, which we will write as
\begin{align*}
\Phi:\bigcurlywedge_{i\in I}\lzi
\end{align*}
and which assigns to each element $i\in I$ an element $\Phi_i\in\lzi$. We have a totality $\mathbb{A}(I,\lambda_0)$ of dependent assignment routines over $\lambda_0$ with equality
\begin{align*}
\Phi=_{\mathsmaller{\mathbb{A}(I,\lambda_0)}}\Psi \;:\Leftrightarrow\;\forall i\in I:\;\Phi_i=_{\lzi}\Psi_i
\end{align*}

Let $X$ be a set, a \emph{subset} of $X$ is a pair $(A,i_A)$, where $A$ is a set and $i_A:A\hookrightarrow X$ is an embedding. We have a totality $\mathcal{P}(X)$ of subsets of $X$, which is not a set since this would require that $\vnull$ is a set. However, we still have an equality $\eqsubset$ that is defined by
\begin{align*}
(A,i_A)\eqsubset(B,i_B)\;:\Leftrightarrow\;\exists f\in\mathbb{F}(A,B)\;\exists g\in\mathbb{F}(B,A):\;&\forall a\in A:\; i_A(a)=_X\big(i_B\circ f\big)(a)\\
\&\;&\forall b\in B:\; i_B(b)=_X\big(i_A\circ g\big)(b)
\end{align*}
i.e. the following diagrams commute
\[
\begin{tikzcd}
A \arrow[rr, "f", bend left]\arrow[rdd, "i_A"', hook, bend right] &&B \arrow[ll, "g", bend left]\arrow[ldd, "i_B",left hook->, bend left] \\
\\
&X
\end{tikzcd}
\]
In this case we say that $f$ and $g$ \emph{witness} the equality of $A$ and $B$ as subsets and write $(f,g):A\eqsubset B$, as we will often drop the the embedding of a subset if it is clear from the context. It is easy to prove that in this case $f$ and $g$ are embeddings and
\begin{align*}
(f,g):A\eqsubset B\;\Rightarrow\;(f,g):A=_{\vnull}B
\end{align*}
The next lemma shows that the witnesses for the equality of subsets are unique. This fact will be usefull later.
\begin{lemma}\label{uniquewitness}
Let $X$ be a set and $(A,i_A)$ and $(B,i_B)$ be subsets of $X$. Let $f,\tilde{f}:A\rightarrow B$ and $g,\tilde{g}:B\rightarrow A$ be functions such that
\begin{itemize}
\item $(f,g):A\eqsubset B$ and
\item $(\tilde{f},\tilde{g}):A\eqsubset B$
\end{itemize}
then $f=_{\mathsmaller{\mathbb{F}(A,B)}}\tilde{f}$ and $g=_{\mathsmaller{\mathbb{F}(B,A)}}\tilde{g}$. 
\end{lemma}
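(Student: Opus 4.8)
The plan is to unfold both witness conditions and exploit that $i_A$ and $i_B$ are embeddings, which by the reverse implication in the definition of an embedding reflect equalities in $X$ back to equalities in $A$, respectively $B$. Since equality of functions is pointwise, it suffices to fix an arbitrary $a\in A$ and show $f(a)=_B\tilde{f}(a)$, and symmetrically to fix an arbitrary $b\in B$ and show $g(b)=_A\tilde{g}(b)$.

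First I would handle $f$ and $\tilde{f}$. From $(f,g):A\eqsubset B$ the first commuting triangle gives $i_A(a)=_X\big(i_B\circ f\big)(a)$, and from $(\tilde{f},\tilde{g}):A\eqsubset B$ it gives $i_A(a)=_X\big(i_B\circ\tilde{f}\big)(a)$. By symmetry and transitivity of $=_X$ these combine to $i_B(f(a))=_X i_B(\tilde{f}(a))$. Now the defining property of the embedding $i_B:B\hookrightarrow X$, specifically the implication $i_B(b)=_X i_B(b')\Rightarrow b=_B b'$, yields $f(a)=_B\tilde{f}(a)$. As $a\in A$ was arbitrary, $f=_{\mathsmaller{\mathbb{F}(A,B)}}\tilde{f}$. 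The argument for $g$ and $\tilde{g}$ is entirely symmetric, using the second commuting triangle in each witness: fixing $b\in B$ I would obtain $i_B(b)=_X i_A(g(b))$ and $i_B(b)=_X i_A(\tilde{g}(b))$, combine them to $i_A(g(b))=_X i_A(\tilde{g}(b))$, and apply that $i_A$ is an embedding to conclude $g(b)=_A\tilde{g}(b)$.

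I do not expect a serious obstacle; the proof is a direct diagram chase through the definitions. The one point deserving attention is recognizing that the word \emph{embedding} is exactly what supplies the reflecting implication $e(x)=_Y e(x')\Rightarrow x=_X x'$, which is what converts an equality of images in $X$ back into an equality in $A$ or $B$. It is worth noting that no appeal to the functionality of $f,\tilde{f},g,\tilde{g}$ beyond the two witness conditions is needed: the whole argument takes place at the level of the equality $=_X$ together with the reflecting property of the two embeddings.
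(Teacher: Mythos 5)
Your proposal is correct and coincides with the paper's own proof: both fix an arbitrary $a\in A$, derive $i_B(f(a))=_X i_B(\tilde f(a))$ from the two witness conditions by transitivity, and use that the embedding $i_B$ reflects equality, with the symmetric argument for $g$ and $\tilde g$. No differences worth noting.
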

\begin{proof}
Let $a\in A$, then $i_A(a)=_X i_B\big( f(a)\big)$ and $i_A(a)=_X i_B\big( \tilde{f}(a)\big)$, and hence by transitivity $i_B\big( f(a)\big)=_X i_B\big( \tilde{f}(a)\big)$. Since $i_B$ is an embedding we get that $f(a)=_B\tilde{f}(a)$, and thus $f=_{\mathsmaller{\mathbb{F}(A,B)}}\tilde{f}$. By a completely analogous arument we also get that $g=_{\mathsmaller{\mathbb{F}(B,A)}}\tilde{g}$.
\end{proof}
For two subsets $A$ and $B$ of $X$ we say that $A$ is a subset of $B$ if there is a function $f:A\rightarrow B$ such that for all $a\in A$ we have $i_A(a)=_X\big(i_B\circ f\big)(a)$, i.e. if the following diagram commutes
\[
\begin{tikzcd}
A \arrow[rr, "f",]\arrow[rdd, "i_A"', hook] &&B\arrow[ldd, "i_B",left hook->] \\
\\
&X
\end{tikzcd}
\]
In this case we write $f:A\subseteq B$. Again, it is easy to prove that in this case $f$ is an embedding and we get
\begin{align*}
f:A\subseteq B\;\&\;g:B\subseteq A \;\Rightarrow\; (f,g):A\eqsubset B
\end{align*}

When we discuss BCMT later, most of the subsets we will be concerned with are not obtained by constructing some set and then some embedding, but rather by seperating elements out of some existing set. We shall make this precise now. Let $X$ be a set and $P$ a property defined for elements of $X$, which is \emph{extensional} in the sense that it respects the equality on $X$, i.e. we have that
\begin{align*}
\forall x,y\in X:\;x=_X y \;\Rightarrow\; \big(P(x)\;\Rightarrow\;P(y)\big)
\end{align*}
Let $X_P$ be the totality with membership-condition defined by
\begin{align*}
x\in X_P \;:\Leftrightarrow\;x\in X\;\&\;P(x)
\end{align*}
Then we get an equality on $X_P$ induced by by the equality $=_X$ and  the assignment routine $i_{X_P}:X_P\rightsquigarrow X$ given by $x\mapsto x$ is an embedding. We will usually denote the subset $(X_P,i_{X_P})$ of $X$ by $\{x\in X:\;P(x)\}$ and take the embedding as implicitly given. The next example will be of importance for our definition of set-indexed families.

\begin{example}
Consider two sets $X$ and $Y$, the cartesian product $X\times Y$ is the totality defined by the membership-formula
\begin{align*}
z\in X\times Y \;:\Leftrightarrow\;\exists x\in X\;\exists y\in Y\text{ s.t. }\big(z:=(x,y)\big)
\end{align*}
together with the equality
\begin{align*}
z=_{X\times Y}z'\;:\Leftrightarrow\;\mathsf{pr}_1(z)=_X\mathsf{pr}_1(z')\;\&\;\mathsf{pr}_2(z)=_Y\mathsf{pr}_2(z')
\end{align*}
In particular $X\times Y$ is again a set. consider the following property $P$ on $X\times X$:
\begin{align*}
P(z)\;:\Leftrightarrow\;\big(z:=(x,y)\;\Rightarrow\;x=_X y\big)
\end{align*}
This is an extensional property and we call the set $\big({X\times X}\big)_P$ the \emph{diagonal} of $X$ and denote it by
\begin{align*}
D(X):=\;\{(x,y)\in X\times X:\;x=_X y\}
\end{align*}
\end{example}

\begin{definition}
Let $X$ be a set and $(A,i_A)$ and $(B,i_B)$ be subsets of $X$. Let the union $A\cup B$ be the totality defined by the membership-condition
\begin{align*}
c\in A\cup B \;:\Leftrightarrow\;\big(\exists a\in A\text{ s.t. } c:=a\big)\;\text{or}\;\big(\exists b\in B\text{ s.t. } c:=b\big)
\end{align*}
with the equality
\begin{align*}
c=_{A\cup B}c'\;:\Leftrightarrow\;&\big(\exists a,a'\in A\text{ s.t. }c:=a\;\&\; c':=a'\;\&\; a=_A a'\big)\text{ or}\\
&\big(\exists b,b'\in B\text{ s.t. }c:=b\;\&\; c':=b'\;\&\; b=_B b'\big)
\end{align*}
and embedding
\begin{align*}
i_{A\cup B}:A\cup B&\hookrightarrow X \\
c&\mapsto
\begin{cases}
i_A(a),\text{ if } c:=a \text{ for }a\in A \\
i_B(b),\text{ if } c:=b \text{ for }b\in B
\end{cases}
\end{align*}
Let the intersection $A\cap B$ be the set $\{(a,b)\in A\times B:\;i_A(a)=_Xi_B(b)\}$ with the embedding
\begin{align*}
i_{A\cap B}:A\cap B&\hookrightarrow X \\
(a,b)&\mapsto i_A(a)
\end{align*}
It is easy to check that $i_{A\cap B}$ and $i_{A\cup B}$ are indeed embeddings, turning $A\cap B$ and $A\cup B$ into subsets of $X$. Later we will see a different way to define union and intersection that will turn out equivalent up to equality of subsets.
\end{definition}

\section{Complemented subsets and partial functions}
So far, we have seen how to talk about sets, functions and subsets in BST. In BCMT we are however primarily concerned with partial functions and complemented subsets, which we want to introduce in this section. The operations on complemented subsets as presented in \cite{BB85} are different from that in \cite{Bishop67} and we will present the former account in order to be able to use them in BCMT. Before we do that we will briefly introduce partial functions.

Let $X$ and $Y$ be sets. A partial function from $X$ to $Y$ is a triple $(A,i_A,f)$ such that $(A,i_A)$ is subset of $X$ and $f\in\mathbb{F}(A,Y)$. We call $A$ the domain of $f$ and write $f:X\rightharpoonup Y$ if the domain is clear from the context. Let $\mathbb{F}^{\rightharpoonup}(X,Y)$ be the totality of partial functions from $X$ to $Y$ with the following equality. We say two partial functions $(A,i_A,f)$ and $(B,i_B,g)$ are equal if there are functions $\varphi\in\mathbb{F}(A,B)$ and $\psi\in\mathbb{F}(B,A)$ such that the following diagrams commute
\[
\begin{tikzcd}
A \arrow[rr, "\varphi", bend left]\arrow[rdd, "i_A", hook, bend right]\arrow[rddd, "f"', bend right] &&B \arrow[ll, "\psi"', bend left]\arrow[ldd, "i_B"', left hook->, bend left]\arrow[lddd, "g", bend left] \\
\\
&X \\
&Y
\end{tikzcd}
\]
In this case we will write $(\varphi,\psi):(A,i_A,f)=_{\mathsmaller{\mathbb{F}^{\rightharpoonup}(X,Y)}}(B,i_B,g)$ or just $(\varphi,\psi):f=_{\mathsmaller{\mathbb{F}^{\rightharpoonup}(X,Y)}}g$ if the domains are clear from the context. It follows immediatelly that in this case we also have $(\varphi,\psi):(A,i_A)\eqsubset(B,i_B)$. Using lemma (\ref{uniquewitness}) we can conclude that the witnesses for the equality of partial functions are unique. Since the domain of a partial functions can in principle be any subset of $X$ the totality of partial functions cannot be a set.

Here we will mostly be concerned with real-valued partial functions, i.e. the case where $Y:=\mathbb{R}$. For the totality of real-valued partial functions on a set $X$ we write $\mathcal{F}(X):=\mathbb{F}^{\rightharpoonup}(X,\mathbb{R})$. We want to define the usual (pointwise) operations on $\mathcal{F}(X)$. Let  $(A,i_A,f)$ and $(B,i_B,g)$ be real-valued partial functions and $\ast$ be any of the operations $+,-,\cdot,\wedge,\vee$ on $\mathbb{R}$ (Here $\wedge,\vee$ denote the minimum and maximum of two reals), then we have a partial function $(A\cap B,i_{A\cap B},f\ast g)$ with
\begin{align*}
f\ast g:A\cap B&\rightarrow \mathbb{R}\\
(a,b)&\mapsto f(a)\ast g(b)
\end{align*}
Also for any $a\in\mathbb{R}$ and any $\phi\in\mathbb{F}(\mathbb{R},\mathbb{R})$ we have partial functions $(A,i_A,a\cdot f)$ and $(A,i_A,\phi\circ f)$ that are obtained similarly by pointwise multiplying with $a$ or composing with $\phi$. 

Since negative definitions are often not very useful in constructive mathematics, we  have to use positive counterparts. Complemented subsets are the positive counterpart to the complement of a subset.
\begin{definition}
Let $X$ be a set and let $\neq_X$ be a binary relation defined on $X$. Then $\neq_X$ is said to be an \emph{inequality} or apartness relation on $X$ if the following conditions are satisfied:
\begin{enumerate}
\item $\forall x,y\in X:\; (x=_Xy\;\&\;x\neq_Xy)\;\Rightarrow\;\bot$
\item $\forall x,y\in X:\; x\neq_Xy\;\Rightarrow\;y\neq_Xx$
\item $\forall x,y\in X:\; x\neq_Xy\;\Rightarrow\;\big(\forall z\in X:\; x\neq_Xz\text{ or }y\neq_Xz\big)$
\end{enumerate}
We say that two subsets $(A,i_A)$ and $(B,i_B)$ of $X$ are apart with respect to $\neq_X$ if
\begin{align*}
\forall a\in A\;\forall b\in B:\;i_A(a)\neq_Xi_B(b)
\end{align*}
In this case we write $A][_{\neq_X}B$ or just $A][B$ if the inequality $\neq_X$ is clear from the context. A \emph{complemented subset} of $X$ with respect to $\neq_X$ is a pair of subsets $\bm A:=(A^1,A^0)$ such that $A^1][_{\neq_X}A^0$.
\end{definition}
The notion of an apartness relation is the positive counterpart of the mere negation of the equality $=_X$. From now on let $X$ be equipped with a fixed inequality $\neq_X$. We denote the totality of complemented subsets by $\mathcal{P}^{][}(X)$. Again this totality is obviously not a set. Still, we have an equality defined on $\mathcal{P}^{][}(X)$ by
\begin{align*}
\bm A\eqcompset\bm B \;:\Leftrightarrow\; A^1\eqsubset B^1 \;\&\; A^0\eqsubset B^0
\end{align*}
For complemented subsets $\bm A:=(A^1,A^0)$ and $\bm B:=(B^1,B^0)$ we write $\bm A<\bm B$ if $A^1\subseteq B^1$ and $B^0\subseteq A^0$. We now want to introduce operations of complented union, intersection and substraction as well as complementation. Let $\bm A:=(A^1,A^0)$ and $\bm B:=(B^1,B^0)$ be complemented subsets and define
\begin{itemize}
\item $\bm A\wedge\bm B:=\big(A^1\cap B^1,\;(A^1\cap B^0)\cup(A^0\cap B^1)\cup(A^0\cap B^0)\big)$
\item $\bm A\vee\bm B:=\big((A^1\cap B^0)\cup(A^0\cap B^1)\cup(A^1\cap B^1),\;A^0\cap B^0\big)$
\item $-\bm A:= (A^0,A^1)$ 
\item $\bm A-\bm B := \bm A\wedge (-\bm B)$
\end{itemize}
We can prove commutativity and  associativity of $\wedge$ and $\vee$ as well as the following (with $\bm C$ being a complemented subset):
\begin{enumerate}
\item $--\bm A\eqcompset \bm A$
\item $-(\bm A \wedge \bm B)\eqcompset (-\bm A)\vee (-\bm  B)$
\item $-(\bm A \vee \bm B)\eqcompset (-\bm A)\wedge (-\bm  B)$
\item $\bm A\wedge(\bm B\vee\bm C)\eqcompset(\bm A\wedge\bm B)\vee(\bm A\wedge\bm C)$
\item $\bm A\vee(\bm B\wedge\bm C)\eqcompset(\bm A\vee\bm B)\wedge(\bm A\vee\bm C)$
\end{enumerate}
The poofs are straightforward though tedious. The definitions of $\wedge$ and $\vee$ look a bit complicated and indeed the corresponding definitions in chapter 3 of \cite{Bishop67} appear simpler with the same algebraic laws still being provable. However, the definitions presented here behave much better with the characteristic functions of complemented subsets. Let $\bm 2:=\{0,1\}$, for a complemented subset $\bm A:=(A^1,A^0)$ we define its characteristic function by
\begin{align*}
\chi_{\bm A}:\;&A^1\cup A^0\rightarrow\bm 2 \\
&x\mapsto\begin{cases}1,\;x\in A^1 \\
0,\;x\in A^0
\end{cases}
\end{align*}
So, if we interpret $\bm 2$ as a subset on $\mathbb{R}$ the characteristic function of a complemented subset is an element of $\mathcal{F}(X)$ and it is easy to check that
\begin{itemize}
\item $\chi_{\bm A\wedge\bm B}\eqpf\chi_{\bm A}\wedge\chi_{\bm B}$
\item $\chi_{\bm A\vee\bm B}\eqpf\chi_{\bm A}\vee\chi_{\bm B}$
\item $\chi_{-\bm A}\eqpf 1-\chi_{\bm A}$
\end{itemize}
where the operations on the characteristic functions are the pointwise minimum and maximum that we defined above and composition with the function $(1-\_):\mathbb{R}\rightarrow\mathbb{R}$. With this the above properties of the complemented operations can also be proven more easily. We conclude the section with a final remark on partial functions and inequalities.

\begin{remark}
We have a canonical inequality on $\mathbb{R}$ that is given by
\begin{align*}
x\neq_{\mathbb{R}}y \;:\Leftrightarrow\;\abs{x-y}>0
\end{align*}
If $X$ is equipped with an inequality $\neq_X$ as well, we take $\mathcal{F}(X)$ to be the totality of strongly extensional partial functions, i.e. for any $(A,i_A,f)$ we assume that
\begin{align*}
\forall a,b\in A:\; f(a)\neq_{\mathbb{R}}f(b) \;\Rightarrow\; i_A(a)\neq_Xi_A(b)
\end{align*}
\end{remark}

\section{On double series of real numbers}\label{reals}
At the end of this section we want to consider some aspects of constructive analysis that will be needed for our discussion of BCMT later. One important point is that converging or Cauchy sequences of reals are taken to be modulated in order to avoid the axiom of countable choice. We will also prove a result that is implicitly used in \cite{BB85} for crucial results of BCMT but doesnt't seem to be explicitly stated in the literature on constructive mathematics. 

We will follow the presentation in \cite{Schwichtenberg2006}. A real number is a pair $x=\big((a_n)_{n\in\mathbb{N}},M)$ consisting of a sequences of rational numbers $(a_n)_{n\in\mathbb{N}}$ and $M:\mathbb{N}\rightarrow\mathbb{N}$ is strictly increasing such that
\begin{align*}
\forall p\in\mathbb{N}\;\forall n,m\geq M(p):\; \abs{a_n - a_m}\leq\frac{1}{2^p}
\end{align*}
Two reals $x=\big((a_n)_{n\in\mathbb{N}},M)$ and $y=\big((b_n)_{n\in\mathbb{N}},N)$ are equal, which we write as $x=_{\mathbb{R}}y$, if
\begin{align*}
\forall p\in\mathbb{N}:\;\abs{a_{M(p+1)}-b_{N(p+1)}}\leq \frac{1}{2^p}
\end{align*}
Note that this is equivalent to
\begin{align*}
\forall p\in\mathbb{N}\;\exists n_0\in\mathbb{N}\;\forall n\geq n_0:\;\abs{a_n-b_n}\leq\frac{1}{2^p}
\end{align*}
Using this second characterisation of equality it is not hard to show that $=_{\mathbb{R}}$ is indeed an equality on the totality $\mathbb{R}$. The usual operations like addition, multiplication and absolute value and the order realtions can be defined, and the usual basic properties, like the triangle inequality, can be derived.

\begin{definition}
Let $(x_n)_{n\in\mathbb{N}}$ be a sequence of reals, $x\in\mathbb{R}$ and $M:\mathbb{N}\rightarrow\mathbb{N}$ strictly increasing. We say that $(x_n)_{n\in\mathbb{N}}$ is a Cauchy-sequence with modulus $M$ if
\begin{align*}
\forall p\in\mathbb{N}\;\forall n,m\geq M(p):\; \abs{x_n - x_m}\leq\frac{1}{2^p}
\end{align*}
Furthermore, we say that $(x_n)_{n\in\mathbb{N}}$ converges to $x$ with modulus $M$ if
\begin{align*}
\forall p\in\mathbb{N}\;\forall n\geq M(p):\; \abs{x_n-x}\leq\frac{1}{2^p}
\end{align*}
\end{definition}
Note, that if we use notation like $\lim_{n\rightarrow\infty}x_n=x$ or say that some sequence or series converges we implicitly assume the existence of a corresponding modulus of convergence. 

Finally, for our discussion we will need some facts about the convergence of double series. Reorderings of infinite series have been studied constructively in \cite{Berger2009} and \cite{Berger2013}, but we are concerned with a slightly different problem. We want to show that for some fixed bijection from $\mathbb{N}$ to $\mathbb{N}\times\mathbb{N}$ we can rearrange a double series into a single one if it converges absolutely. Consider the following enumeration of $\mathbb{N}\times\mathbb{N}$:
\[ 
\begin{matrix}
(1,1)^{\;\bm 1} &(1,2)^{\;\bm 3} &(1,3)^{\;\bm 6} &\iddots\\
(2,1)^{\;\bm 2} &(2,2)^{\;\bm 5} &(2,3)^{\;\bm 9}\\
(3,1)^{\;\bm 4} &(3,2)^{\;\bm 8} \\
(4,1)^{\;\bm 7}
\end{matrix}
\]
Using this pattern as an enumeration of  $\mathbb{N}_0\times\mathbb{N}_0$ starting with zero, we would get a bijection that is quite conveniently expressed by the following function\footnote{This is why this particular bijection is often used to code pairs in computability theory.}
\begin{align*}
c:\;&\mathbb{N}_0\times\mathbb{N}_0\rightarrow\mathbb{N}_0\\
&(x,y)\mapsto\frac{(x+y)(x+y+1)}{2}+y
\end{align*}
That means that our enumeration of $\mathbb{N}\times\mathbb{N}$ corresponds to a bijection $\varphi:\mathbb{N}\rightarrow\mathbb{N}\times\mathbb{N}$ with inverse
\begin{align*}
\varphi^{-1}(n,m)=c(n-1,m-1)+1
\end{align*}
Using this particular bijection we can state the following lemma.

\begin{lemma}\label{doubleseries}
Let $\big(x_{nk}\big)_{n,k\in\mathbb{N}}$ be a sequence of sequences in $\mathbb{R}$, i.e. an element of $\mathbb{F}(\mathbb{N},\mathbb{F}(\mathbb{N},\mathbb{R}))$, and let $\big(y_n\big)_{n\in\mathbb{N}}$ be the sequence defined by
\begin{align*}
y_m:=x_{\mathsf{pr}_1\big(\varphi(m)\big)\;\mathsf{pr}_2\big(\varphi(m)\big)}
\end{align*}
i.e. $x_{nk}=y_{\varphi^{-1}(n,k)}$. Then $\sum_n\sum_k x_{nk}$ converges absolutely if and only if $\sum_m y_m$ converges absolutely, in which case we have that
\begin{align*}
\sum_{n=1}^\infty\sum_{k=1}^\infty x_{nk}=\sum_{m=1}^\infty y_m
\end{align*}
\end{lemma}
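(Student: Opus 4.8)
The plan is to reduce everything to a statement about series of nonnegative reals, namely the absolute values, and then transfer convergence and limits back to the signed series using tail estimates controlled by the absolute series. The combinatorial heart of the argument is that the initial segments of the enumeration $\varphi$ and the square rectangles $[1,N]\times[1,N]$ are cofinal in one another, in an explicit and monotone way. First I would use the explicit formula for $c$ to produce two strictly increasing functions $\sigma,\tau:\mathbb{N}\rightarrow\mathbb{N}$ with $\{\varphi(1),\ldots,\varphi(m)\}\subseteq[1,\sigma(m)]\times[1,\sigma(m)]$ and $[1,N]\times[1,N]\subseteq\{\varphi(1),\ldots,\varphi(\tau(N))\}$. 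Concretely $\sigma(m)=m$ works, since $\varphi^{-1}(n,k)=c(n-1,k-1)+1$ and $c(x,y)\geq x,\,c(x,y)\geq y$; and $\tau(N)=c(N-1,N-1)+1$ works, since $c$ is nondecreasing in each argument. Having these explicit, with no appeal to choice, is what keeps the argument predicative.

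Next I would treat the case of nonnegative terms. Writing $T_m=\sum_{j=1}^m x_{\varphi(j)}$ for the linear partial sums and $P_N=\sum_{n=1}^N\sum_{k=1}^N x_{nk}$ for the diagonal rectangular partial sums, both sequences are nondecreasing, and the cofinality from the first step gives the mutual domination $T_m\leq P_{\sigma(m)}$ and $P_N\leq T_{\tau(N)}$ (each term on the smaller side occurs on the larger, and all terms are $\geq 0$). From this the two sequences have the same limiting behaviour, and, crucially, a modulus for one yields a modulus for the other by composition with $\sigma$ or $\tau$: if $\sum_m x_{\varphi(m)}$ converges to $B$ with modulus $M$, then $T_m\leq B$ forces $P_N\leq T_{\tau(N)}\leq B$, and for $N\geq\sigma(M(p))=M(p)$ one has $0\leq B-P_N\leq B-T_{M(p)}\leq 2^{-p}$, the converse direction being symmetric. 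Applying this to the nonnegative sequences $\abs{x_{nk}}$ and $\abs{y_m}$ (recall $y_m=x_{\varphi(m)}$) yields the equivalence of absolute convergence in both directions, the equality $\sum_n\sum_k\abs{x_{nk}}=\sum_m\abs{y_m}$, and a modulus for the convergence of the diagonal absolute sums $R_{N,N}:=\sum_{n=1}^N\sum_{k=1}^N\abs{x_{nk}}$ to their common value $A:=\sum_n\sum_k\abs{x_{nk}}$; transferring the linear modulus in this way also supplies the inner and outer moduli required by the definition of absolute convergence of the double series.

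Assuming now absolute convergence, I would show that the iterated signed sum equals the limit of the diagonal rectangular sums. Splitting each inner series at index $N$ gives $\sum_{n=1}^N\big(\sum_{k=1}^\infty x_{nk}\big)=P_N+E_N$ with $E_N=\sum_{n=1}^N\sum_{k>N}x_{nk}$, and the tail estimate $\abs{E_N}\leq\sum_{n\leq N}\sum_{k>N}\abs{x_{nk}}\leq A-R_{N,N}$, which tends to $0$ with the modulus from the previous step. Hence $\sum_n\sum_k x_{nk}=\lim_N P_N$ for the signed sums, with an explicit modulus assembled from the modulus of $\sum_n\big(\sum_k x_{nk}\big)$ and that of $R_{N,N}\to A$.

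Finally I would compare the signed diagonal sums $P_N$ with the signed linear partial sums $t_m=\sum_{j=1}^m y_j$. Given $p$, choose $m_0=M(p)$ from the modulus of $\sum_m\abs{y_m}$, so the absolute tail is $\leq 2^{-p}$, and take $N\geq\sigma(m_0)$ so that $\{\varphi(1),\ldots,\varphi(m_0)\}\subseteq[1,N]\times[1,N]$. Then $P_N-t_{m_0}$ is a signed sum over $[1,N]\times[1,N]\setminus\{\varphi(1),\ldots,\varphi(m_0)\}$, each term being some $y_j$ with $j>m_0$, whence $\abs{P_N-t_{m_0}}\leq\sum_{m>m_0}\abs{y_m}\leq 2^{-p}$. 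Combining this with $P_N\to\sum_n\sum_k x_{nk}$ and $t_m\to\sum_m y_m$ and letting $p$ grow forces the two limits to coincide. The main obstacle, and where the real work lies, is doing all of this with explicit moduli rather than mere existence of limits: constructively one cannot invoke ``bounded monotone sequences converge,'' so every convergence must be fed a modulus and every comparison must return one, which is precisely why the explicit monotone cofinality functions $\sigma,\tau$ are indispensable. The delicate bookkeeping is keeping track of which terms lie in a rectangle versus an initial segment and checking that each error term is genuinely bounded by an absolute tail.
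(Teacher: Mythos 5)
Your proposal is correct, and it organizes the argument differently from the paper. You factor everything through the square rectangular partial sums $P_N=\sum_{n=1}^N\sum_{k=1}^N x_{nk}$ and two explicit monotone cofinality functions $\sigma,\tau$ relating initial segments of the enumeration to squares; the signed case is then handled by tail estimates $\abs{E_N}\leq A-R_{N,N}$ and $\abs{P_N-t_{m_0}}\leq\sum_{m>m_0}\abs{y_m}$. The paper instead works directly with staircase-shaped partial sums $\sum_{n=1}^{M(p+1)}\sum_{k=1}^{M_n(M(p+1)+p+1)}x_{nk}$ whose shape is dictated by the given moduli, compares these to the linear partial sums via a remainder term $c(N,p)$, and in the signed case controls $\abs{c(N,p)}$ by a tail of $\sum_m\abs{y_m}$; its converse direction uses exactly your observations $\varphi^{-1}(n,k)\geq k$ and $\varphi^{-1}(n,k)\geq n$ to bound row and column tails. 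Your route buys a cleaner separation of the combinatorial content (cofinality of squares and initial segments) from the analysis, and makes the final limit comparison more transparent; the paper's route avoids introducing the intermediate object $R_{N,N}$ at the cost of messier index bookkeeping. One step in your write-up is underdescribed: passing from the hypothesis of the forward direction (moduli $\tilde M_n$ for the inner absolute series and $\tilde M$ for the outer one) to a modulus for $R_{N,N}\to A$ is \emph{not} mere composition with $\sigma$ or $\tau$, because the inner moduli vary with the row. You still need the splitting estimate that is the computational core of the paper's proof: bound the rows beyond $N_0:=\tilde M(p+1)$ by the outer tail, and for the finitely many rows $n\leq N_0$ push $N$ past $\tilde M_n(q)$ with $q$ chosen so that $N_0$ errors of size $2^{-q}$ sum to at most $2^{-(p+1)}$ (the paper's choice $q=M(p+1)+p+1$ does this). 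With that estimate inserted, your proof is complete and fully modulated.
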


\begin{proof}
First assume that $\sum_n\sum_k \abs{x_{nk}}$ exists and for the moment let us also assume that $x_{nk}\geq 0$ for all $n,k\in\mathbb{N}$. Assume that $\big(\sum_{k=1}^Nx_{nk}\big)_{N\in\mathbb{N}}$ converges to $\ell_n:=\sum_k x_{nk}$ with modulus $M_n$ and that $\big(\sum_{n=1}^N\sum_k x_{nk}\big)_{N\in\mathbb{N}}$ converges to $\ell:=\sum_n\sum_k x_{nk}$ with modulus $M$. We have to construct a modulus $M':\mathbb{N}\rightarrow\mathbb{N}$ s.t. for $p\in\mathbb{N}$ and $N\geq M'(p)$ we have 
\begin{align*}
0\leq\ell-\sum_{m=1}^Ny_m\leq\frac{1}{2^p}
\end{align*}
First, consider $N\geq M(p+1)$, then
\begin{align*}
0\leq \ell-\sum_{n=1}^N\ell_n\leq\frac{1}{2^{(p+1)}}
\end{align*}
Now, consider $N\geq M_n(M(p+1)+p+1)$, then
\begin{align*}
0\leq\ell_n-\sum_{k=1}^{N}x_{nk}\leq\frac{1}{2^{M(p+1)}}\frac{1}{2^{(p+1)}}\leq\frac{1}{M(p+1)}\frac{1}{2^{(p+1)}}
\end{align*}
Hence
\begin{align*}
0&\leq\ell- \sum_{n=1}^{M(p+1)}\;\;\sum_{k=1}^{M_n(M(p+1)+p+1)} x_{nk}\\[1em]
&=\ell-\sum_{n=1}^{M(p+1)}\ell_n+\sum_{n=1}^{M(p+1)}\Big(\ell_n-\sum_{k=1}^{M_n(M(p+1)+p+1)} x_{nk}\Big)\\ &\leq\frac{1}{2^p}
\end{align*}
Observe that by our choice of $\varphi$ for any $N\in\mathbb{N}$ and $m,n\leq N$ we have that $\varphi^{-1}(m,n)\leq\varphi^{-1}(N,N)$. If we define $M_{\varphi}:\mathbb{N}\rightarrow\mathbb{N}$ by $M_{\varphi}(N):=\varphi^{-1}(N,N)$, then $M_{\varphi}$ is strictly increasing. We can now define $M':\mathbb{N}\rightarrow\mathbb{N}$ by
\begin{align*}
M'(p):=M_{\varphi}\Big( M(p+1)\vee\big(\bigvee_{n=1}^{M(p+1)}M_n(M(p+1)+p+1)\big)\Big)
\end{align*}
and for $p\in\mathbb{N}$ and $N\geq M'(p)$ we have that
\begin{align*}
\sum_{m=1}^Ny_m= \sum_{n=1}^{M(p+1)}\;\;\sum_{k=1}^{M_n(M(p+1)+p+1)} x_{nk}+c(N,p)
\end{align*}
where $c(N,p)\geq 0$ is the finite sum of the $x_{nk}$ s.t. $n>M(p+1)$ or $k>M_n(M(p+1)+p+1)$ but $\varphi^{-1}(n,k)\leq N$. It follows that
\begin{align*}
0\leq\ell-\sum_{m=1}^{N}y_m\leq\frac{1}{2^p}
\end{align*}

Now for the general case. Assume that $\big(\sum_{k=1}^Nx_{nk}\big)_{N\in\mathbb{N}}$ converges to $\ell_n:=\sum_k x_{nk}$ with modulus $M_n$ and that $\big(\sum_{n=1}^N\sum_k x_{nk}\big)_{N\in\mathbb{N}}$ converges to $\ell:=\sum_n\sum_k x_{nk}$ with modulus $M$. Furthermore, assume that $\big(\sum_{k=1}^N\abs{x_{nk}}\big)_{N\in\mathbb{N}}$ converges to $\tilde{\ell}_n:=\sum_k \abs{x_{nk}}$ with modulus $\tilde{M}_n$ and that $\big(\sum_{n=1}^N\sum_k \abs{x_{nk}}\big)_{N\in\mathbb{N}}$ converges to $\tilde{\ell}:=\sum_n\sum_k \abs{x_{nk}}$ with modulus $\tilde{M}$. We have already shown that $\sum_m\abs{y_m}=\tilde{\ell}$ and may assume that this series converges with some modulus $\tilde{M}'$. Without loss of generality we can also assume that $M\geq \tilde{M}'$ because otherwise we can replace $M$ by $M\vee \tilde{M}'$. Again, we get for $p\in\mathbb{N}$ that
\begin{align*}
\abs*{\ell- \sum_{n=1}^{M(p+1)}\;\;\sum_{k=1}^{M_n(M(p+1)+p+1)} x_{nk}}\leq\frac{1}{2^p}
\end{align*}
Thus, if we define $M':\mathbb{N}\rightarrow\mathbb{N}$ by
\begin{align*}
M'(p):=M_{\varphi}\Big( M(p+2)\vee\big(\bigvee_{n=1}^{M(p+2)}M_n(M(p+2)+p+2)\big)\Big)
\end{align*}
and take $N\geq M'(p)$ we get
\begin{align*}
\abs*{\ell-\sum_{m=1}^Ny_m}\leq\frac{1}{2^{(p+1)}}+\abs{c(N,p)}
\end{align*}
and
\begin{align*}
\abs{c(N,p)}\leq\sum_{\substack{(n,k)\text{ s.t. }\varphi^{-1}(n,k)\leq N,\\ 
n>M(p+2)\text{ or }k>M_n(M(p+2)+p+2)}}\abs{x_{nk}}\leq\sum_{m=M(p+2)}^\infty\abs{y_m}\leq\sum_{m=\tilde{M}'(p+2)}^\infty\abs{y_m}\leq\frac{1}{2^{(p+1)}}
\end{align*}
which finishes the first part of the proof. For the converse direction assume that $\tilde{\ell}:=\sum_m\abs{y_m}$ exists and that this series converges with modulus $\tilde{M}$ and let $\ell:=\sum_my_m$ and assume that this series converges with modulus $M$. 

We first have to show that for any $n\in\mathbb{N}$, $\tilde{\ell}_n:=\sum_k\abs{x_{nk}}$ exists, so fix $n\in\mathbb{N}$ and let $N_0\leq N_1$, then
\begin{align*}
\sum_{k=N_0}^{N_1}\abs{x_{nk}}=\sum_{m=N_0}^{N_1}\abs{y_{\varphi^{-1}(n,m)}}\leq\sum_{m=N_0}^\infty\abs{y_m}
\end{align*}
since $\varphi^{-1}(n,m)\geq m$ for all $m\in\mathbb{N}$. It follows that $\big(\sum_{k=1}^N\abs{x_{nk}}\big)_{N\in\mathbb{N}}$ is a Cauchy-sequence and hence we may assume that  $\tilde{\ell}_n:=\sum_k\abs{x_{nk}}$ exists and the series converges with modulus $\tilde{M}$ and that  $\ell_n:=\sum_k{x_{nk}}$ exists and the series converges with modulus $M$. We next have to show that $\sum_n\tilde{\ell}_n$ exists, so let $N_0\leq N_1$, then again
\begin{align*}
\sum_{n=N_0}^{N_1}\tilde{\ell}_n\leq\sum_{m=N_0}^\infty\abs{y_m}
\end{align*}
which shows that $\big(\sum_{n=1}^N\tilde{\ell}_n\big)_{N\in\mathbb{N}}$ is a Cauchy-sequence and it remains to check that $\sum_n\ell_n=\ell$. So fix $N\in\mathbb{N}$, let $p\in\mathbb{N}$ and define $N_{n,p}:=M_n(N+p)$ for $1\leq n\leq N$. Note that $N_{n,p}\geq N$ for all $1\leq n\leq N$ and
\begin{align*}
\abs*{\ell-\sum_{n=1}^N\ell_n}\leq\underbrace{\sum_{n=1}^N\;\;\abs*{\sum_{k=1}^{N_{n,p}}x_{nk}-\ell_n}}_{\leq2^{-p}}+\underbrace{\abs*{\sum_{n=1}^N\sum_{k=1}^{N_{n,p}}x_{nk}-\ell}}_{(*)}
\end{align*}
and in particular
\begin{align*}
(*)\leq\underbrace{\sum_{n=1}^N\sum_{k=1}^{N_{n,p}}\abs{x_{nk}-y_{\varphi^{-1}(n,k)}}}_{=0}+\;\;\sum_{\substack{(n,k)\text{ s.t. } n>N \\ \text{or }k>N_{n,p}\geq N}}\abs{y_{\varphi^{-1}(n,k)}}\leq\sum_{m=N}^\infty\abs{y_m}
\end{align*}
Since $p\in\mathbb{N}$ was chosen arbitrarily we get that $\abs*{\ell-\sum_{n=1}^N\ell_n}\leq\sum_{m=N}^\infty\abs{y_m}$, which finishes the proof.
\end{proof}

\afterpage{\blankpage}
\chapter{Set indexed families}
We have seen that a lot of important notions of BST and BCMT give rise to totalities that, although equipped with a canonical equality, are not sets or can at least not be treated as such from a predicative point of view. However, we are still able to speak about these totalities by invoking so-called \emph{set-indexed families}. We introduce the various kinds of set-indexed families in this section.
\section{Families of sets}
As a first example of a totality that is not a set we considered the universe $\vnull$. It is thus natural to introduce families of sets before introducing families of subsets, complemented subsets and partial functions. However, when applying set-indexed families to BCMT we won't be concerned families of sets. Therefore, the account given here (following \cite{Petrakis2019}) shall be rather brief and is just included for the sake of completeness.
\begin{definition}\label{famset}
Let $I$ be a set, an \emph{$I$-family of sets} is a pair $\Lambda:=(\lambda_0,\lambda_1)$, where $\lambda_0:I\rightsquigarrow\mathbb{V}_0$ and $\lambda_1:\bigcurlywedge_{i,j\in D(I)}\mathbb{F}(\lzi,\lzj)$ is a dependent assignment routine such that for every $(i,j)\in D(I)$ and $\lambda_1(i,j):=\lambda_{ij}$ we have that $\lambda_{ii}:=\id_{\lambda_0(i)}$, and for every $i,j,k\in I$, satisfying $i=_I j$ and $j=_I k$, the following diagram commutes
\[
\begin{tikzcd}
\lzi\arrow[rd, "\lambda_{ik}"]\arrow[d,"\lambda_{ij}"'] \\
\lzj\arrow[r, "\lambda_{jk}"'] & \lzk
\end{tikzcd}
\]
We call $I$ the \emph{index set} of the family $\Lambda$, the function $\lambda_{ij}$ the \emph{transport} function from $\lambda_0(i)$ to $\lambda_0(j)$, and the dependent assignment routine $\lambda_1$ the \emph{modulus of function-likeness} of $\lambda_0$. We say that $\Lambda$ is an $I$-\emph{set} of sets if\footnote{Note that the left to right direction always holds, since for any family of sets we have $i=_I j\Rightarrow(\lambda_{ij},\lambda_{ji}):\lzi=_{\vnull}\lzj$.}
\begin{align*}
\forall i,j\in I:\; i=_I j\;\Leftrightarrow\;\lzi=_{\vnull}\lzj
\end{align*}
\end{definition}

\begin{definition}
Let $I$ be a set and $\Lambda:=(\lambda_0,\lambda_1)$ and $M:=(\mu_0,\mu_1)$ be $I$-families of sets. A \emph{family map} from $\Lambda$ to $M$ is a dependent assignement routine
\begin{align*}
\Psi:\bigcurlywedge_{i\in I}\mathbb{F}\big(\lzi,\mu_0(i)\big)
\end{align*}
such that for every $i=_I j$ the following diagram commutes
\[
\begin{tikzcd}
\lzi \arrow[d, "\Psi_i"']\arrow[r, "\lambda_{ij}"] &\lzj\arrow[d, "\Psi_j"] \\
\mu_0(i)\arrow[r, "\mu_{ij}"'] &\mu_0(j)
\end{tikzcd}
\]
In this case we write $\Psi:\Lambda\Rightarrow M$. We denote by $\mathtt{Map}_I(\Lambda,M)$ the totality of family maps from $\Lambda$ to $M$, which is equipped with the equality
\begin{align*}
\Psi=_{\mathsmaller{\mathtt{Map}_I(\Lambda,M)}}\Xi \;:\Leftrightarrow\;\forall i\in I:\;\Psi_i=_{\mathsmaller{\mathbb{F}(\lzi,\mu_0(i))}}\Xi_i
\end{align*}
If $N:=(\nu_0,\nu_1)$ is an $I$-family as well, then for $\Psi:\Lambda\Rightarrow M$ and $\Xi:M\Rightarrow N$ the composition $\Psi\circ\Xi:\Lambda\Rightarrow N$ is defined as $(\Psi\circ\Xi)_i:=\Psi_i\circ\Xi_i$. The identity family-map from $\Lambda$ to $\Lambda$ is the dependent assignment routine $\text{Id}_{\Lambda}:\bigcurlywedge_{i\in I}\mathbb{F}(\lzi,\lzi)$ defined by $\big(\text{Id}_{\Lambda}\big)_i:=\id_{\lzi}$.
\end{definition}

Over a family of sets we can define the exterior union and the dependent product, but since we won't use these notions in our discussion of BCMT, we just state the definitions.

\begin{definition}\label{extunion}
Let $\Lambda:=(\lambda_0,\lambda_1)$ be an $I$-family of sets. The \emph{exterior union}, or \emph{disjoint union} $\sum_{i\in I}\lambda_0(i)$ of $\Lambda$ is defined by
\begin{align*}
w\in\sum_{i\in I}\lambda_0(i)\; &:\Leftrightarrow\; \exists i\in I\;\exists x\in\lambda_0(i)\text{ s.t. }w:=(i,x) \\
(i,x)=_{\sum_{i\in I}\lambda_0(i)}(j,y) \; &:\Leftrightarrow\;i=_I j \;\text{and}\; \lambda_{ij}(x)=_{\lambda_0(j)}y
\end{align*}
\end{definition}

\begin{definition}\label{depproduct}
Let $\Lambda:=(\lambda_0,\lambda_1)$ be an $I$-family of sets. The \emph{dependent product} $\prod_{i\in I}\lzi$ is defined by
\begin{align*}
\Phi\in \prod_{i\in I}\lzi \;&:\Leftrightarrow\; \Phi:\bigcurlywedge_{i\in I}\lzi \text{ s.t. }
\forall (i,j)\in D(I):\;\lambda_{ij}(\Phi_i)=_{\lzj}\Phi_j \\
\Phi=_{\prod_{i\in I}\lzi}\Psi \;&:\Leftrightarrow\; \forall i\in I:\; \Phi_i=_{\lzi}\Psi_i
\end{align*}
\end{definition}

\section{Families of subsets}
We next introduce families of subsets that we will use a lot in our discussion of BCMT.
\begin{definition}\label{famsubset}
Let $X$ and $I$ be sets. A \emph{family of subsets} indexed by $I$ is a triple $\lambda:=(\lambda_0,\mathcal{E},\lambda_1)$, where $\lambda_0:I\rightsquigarrow\vnull$, and $\mathcal{E}:\bigcurlywedge_{i\in I} \mathbb{F}(\lzi,X)$, such that, for every $i\in I$, we have that $\mathcal{E}(i):=\varepsilon_i$ is an embedding of $\lambda_0(i)$ into $X$ (i.e. $(\lambda_0(i),\varepsilon_i)$ is a subset of $X$). Moreover, $\lambda_1:\bigcurlywedge_{(i,j)\in D(I)}\mathbb{F}(\lzi,\lzj)$ is called the \emph{modulus of function-likeness} of $\lambda_0$, and for every $i\in I$ it satisfies $\lambda_{ii}:=\id_{\lambda_0(i)}$, while for every $(i,j)\in D(I)$ it satisfies $(\lambda_{ij},\lambda_{ji}):\lambda_0(i)=_{\mathcal{P}(X)}\lambda_0(j)$, i.e. the following inner diagrams commute
\[
\begin{tikzcd}
\lambda_0(i) \arrow[rr, "\lambda_{ij}", bend left]\arrow[rdd, "\varepsilon_i"', hook, bend right] &&\lambda_0(j) \arrow[ll, "\lambda_{ji}", bend left]\arrow[ldd, "\varepsilon_j",left hook->, bend left] \\
\\
&X
\end{tikzcd}
\]
We say that $\Lambda$ is an $I$-\emph{set} of subsets if
\begin{align*}
\forall i,j\in I:\; i=_I j\;\Leftrightarrow\;\lzi\eqsubset\lzj
\end{align*}
Let $\lambda:=(\lambda_0,\mathcal{E},\lambda_1)$ and $\mu:=(\mu_0,E,\mu_1)$ be $I$-families of subsets of $X$. A \emph{family of subsets-map} is a dependent assignment routine $\Psi:\bigcurlywedge_{i\in I}\mathbb{F}\big(\lzi,\mu_0(i)\big)$ such that for every $i\in I$ the following diagram commutes
\[
\begin{tikzcd}
\lzi \arrow[rr, "\Psi_i"]\arrow[rd, "\varepsilon_i"', hook] &&\mu_0(i)\arrow[ld, "e_i", left hook->] \\
&X
\end{tikzcd}
\]
i.e. $\Psi_i:\lzi\subseteq\mu_0(i)$ for all $i\in I$. In this case we write $\Psi:\lambda\Rightarrow_X \mu$. We denote by $\mathtt{Map}_I^X(\lambda,\mu)$ the totality of family maps from $\lambda$ to $\mu$, which is equipped with the equality
\begin{align*}
\Psi=_{\mathsmaller{\mathtt{Map}_I^X(\lambda,\mu)}}\Xi \;:\Leftrightarrow\;\forall i\in I:\;\Psi_i=_{\mathsmaller{\mathbb{F}(\lzi,\mu_0(i))}}\Xi_i
\end{align*}
If $\nu:=(\nu_0,E',\nu_1)$ is an $I$-family of subsets of $X$ as well, then for $\Psi:\lambda\Rightarrow_X \mu$ and $\Xi:\mu\Rightarrow_X \nu$ the composition $\Psi\circ\Xi:\lambda\Rightarrow_X \nu$ is defined as $(\Psi\circ\Xi)_i:=\Psi_i\circ\Theta_i$. The identity family of subsets-map from $\lambda$ to $\lambda$ is the dependent assignment routine $\text{Id}_{\lambda}:\bigcurlywedge_{i\in I}\mathbb{F}(\lzi,\lzi)$ defined by $\big(\text{Id}_{\lambda}\big)_i:=\id_{\lzi}$.
\end{definition}

\begin{remark}
It is easy to check that for any $I$-family of subsets of $X$, $\lambda:=(\lambda_0,\mathcal{E},\lambda_1)$, we get a family of sets $\Lambda_{\lambda}:=(\lambda_0,\lambda_1)$. From the definition we get that for $i=_Ij$ and $j=_I k$ the following diagrams commute
\[
\begin{tikzcd}
\lzj\arrow[r, "\lambda_{jk}"]\arrow[rd, "\varepsilon_j", hook] &\lzk\arrow[d, "\varepsilon_k", hook]\\
\lzi\arrow[u, "\lambda_{ij}"]\arrow[r, "\varepsilon_i", hook] &X
\end{tikzcd}\quad\quad\quad
\begin{tikzcd}
\lzj\arrow[d, "\lambda_{ji}"]\arrow[rd, "\varepsilon_j", hook] &\lzk\arrow[d, "\varepsilon_k", hook]\arrow[l, "\lambda_{kj}"']\\
\lzi\arrow[r, "\varepsilon_i", hook] &X
\end{tikzcd}
\]
It follows that $(\lambda_{ij}\circ\lambda_{jk},\lambda_{kj}\circ\lambda_{ji}):\lzi\eqsubset\lzk$. But since we have $i=_I k$ we get that $(\lambda_{ik}\lambda_{ki}):\lzi\eqsubset\lzk$ holds as well and we get from lemma (\ref{uniquewitness}) that in particular $\lambda_{ij}\circ\lambda_{jk}=_{\mathsmaller{\mathbb{F}(\lzi,\lzk)}}\lambda_{ik}$, i.e. the following diagram commutes
\[
\begin{tikzcd}
\lzi\arrow[rd, "\lambda_{ik}"]\arrow[d,"\lambda_{ij}"'] \\
\lzj\arrow[r, "\lambda_{jk}"'] & \lzk
\end{tikzcd}
\]
Furthermore, a family of subsets-map $\Psi:\lambda\Rightarrow_X \mu$ is also a family map $\Psi:\Lambda_{\lambda}\Rightarrow M_{\mu}$, since for $i=_I j$ the following inner diagrams commute
\[
\begin{tikzcd}
\lzi\arrow[rr, "\lambda_{ij}"]\arrow[dd, "\Psi_i"']\arrow[rd, "\varepsilon_i", hook] &&\lzj\arrow[dd, "\Psi_j"]\arrow[ld, "\varepsilon_j"', left hook->] \\
&X \\
\mu_0(i)\arrow[rr, "\mu_{ij}"']\arrow[ru, "e_i", hook] &&\mu_0(j)\arrow[lu, "e_j"', left hook->]
\end{tikzcd}
\]
For $x\in\lzi$ we thus get
\begin{align*}
\big(e_j\circ\mu_{ij}\circ\Psi_i\big)(x)&=_X\big(e_i\circ\Psi_i\big)(x)\\
&=_X\varepsilon_i(x)\\
&=_X\big(\varepsilon_j\circ\lambda_{ij}\big)(x)\\
&=_X\big(e_j\circ\Psi_j\circ\lambda_{ij}\big)(x)
\end{align*}
and since $e_j$ is an embedding we get that $\big(\mu_{ij}\circ\Psi_i\big)(x)=_{\mu_0(j)}\big(\Psi_j\circ\lambda_{ij}\big)(x)$.
\end{remark}
Next, we want to introduce the notion of a subfamily that will play an important role in redefining the notions of measure and integration space in BCMT.

\begin{definition}
Let $\lambda:=(\lambda_0,\mathcal{E},\lambda_1)$ be an $I$-family of subsets of $X$ and $J$ be a set with a function $h:J\rightarrow I$. Then by $(\lambda\circ h):=(\lambda_0\circ h, \mathcal{E}\circ h,\lambda_1\circ h)$ we denote the $J$ family of subsets that is given by composing all (dependent) assignment routines with $h$. Now let $\mu:=(\mu_0,E,\mu_1)$ be a $J$-family of subsets of $X$, we say that $\mu$ is a \emph{subfamily} of $\lambda$ via $h$ if there are family of subset-maps $\Psi:\mu\Rightarrow_X (\lambda\circ h)$ and $\Psi^{-1}:(\lambda\circ h)\Rightarrow_X \mu$ such that $\Psi\circ\Psi^{-1}=_{\mathsmaller{\mathtt{Map}_J^X(\mu,\mu)}}\text{Id}_{\mu}$ and $\Psi^{-1}\circ\Psi=_{\mathsmaller{\mathtt{Map}_J^X((\lambda\circ h),(\lambda\circ h))}}\text{Id}_{(\lambda\circ h)}$, i.e. for each $j\in J$ the following diagrams commute
\[
\begin{tikzcd}
\mu_0(j)\arrow[rd, "e_j"', bend right, hook]\arrow[r, "\Psi_j"']\arrow[rr, "\id_{\mu_0(j)}", bend left] &\lambda_0\big(h(j)\big)\arrow[r, "\Psi_j^{-1}"']\arrow[d, "\varepsilon_{h(j)}", hook] &\mu_0(j)\arrow[ld, "e_j", bend left, left hook->]\\
&X
\end{tikzcd}\quad\quad
\begin{tikzcd}
\lambda_0\big(h(j)\big)\arrow[rd, "\varepsilon_{h(j)}"', bend right, hook]\arrow[r, "\Psi_j^{-1}"']\arrow[rr, "\id_{\lambda_0(h(j))}", bend left] &\mu_0(j)\arrow[d, "e_j", hook]\arrow[r, "\Psi_j"'] &\lambda_0\big(h(j)\big)\arrow[ld, "\varepsilon_{h(j)}", bend left, left hook->]\\
&X
\end{tikzcd}
\]
\end{definition}

\begin{remark}\label{subfamembedding}
If in the above setting $\mu$ is a $J$-set of subsets, then $h:J\rightarrow I$ is an embedding. To see this, assume that we have $j,j'\in J$ s.t. $h(j)=_I h(j')$. Then the following inner diagrams commute
\[
\begin{tikzcd}
&\lambda_0\big(h(j)\big)\arrow[rr, "\lambda_{h(j)h(j')}"]\arrow[rd, "\varepsilon_{h(j)}", hook] &&\lambda_0\big(h(j')\big)\arrow[rd, "\Psi_{j'}^{-1}"]\arrow[ld, "\varepsilon_{h(j')}"', left hook->]\\
\mu_0(j)\arrow[ru, "\Psi_j"]\arrow[rr, "e_j", hook] &&X &&\mu_0(j')\arrow[ll, "e_{j'}"', left hook->]\arrow[ld, "\Psi_{j'}"]\\
&\lambda_0\big(h(j)\big)\arrow[ru, "\varepsilon_{h(j)}"', hook]\arrow[lu, "\Psi_{j}^{-1}"] &&\lambda_0\big(h(j')\big)\arrow[lu, "\varepsilon_{h(j')}", left hook->]\arrow[ll, "\lambda_{h(j')h(j)}"]
\end{tikzcd}
\]
It follows that $\mu_0(j)\eqsubset\mu_0(j')$ and thus $j=_J j'$ since $\mu$ is a $J$-set of subsets.
\end{remark}

Finally we can introduce the appropriate notions union and intersection of a family of subsets, corresponding to the exterior union and dependent product for families of sets.

\begin{definition}
Let $\lambda:=(\lambda_0,\mathcal{E},\lambda_1)$ be an $I$-family of subsets of $X$. The \emph{interior union} of $\lambda$ is the totality $\bigcup_{i\in I}\lambda_0(i)$ defined by
\begin{align*}
z\in\bigcup_{i\in I}\lambda_0(i) \; :\Leftrightarrow\;\exists i\in I\;\exists x\in\lambda_0(i)\text{ s.t. }z:=(i,x)
\end{align*}
Let the assignment routine $\varepsilon:\bigcup_{i\in I}\lambda_0(i)\rightsquigarrow X$ be defined by $\varepsilon(i,x):=\varepsilon_i(x)$. The equality on $\bigcup_{i\in I}\lambda_0(i)$ is defined by
\begin{align*}
(i,x)=_{\bigcup_{i\in I}\lambda_0(i)}(j,y) \; :\Leftrightarrow\;\varepsilon(i,x)=_X\varepsilon(j,y)
\end{align*}
Then $\varepsilon$ is an embedding, turning $\bigcup_{i\in I}\lambda_0(i)$ into a subset of $X$.
The \emph{intersection} of $\lambda$ is the totality $\bigcap_{i\in I}\lambda_0(i)$ defined by
\begin{align*}
\Psi\in \bigcap_{i\in I}\lzi \;&:\Leftrightarrow\; \Psi:\bigcurlywedge_{i\in I}\lzi \text{ s.t. }
\forall i,j\in I:\;\varepsilon_i(\Psi_i)=_X \varepsilon_j(\Psi_j) \\
\Phi=_{\bigcap_{i\in I}\lzi}\Psi \;&:\Leftrightarrow\; \forall i\in I:\; \Phi_i=_{\lzi}\Psi_i
\end{align*}
For $i_0\in I$ the assignment routine
\begin{align*}
\epsilon:\bigcap_{i\in I}\lzi &\hookrightarrow X\\
\Psi &\mapsto \varepsilon_{i_0}(\Psi_{i_0})
\end{align*}
is an embedding (and as a function in $\mathbb{F}\big(\bigcap_{i\in I}\lzi, X\big)$ independent of $i_0$, turning $\bigcap_{i\in I}\lzi$ into a subset of $X$.
\end{definition}

\begin{example}
Let $(A,i_A)$ and $(B,i_B)$ be subsets of $X$ let $\lambda:=(\lambda_0,\mathcal{E},\lambda_1)$ be the $\bm 2$-family of subsets given by 
\begin{itemize}
\item $\lambda_0(0):=A$ and $\lambda_0(1):=B$
\item $\varepsilon_0:=i_A$ and $\varepsilon_1:=i_B$
\item $\lambda_{00}:=\id_A$ and $\lambda_{11}:=\id_B$
\end{itemize}
then it is easy to verify
\begin{align*}
\bigcap_{i\in \bm 2}\lzi\eqsubset A\cap B \quad\text{ and }\quad \bigcup_{i\in\bm 2}\lzi\eqsubset A\cup B
\end{align*}
\end{example}

\begin{remark}
Talking about intersections and unions of subsets includes a lot of data which we will ignore in large parts later in our discussion of BCMT. In particular we will not write elements of unions as pairs but rather identify them with their image in $X$. Similarly, we won't write elements of intersections as dependent assignment routines but treat them like elements of $X$.
\end{remark}

\section{Families of complemented subsets}
We now turn to set-indexed families of complemented subsets. Just as a complemented subset consists of two subsets that are apart, a family of complemented subsets consists of two families of subsets that are apart at every index. We follow \cite{Petrakis2019b} in this section.

\begin{definition}
Let $X$ be a set with a fixed inequality $\neq_X$ and $I$ be a set. An $I$-\emph{family of complemented subsets} of $X$ is a sextuple $\bm{\lambda}:=(\lambda_0^1,\mathcal{E}^1,\lambda_1^1,\lambda_0^0,\mathcal{E}^0,\lambda_1^0)$ where $\lambda^1:=(\lambda_0^1,\mathcal{E}^1,\lambda_1^1)$ and $\lambda^0:=(\lambda_0^0,\mathcal{E}^0,\lambda_1^0)$ are both $I$-families of subsets of $X$ such that
\begin{align*}
\forall i\in I:\; \lambda_0^1(i)\; ][\;\lambda_0^0(i)
\end{align*}
We denote by $\bm\lambda_0(i):=\big(\lambda_0^1(i),\lambda_0^0(i)\big)$ the complemented subset associated to $i\in I$. We say that $\bm\lambda$ is an $I$-\emph{set} of complemented subsets if
\begin{align*}
\forall i,j\in I:\; i=_I j \;\Leftrightarrow\; \bm\lambda_0(i)\eqcompset\bm\lambda_0(j)
\end{align*}
If $\bm\nu:=(\nu_0^1,E^1,\nu_1^1,\nu_0^0,E^0,\nu_1^0)$ is an $I$-family of complemented subsets as well, then a family of complemented subsets-map from $\bm\lambda$ to $\bm\nu$ is a pair $\bm\Psi:=(\Psi^1,\Psi^0)$ such that $\Psi^1:\lambda^1\Rightarrow_X\nu^1$ and $\Psi^0:\lambda^0\Rightarrow_X\nu^0$. We denote by $\mathtt{Map}_I^{X,\; ][}(\bm\lambda,\bm\nu)$ the totality of family of complemented subsets-maps from $\bm\lambda$ to $\bm\nu$, equipped with the equality of componentwise equality of family of subsets-maps. Concatenation of family of complemented subsets-maps and the identity map are also defined componentwise in the canonical way.
\end{definition}

To talk about measure-spaces in BCMT predicatively we also need the notion of a subfamily of complemented subsets.

\begin{definition}
Let $\bm{\lambda}:=(\lambda_0^1,\mathcal{E}^1,\lambda_1^1,\lambda_0^0,\mathcal{E}^0,\lambda_1^0)$ be an $I$-family of complemented subsets of $X$ and $J$ be a set with a function $h:J\rightarrow I$. Then by $(\bm\lambda\circ h)$ we denote the $J$-family of complemented subsets that is given by composing all (dependent) assignment routines with $h$. Now let $\bm\nu:=(\nu_0^1,E^1,\nu_1^1,\nu_0^0,E^0,\nu_1^0)$ be a $J$-family of complemented subsets of $X$, we say that $\bm\nu$ is a subfamily of $\bm\lambda$ via $h$ if there are family of complemented subset-maps $\bm\Psi$ from $\bm\nu$ to $(\bm\lambda\circ h)$ and $\bm\Psi^{-1}$ from $(\bm\lambda\circ h)$ to $\bm\nu$ such that $\bm\Psi\circ\bm\Psi^{-1}=_{\mathsmaller{\mathtt{Map}_J^{X,\;][}(\bm\nu,\bm\nu)}}\text{Id}_{\bm\nu}$ and $\bm\Psi^{-1}\circ\bm\Psi=_{\mathsmaller{\mathtt{Map}_J^{X,\;][}((\bm\lambda\circ h),(\bm\lambda\circ h))}}\text{Id}_{(\bm\lambda\circ h)}$
\end{definition}

By remark (\ref{subfamembedding}) it follows directly from the previous definitions that if in the above setting $\bm\nu$ is $J$-set of complemented subsets, we have an embedding $h:J\hookrightarrow I$.

At the end of this section we want to give an example of a canonical family of complemented subsets that can be constructed for any inhabited set, the family of detachable subsets of a set. Classically, the detachable subsets are in a one-to-one correspondence with the subsets of $X$.
\begin{definition}
Let $X$ be an inhabited set and let $\bm 2:=\{0,1\}$. Let $\neq_X$ be the following inequality on $X$:
\begin{align*}
x\neq_X y \;:\Leftrightarrow\; \exists f\in\mathbb{F}(X,\bm 2) \text{ s.t. } f(x)\neq f(y)
\end{align*}
Let $\bm\delta=(\delta_0^1,\mathcal{E}^1,\delta_1^1,\delta_0^0,\mathcal{E}^0,\delta_1^0)$ be the $\mathbb{F}(X,\bm 2)$-family of complemented subset of $(X,\neq_X)$ with $\mathbb{F}(X,\bm 2)$-families of subsets $\delta^1=(\delta_0^1,\mathcal{E}^1,\delta_1^1)$ and $\delta^0=(\delta_0^0,\mathcal{E}^0,\delta_1^0)$ given by the following data:
\begin{itemize}
\item For $f\in\mathbb{F}(X,\bm 2)$ we have that $\delta_0^1(f):=\{x\in X:\;f(x)=_{\bm 2}1\}$ \\ and $\delta_0^0(f):=\{x\in X:\;f(x)=_{\bm 2}0\}$
\item The embeddings $\varepsilon_f^1:\delta_0^1(f)\hookrightarrow X$ and $\varepsilon_f^0:\delta_0^0(f)\hookrightarrow X$ are all defined by $x\mapsto x$ for any $f\in\mathbb{F}(X,\bm 2)$.
\item The transport maps $\delta_{fg}^1:\delta_0^1(f)\rightarrow\delta_0^1(g)$ and $\delta_{fg}^0:\delta_0^0(f)\rightarrow\delta_0^0(g)$ are all defined by $x\mapsto x$ for any $f=_{\mathsmaller{\mathbb{F}(X,\bm 2)}}g$.
\end{itemize}
For $f\in\mathbb{F}(X,\bm 2)$ we define $\bm\delta_0(f):=\big(\delta_0^1(f),\delta_0^0(f)\big)$ to be the associated \emph{detachable complemented subset}.
\end{definition}

\begin{remark}
It is not hard to check that $\neq_X$ actually defines an appartness relation on $X$ and that $\bm\delta$ actually defines a $\mathbb{F}(X,\bm 2)$-family of complemented subsets w.r.t. this inequality. Moreover, we have that $\delta_0^1(f)\cup\delta_0^0(f)\eqsubset X$ and $\chi_{\bm\delta_0(f)}=_{\mathsmaller{\mathbb{F}(X,\bm 2)}}f$ for any $f\in\mathbb{F}(X,\bm 2)$. Using this, one can immediately see that 
\begin{align*}
\forall f,g\in\mathbb{F}(X,\bm 2):\;\; f=_{\mathsmaller{\mathbb{F}(X,\bm 2)}}g \;\Leftrightarrow\; \bm\delta_0(f)\eqcompset \bm\delta_0(g)
\end{align*}
i.e. that $\bm\delta$ is actually a set of complemented subsets.
\end{remark}

\section{Families of partial functions}
Finally, we introduce families of partial functions. Since a partial function consists of a subset (its domain) and a function on that subset, a family of partial functions consists of a family of subsets (the family of domains) and a dependent assignment routine, assigning to each index a function on the respective domain.

\begin{definition}\label{fampf}
Let $X, Y$ and $I$ be sets. An $I$-\emph{family of partial functions} from $X$ to $Y$ is a quadruple $\bm\Lambda:=(\lambda_0,\mathcal{E},\lambda_1,F)$, where
\begin{itemize}
\item $\lambda_0:I\rightsquigarrow\vnull$
\item $\mathcal{E}:\bigcurlywedge_{i\in I} \mathbb{F}(\lzi,X)$, such that, for every $i\in I$, we have that $\mathcal{E}(i):=\varepsilon_i$ is an embedding of $\lambda_0(i)$ into $X$
\item $\lambda_1:\bigcurlywedge_{i,j\in D(I)}\mathbb{F}(\lzi,\lzj)$ is called the \emph{modulus of function-likeness} of $\lambda_0$, and for every $i\in I$ it satisfies $\lambda_{ii}:=\id_{\lambda_0(i)}$, while for every $(i,j)\in D(I)$ it satisfies $(\lambda_{ij},\lambda_{ji}):\lambda_0(i)=_{\mathcal{P}(X)}\lambda_0(j)$
\item $F:\bigcurlywedge_{i\in I}\mathbb{F}(\lambda_0(i),Y)$  such that, for every $i\in I$, we have that $F(i):=f_i$ and for $(i,j)\in D(I)$ and $x\in\lzi$ we have that $f_i(x)=_Y \big(f_j\circ\lambda_{ij}\big)(x)$.
\end{itemize}
i.e. for $(i,j)\in D(I)$ the following diagrams commute
\[
\begin{tikzcd}
\lambda_0(i) \arrow[rr, "\lambda_{ij}", bend left]\arrow[rdd, "\varepsilon_i", hook, bend right]\arrow[rddd, "f_i"', bend right] &&\lambda_0(j) \arrow[ll, "\lambda_{ji}"', bend left]\arrow[ldd, "\varepsilon_j"', left hook->, bend left]\arrow[lddd, "f_j", bend left] \\
\\
&X \\
&Y
\end{tikzcd}
\]
We say that $\bm\Lambda$ is an $I$-\emph{set} of partial functions if
\begin{align*}
\forall i,j\in I:\; i=_I j \;\Leftrightarrow\; f_i=_{\mathsmaller{\mathbb{F}^{\rightharpoonup}(X,Y)}}f_j
\end{align*}
If $\bm M:=(\mu_0, E,\mu_1,G)$ is an $I$-family of partial functions from $X$ to $Y$ as well, a family of partial functions-map from $\bm\Lambda$ to $\bm M$ is a dependent assignment routine $\Psi:\bigcurlywedge_{i\in I}\mathbb{F}(\lzi,\mu_0(i))$ s.t. for every $i\in I$ the following diagrams commute
\[
\begin{tikzcd}
\lzi\arrow[rr, "\Psi_i"]\arrow[rdd, "\varepsilon_i", hook, bend right]\arrow[rddd, "f_i"', bend right] &&\mu_0(i)\arrow[ldd, "e_i"', left hook->, bend left]\arrow[lddd, "g_i", bend left] \\
\\
&X \\
&Y
\end{tikzcd}
\]
We denote by $\mathtt{Map}_I^{X\rightharpoonup Y}(\bm\Lambda,\bm M)$ the totality of family of partial functions-maps from $\bm\Lambda$ to $\bm M$, which is equipped with the equality
\begin{align*}
\Psi=_{\mathsmaller{\mathtt{Map}_I^{X\rightharpoonup Y}(\bm\Lambda,\bm M)}}\Xi \;:\Leftrightarrow\;\forall i\in I:\;\Psi_i=_{\mathsmaller{\mathbb{F}(\lzi,\mu_0(i))}}\Xi_i
\end{align*}
Composition of family of partial functions-maps and the identity map are defined just like for family of subsets-maps.
\end{definition}

\begin{remark}
Note that any $I$-family of partial functions $\bm\Lambda:=(\lambda_0,\mathcal{E},\lambda_1,F)$ we get an induced $I$-family of subsets $(\lambda_0,\mathcal{E},\lambda_1)$, which we may call the family of domains. It follows directly from the definitions that a family of partial functions-map is also a family of subsets-map between the respective families of domains.
\end{remark}

\begin{definition}\label{subfamilies pf}
Let $\bm\Lambda:=(\lambda_0,\mathcal{E},\lambda_1,F)$ be an $I$-family of partial functions from $X$ to $I$ and $J$ be a set with a function $h:J\rightarrow I$. Then by $(\bm\Lambda\circ h)$ we denote the $J$ family of partial functions  that is given by composing all (dependent) assignment routines with $h$. Now let $\bm M:=(\mu_0,E,\mu_1;G)$ be a $J$-family of partial functions, we say that $\mu$ is a subfamily of $\lambda$ via $h$ if there are family of partial functions-maps $\Psi$ from $\bm M$ to $(\bm\Lambda\circ h)$ and $\Psi^{-1}$ from $(\bm\Lambda\circ h)$ to $\bm M$, such that $\Psi\circ\Psi^{-1}=_{\mathsmaller{\mathtt{Map}_I^{X\rightharpoonup Y}(\bm M,\bm M)}}\text{Id}_{\bm M}$ and $\Psi^{-1}\circ\Psi=_{\mathsmaller{\mathtt{Map}_I^{X\rightharpoonup Y}((\bm\Lambda\circ h),(\bm\Lambda\circ h))}}\text{Id}_{(\bm\Lambda\circ h)}$, i.e. for each $j\in J$ the following diagrams commute.
\[
\begin{tikzcd}
&Y\\
\mu_0(j)\arrow[rd, "e_j"', bend right, hook]\arrow[r, "\Psi_j"']\arrow[ru, "g_j", bend left] &\lambda_0\big(h(j)\big)\arrow[r, "\Psi_j^{-1}"']\arrow[d, "\varepsilon_{h(j)}", hook]\arrow[u, "f_{h(j)}"'] &\mu_0(j)\arrow[ld, "e_j", bend left, left hook->]\arrow[lu, "g_j"', bend right]\\
&X
\end{tikzcd}\quad\quad
\begin{tikzcd} &X\\
\lambda_0\big(h(j)\big)\arrow[rd, "\varepsilon_{h(j)}"', bend right, hook]\arrow[r, "\Psi_j^{-1}"']\arrow[ru, "f_{h(j)}", bend left] &\mu_0(j)\arrow[d, "e_j", hook]\arrow[r, "\Psi_j"']\arrow[u, "g_j"'] &\lambda_0\big(h(j)\big)\arrow[ld, "\varepsilon_{h(j)}", bend left, left hook->]\arrow[lu, "f_{h(j)}"', bend right]\\
&X
\end{tikzcd}
\]
Note that this means $(\Psi_j,\Psi_j^{-1}):\;g_j=_{\mathsmaller{\mathbb{F}^{\rightharpoonup}(X,Y)}}f_{h(j)}$ for any $j\in J$ .
\end{definition}

\begin{remark}
If in the above setting $\bm M$ is a $J$-set of partial functions, we get that $h:J\hookrightarrow I$ is actually an embedding. This follows from the fact that any family of partial functions-map is a family of subsets-map on the family of domains, which allows us to use the proof given in remark (\ref{subfamembedding}).
\end{remark}

\chapter{Measure theory}
So far, we have considered some of the fundamental notions of BST and we have introduced the corresponding notions of set-indexed families. In this section we want to apply these tools to BCMT. Not only will our reformulation be predicative but we also can develop central parts of BCMT entirely without the axiom of countable choice.

\section{On the impredicative character of BCMT}
The basic notions of BCMT are integration and measure spaces. Their definitions in \cite{BB85} are already somewhat problematic from a predicative point of view. Moreover, the complete extension of an integration space, which plays a crucial role in BCMT, takes the class of integrable functions to a be set. As we will see below, this is not acceptable predicatively, a fact which has been pointed out repeatedely in the literature. In fact, the development of algebraic constructive measure theory mentioned in the introduction specifically addresses this problem and proposes a solution by giving a new point-free approach. 

However, we are interested in a solution that still works within BCMT or BISH more generally. In the next section we will take an in-depth look at the impredicativities present in BCMT and single out three particular problems that a predicative account needs to address. Two of those problems can be resolved by refining the definition of an integration and a measure space. This will lead us to the notions of a pre-integration and pre-measure space that, among other things, make explicit use of set-indexed families. The third problem concerns the integrable function and our proposal for its solution will be postponed to the last section of this capter.

\subsection{Three problems}
Consider the definition of a measure space as given in \cite[p. 282]{BB85}:
\begin{quote}
A \emph{measure space} is a triple $(X,M,\mu)$ consisting of a nonvoid set $X$ with an inequality $\neq$, a set $M$ of complemented sets in $X$, and a mapping $\mu$ of $M$ into $\mathbb{R}^{0+}$, such that the following properties hold. [...]
\end{quote}
and likewise that of an integration space \cite[p. 217]{BB85}:
\begin{quote}
A triple $(X,L,I)$ is an \emph{integration space} if $X$ is a nonvoid set with an inequality $\neq$, $L$ a subset of $\mathcal{F}(X)$, and $I$ is a mapping of $L$ into $\mathbb{R}$ such that the following properties hold. [...]
\end{quote}
The  question now arises, how we can speak about or define sets of complemented subsets and real-valued partial functions if the totalities $\mathcal{P}^{][}(X)$ and $\mathcal{F}(X)$ are not sets. Defining these sets by seperating them out of the totalities of complemented subsets or partial functions by some extensional property is not possible. However, at least the phrase "\emph{$L$ a subset of $\mathcal{F}(X)$}" seems to suggest such a reading.

This first problem is easily resolved using the tools introduced in the previous section. We defined sets of complemented subsets or partial functions to be certain kinds of set-indexed families. It is safe to assume that Bishop had these notions of sets of complemented subsets or partial functions in mind, but did not explicitly mention the respective index sets in order to make BCMT more accessible to classically trained mathematicians.

We want to make sure that we work absolutely predicatively and thus introduce the required indexing explicitely. A measure space is thus actually a quadruple $(X,I,\bm\lambda,\mu)$ where $X$ is an inhabited set with a fixed inequality $\neq_X$, $I$ is some index-set, $\bm\lambda$ is an $I$-set of complemented subsets of $X$ and $\mu$ is the measure, such that certain conditions hold. Similarly, an integration space is thus actually a quadruple $(X,I,\bm\Lambda,\int)$ where $X$ is an inhabited set with a fixed inequality $\neq_X$, $I$ is some index-set, $\bm\Lambda$ is an $I$-set of real valued partial functions of $X$ and $\int$ is the integral, such that certain conditions hold.

The second problem is a bit more serious. Consider the second defining property of a measure space in \cite[p. 282]{BB85}:
\begin{quote}
(10.1.2) If $A$ and $A\wedge B$ belong to $M$, then so does $A-B$, and $\mu(A)=\mu(A\wedge B)+\mu(A-B)$.
\end{quote}
Formally, with the indexing being made explicit, this condition looks as follows:
\begin{align*}
&\forall i\in I\;\forall \bm B\in\mathcal{P}^{][}(X):\; \bigg(\exists j\in I \text{ s.t. } \bm\lambda_0(j)=\bm\lambda_0(i)\wedge \bm B\bigg) \; \\
&\Rightarrow\;\bigg(\exists k\in I \text{ s.t. } \bm\lambda_0(k)=\bm\lambda_0(i)-\bm B 
\quad\&\quad \mu(\bm\lambda_0(i))=\mu(\bm\lambda_0(j))+\mu(\bm\lambda_0(k))\bigg)
\end{align*}
This means that the definition of measure space contains universal quantification over $\mathcal{P}^{][}(X)$, a totality which is not a set. From a predicative point of view this is not an acceptable definition. In his '67 book Bishop was much more cautious regarding this problem, giving the follong definition of measure space (see \cite[p.183]{Bishop67}):
\begin{quote}
[...] Let $\mathfrak{F}$ be any family of complemented subsets of $X$ [...]
Let $\mathfrak{M}$ be a subfamily of $\mathfrak{F}$ closed under finite unions, intersections, and differences. Let the function $\mu:\mathfrak{M}\rightarrow\mathbb{R}^{0+}$ satisfy the following conditions [...]
\end{quote}
Here a measure space takes the form $(X,I,\bm\lambda,J,\bm\nu,\mu)$, where $\bm\lambda$ is an $I$-family and $\bm\nu$ is a $J$-family of complemented subsets s.t. $\bm\lambda$ is a subfamily of $\bm\nu$. The measure $\mu$ is defined on the `smaller' family $\bm\lambda$ and the `larger' family $\bm\nu$ works as a substitute for the totality $\mathcal{P}^{][}(X)$. The troubling defining condition then takes the form:
\begin{quote}
If $A\in\mathfrak{M}$, if $B\in\mathfrak{F}$, and if $A\cap B\in\mathfrak{M}$, then $A-B\in\mathfrak{M}$ and $\mu(A)=\mu(A\cap B)+\mu(A-B)$
\end{quote}
This means that we can replace quantification over $\mathcal{P}^{][}(X)$ by quantification over the larger index set $J$.

Before we put everything we discussed above together to give a precise and predicative definition of integration and measure space we want to mention the third problem that is also the most serious one. Consider the definition of an integrable function over some integration space $(X,L,I)$ in \cite[p. 222]{BB85}:
\begin{quote}
An element $f$ of $\mathcal{F}(X)$ is an \emph{integrable function} if there exists a sequence $(f_n)$ of functions in $L$ such that $\sum_n I(\abs{f_n})$ converges and, and $f(x)=\sum_n{f_n(x)}$ whenever $\sum_n\abs{f_n(x)}$ converges. The sequence $(f_n)$ is called a \emph{representation} of $f$ [...] in $L$. We write $L_1$ for the class of integrable functions.
\end{quote}
It has been pointed out that this class $L_1$ is in general not a set, since its membership condition explicitly involves the totality of partial functions. However, the complete extension of an integration space contains $L_1$ as its set of partial functions, on which the extended integral is defined. Our strategy to resolve this problem is to replace $L_1$ by something smaller that actually constitutes a set of partial functions and show that working with that smaller family is actually sufficient to construct the complete extension. 

\subsection{Pre-measure and pre-integration spaces}

Before tackling the third problem mentioned in the previous section, we have to give precise definitions of the notions of integration and measure space that we want to work with. We argued that sets of complemented subsets and partial functions should explicitly be treated as set-indexed families. This forces us to make a few choices on how to express the defining conditions of integration and measure spaces. In BCMT a measure is a function on a set of complemented subsets and an integral is function on a set of partial functions. Since we are concerned with set-indexed families rather than simple sets we have to make precise what kind of functions measures and integrals actually are. Also, the definitions of measure and integration spaces involve certain closedness conditions that we want to rewrite in a way that allows us to avoid the use of choice principles.

In his paper "Mathematics as a numerical language" Bishop makes a suggestion on how to formalize the measure theory of his '67 book:
\begin{quote}
A \emph{measure space} is a family $\mathcal{M}\equiv\{A_t\}_{t\in T}$ of complemented subsets of a set $X$ [...], a map $\mu:T\rightarrow\mathbb{R}^{0+}$ and an additional structure as follows: [...] If $t$ and $s$ are in $T$, there exists an element $s\vee t$ of $T$ such that $A_{s\vee t}<A_t\cup A_s$. Similarly, there exist operations $\wedge$ and $\sim$ on $T$, corresponding to the set-theoretic operations $\cap$ and $-$.
\begin{flushright}
-\hspace{0,2cm}\cite[p. 67]{Bishop1970}
\end{flushright} 
\end{quote}
Translating this to BCMT, there are two main ideas here to consider:
\begin{enumerate}
\item The measure of a measure space is a function defined on the index-set of the set of complemented susets and likewise is the integral a function on the index-set of the set of partial functions.
\item Set theoretic operations under which the complemented subsets of a measure space are closed are introduced via operations on the index-set corresponding to those set-theoretic operation. The same applies to the function-theoretic operations under which the partial functions of an integration space are closed.
\end{enumerate}

Putting everything together we arrive at what will be called pre-integration and pre-measure spaces. For the remainder of this paper we want to show that important parts of BCMT can be recovered using these predicative notions rather the original measure and integration spaces as defined in \cite{BB85}. We start by introducing pre-integration spaces.

\begin{definition}\label{preintegration space}
Let $X$ be an inhabited set with inequality $\neq_X$, $I$ a set, $\bm\Lambda=(\lambda_0,\lambda_1,\mathcal{E},F)$ an $I$-set of real-valued partial functions and $\int:I\rightarrow\mathbb{R}$ a function. Furthermore, assume that we have assignment routines
\begin{align*}
\_\cdot\_:\mathbb{R}\times I\rightsquigarrow I \\
\_+\_:I\times I\rightsquigarrow I \\
\abs{\_}:I\rightsquigarrow I \\
\wedge_1:I\rightsquigarrow I
\end{align*}
Then $(X,I,\bm\Lambda,\int )$ is called a \emph{pre-integration} space if the following conditions hold
\begin{labeling}{(PIS4)}
\item[(PIS1)]$\forall i,j\in I\;\forall a,b\in\mathbb{R}$ we have		
			\begin{itemize}
			\item $f_{a\cdot i }\eqpf a f_i$
			\item $f_{i+j}\eqpf f_i+f_j$
			\item $f_{\abs{i}}\eqpf\abs{f_i}$
			\item $f_{\wedge_1(i)}\eqpf f_i\wedge 1$
			\end{itemize}
			and we have that $\int (a\cdot i +b\cdot j)=_{\mathbb{R}}a\int i + b\int j$
\item[(PIS2)]$\forall i\in I\;\forall\alpha\in\mathbb{F}(\mathbb{N},I)$, if
			\begin{itemize}
			\item $\forall m\in\mathbb{N}:\; f_{\alpha(m)}\geq 0$ and 
			\item $\ell:=\lim_{n\rightarrow\infty}\int \sum_{k=1}^n \alpha(k)=\sum_{k=1}^\infty\int\alpha(k) $ exists and $\ell<\int i$
			\end{itemize}
			then there is $x\in\lzi\cap\big(\bigcap_{n\in\mathbb{N}}\lambda_0(\alpha(n))\big)$ s.t. ${\ell':=\sum_{k=1}^\infty f_{\alpha(k)}(x)}$ exists and $\ell'< f_i(x)$.
\item[(PIS3)] $\exists i\in I$ s.t. $\int i=_{\mathbb{R}}1$
\item[(PIS4)] $\forall i\in I$, for $\alpha,\beta\in\mathbb{F}(\mathbb{N},I)$ defined by $\alpha(m)=m\cdot (\wedge_1(m^{-1}\cdot i)) \text{ and } \beta(m)=m^{-1}\cdot(\wedge_1(m\cdot\abs{i}))$ we have that $\ell:=\lim_{n\rightarrow\infty}\int \alpha(n),\;\ell':=\lim_{n\rightarrow\infty}\int \beta(n)$ exist and $\ell=_{\mathbb{R}}\int i$ and $\ell'=_{\mathbb{R}}0$.
\end{labeling}
\end{definition}

\begin{remark}
From (PIS1) and the fact that $\bm\Lambda$ is an $I$-set of partial functions it follows that the assignment routines $\cdot,+,\abs{\_},\wedge_1$ are functions. Also note that we can define operations $\vee,\wedge:I\rightarrow I$ by $i\vee j := j +\frac{1}{2}\cdot(i-j+\abs{i-j})$ and $i\wedge j:= -\big( (-i)\vee(-j)\big)$ for $i,j\in I$ such that $f_{i\vee j}\eqpf f_i\vee f_j$ and $f_{i\wedge j}\eqpf f_i\wedge f_j$.
\end{remark}

\begin{definition}\label{premeasure space}
Let $X$ be an inhabited set with an apartness-relation $\neq_X$, $I,J$ sets, $\bm{\lambda}=(\lambda_0^1,\lambda_1^1,\mathcal{E}^1,\lambda_0^0,\lambda_1^0,\mathcal{E}^0)$ an $I$-set and $\bm\nu=(\nu_0^1,\nu_1^1,E^1,\nu_0^0,\nu_1^0,E^0)$ a $J$-set of complemented subsets of $X$ s.t. $\bm\lambda$ is a subfamily of $\bm\nu$ (i.e. we have an embedding ${h:I\hookrightarrow J}$ and family maps $\bm\Psi,\bm\Psi^{-1}$ that we take as implicitly given) and $\mu:I\rightarrow\mathbb{R}_{\geq 0}$ a function. Furthermore, assume that we have assignment routines $\wedge:J\times J\rightsquigarrow J$, $\vee:J\times J\rightsquigarrow J$ and $\sim:J\rightsquigarrow J$, as well as $\wedge:I\times I\rightsquigarrow I$, $\vee:I\times I\rightsquigarrow I$ and $\sim:I\times I\rightsquigarrow I$ s.t. for all $i,j\in I$ we have
\begin{itemize}
\item $h(i\wedge j)=_J h(i)\wedge h(j)$
\item $h(i\vee j)=_J h(i)\vee h(j)$
\item $h(i\sim j)=_J h(i)\wedge \sim h(j)$
\end{itemize}
Then $(X,I,\bm\lambda,J,\bm\nu,\mu)$ is a \emph{pre-measure space} if the following conditions hold:
\begin{labeling}{(PMS4)}
\item[(PMS1)] $\forall i,j\in J$ we have
			 \begin{itemize}
			 \item $\bm\nu_0(i\wedge j)\eqcompset\bm\nu_0(i)\wedge\bm\nu_0(j)$
			 \item $\bm\nu_0(i\vee j)\eqcompset\bm\nu_0(i)\vee\bm\nu_0(j)$
			 \item $\bm\nu_0(\sim i)\eqcompset -\bm\nu_0(i)$
			 \end{itemize}
			 and  for $i,j\in I$ we have that $\mu(i)+(j)=_{\mathbb{R}}\mu(i\vee j)+\mu(i\wedge j)$.
\item[(PMS2)] $\forall i\in I\;\forall j\in J:$ If there is a $k\in I$ s.t. $h(k)=_J h(i)\wedge j$, then there exist $l\in I$ s.t. $h(l)=_J h(i)\wedge\sim j$ and $\mu(i)=_{\mathbb{R}}\mu(k)+\mu(l)$.
\item[(PMS3)] $\exists i\in I \text{ s.t. } \mu(i)>0$.
\item[(PMS4)] $\forall \alpha\in\mathbb{F}(\mathbb{N},I):$ If $\ell:=\lim_{m\rightarrow\infty}\mu(\bigwedge_{n=1}^m\alpha(n))$ exists and $\ell>0$, then there is a $x\in\bigcap_{n\in\mathbb{N}}\lambda_0^1(\alpha(n))$ (i.e. $\bigcap_{n\in\mathbb{N}}\lambda_0^1(\alpha(n))$ is inhabited).
\end{labeling}
\end{definition}

\begin{remark}\label{basic remarks premeasure space}
Let $i,j\in I$, then it follows that 
\begin{itemize}
\item $\bm\lambda_0(i\wedge j)\eqcompset \bm\lambda_0(i)\wedge\bm\lambda_0(j)$
\item $\bm\lambda_0(i\vee j)\eqcompset \bm\lambda_0(i)\vee\bm\lambda_0(j)$
\item $\bm\lambda_0(i\sim j)\eqcompset \bm\lambda_0(i)-\bm\lambda_0(j)$
\end{itemize}
from (PMS1) and the fact $\bm\lambda$ is a subfamily of $\bm\nu$. Also it follows from (PMS2) that $\mu(i)=\mu(i\wedge j)+\mu(i\sim j)$ for all $i,j\in I$.
\end{remark}

At the end of this section we want to give a concrete example of a pre-measure space. We have alredy seen that if we equip any set $X$ with the inequality
\begin{align*}
x\neq_X y \;:\Leftrightarrow\; \exists f\in\mathbb{F}(X,\bm 2) \text{ s.t. } f(x)\neq f(y)
\end{align*}
We can define the $\mathbb{F}(X,\bm 2)$-set of detachable  subsets, which classically corresponds to the powerset of $X$. The perhaps simplest example of a measure space in classical measure theory is the $\sigma$-algebra of all subsets together with the Dirac-measure, i.e. the measure concentrated at a single point. We want to give an analogous example for pre-measure spaces.\footnote{This example was first described \cite{Petrakis2019b}, but appears here in a slightly different form since we use a different notion of pre-measure space.}

Let $X$ be inhabited with $x_0\in X$ and equiped with the inequality $\neq_X$ defined above. From now on we regard $\bm 2$ as  asubset of $\mathbb{R}$. Define
\begin{align*}
\mu_{x_0}:\mathbb{F}(X,\bm 2)&\rightarrow\mathbb{R}_{\geq 0} \\
f&\mapsto f(x_0)
\end{align*}
We have to introduce operations on $\mathbb{F}(X,\bm 2)$, so let $f,g\in\mathbb{F}(X,\bm 2)$ and define
\begin{itemize}
\item $f\wedge g:=fg$
\item $f\vee g:=f+g-fg$
\item $\sim f:= 1-f$
\end{itemize}

Then $(X,\mathbb{F}(X,\bm 2),\bm\delta,\mathbb{F}(X,\bm 2),\bm\delta,\mu_{x_0})$ is a pre-measure space, where the embedding $\id_{\mathbb{F}(X,\bm 2)}:\mathbb{F}(X,\bm 2)\rightarrow\mathbb{F}(X,\bm 2)$ and the identity family maps make $\bm\delta$ into a subfamily of itself and the operations are the ones defined above for both index-sets. 

It remains to verify (PMS1)-(PMS4). Recall that for any $f\in\mathbb{F}(X,\bm 2)$ we have that $\chi_{\bm\delta_0(f)}\eqpf f$ and that complemented subsets are equal if and only if their characteristic functions are equal as partial functions, which gives us
\begin{itemize}
\item $\bm\delta_0(f\wedge g)\eqcompset\bm\delta_0(f)\wedge\bm\delta_0(g)$
\item $\bm\delta_0(f\vee g)\eqcompset\bm\delta_0(f)\vee\bm\delta_0(g)$
\item $\bm\delta_0(\sim f)\eqcompset-\bm\delta_0(f)$
\end{itemize}
Moreover, we have that
\begin{align*}
\mu_{x_0}(f)+\mu_{x_0}(g)=f(x_0)+g(x_0)-f(x_0)g(x_0)+f(x_0)g(x_0)=\mu_{x_0}(f\vee g)+\mu_{x_0}(f\wedge g)
\end{align*}
which finishes the verification of (PMS1) and we have that
\begin{align*}
\mu_{x_0}(f)=f(x_0)\big(1-g(x_0)\big)+f(x_0)g(x_0)=\mu_{x_0}(f\sim g)+\mu_{x_0}(f\wedge g)
\end{align*}
which finishes the verification of (PMS2), since the first part of (PMS2) holds trivially. To check (PMS3), observe that for $\bar{1}\in\mathbb{F}(X,\bm 2)$, the constant function $1$, we have that $\mu_{x_0}(\bar{1})=1$. Finally let $\alpha\in\mathbb{F}(\mathbb{N},\mathbb{F}(X,\bm 2))$ be such that
\begin{align*}
\ell:=\lim_{m\rightarrow\infty}\mu_{x_0}\bigg(\bigwedge_{n=1}^m\alpha_n\bigg)=\lim_{m\rightarrow\infty}\prod_{n=1}^m\alpha_n(x_0)
\end{align*}
exists and $\ell>0$. Then $\ell=1$ and in particular $\alpha_n(x_0)=1$ for all $n\in\mathbb{N}$, which means $x_0\in\bigcap_{n\in\mathbb{N}}\delta_0^1(\alpha_n)$. This proves (PMS4).

\section{The pre-integration space of simple functions}
In this section we start with a pre-measure space $(X,I,\bm\lambda,J,\bm\nu,\mu)$ and want to construct the corresponding pre-integration space of simple functions. Together with the previous example of the pre-measure space of detachable subsets with the Dirac-measure this gives us a first example of a pre-integration space. We start by proving a few basic facts about pre-measure spaces (see \cite[p. 283-284]{BB85}).
\begin{lemma}\label{pre-10.2}
Let $i\in I$ s.t. $\lambda_0^1(i)=\emptyset$, then $\mu(i)=0$
\end{lemma}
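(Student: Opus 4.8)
The plan is to exploit the additivity identity recorded in Remark~\ref{basic remarks premeasure space}, namely $\mu(i)=_{\mathbb{R}}\mu(i\wedge j)+\mu(i\sim j)$ for all $i,j\in I$, specialised to $j:=i$. This yields $\mu(i)=_{\mathbb{R}}\mu(i\wedge i)+\mu(i\sim i)$, so it suffices to show that both $i\wedge i$ and $i\sim i$ carry the same $\mu$-value as $i$; the identity then reads $\mu(i)=_{\mathbb{R}}\mu(i)+\mu(i)$, forcing $\mu(i)=_{\mathbb{R}}0$. Since $\mu$ is a function on $I$ and $\bm\lambda$ is by hypothesis an $I$-\emph{set} of complemented subsets, it is enough to establish the two equalities $\bm\lambda_0(i\wedge i)\eqcompset\bm\lambda_0(i)$ and $\bm\lambda_0(i\sim i)\eqcompset\bm\lambda_0(i)$ of complemented subsets: each of these transfers, via the $I$-set property, to $i\wedge i=_I i$ resp.\ $i\sim i=_I i$, and hence to $\mu(i\wedge i)=_{\mathbb{R}}\mu(i)$ and $\mu(i\sim i)=_{\mathbb{R}}\mu(i)$. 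Throughout I write $\bm\lambda_0(i)=(A^1,A^0)$ with $A^1:=\lambda_0^1(i)$ and $A^0:=\lambda_0^0(i)$.

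For the first equality I would again use Remark~\ref{basic remarks premeasure space} to reduce to $\bm\lambda_0(i)\wedge\bm\lambda_0(i)\eqcompset\bm\lambda_0(i)$ and then unfold the definition of the complemented meet. Its positive part is $A^1\cap A^1\eqsubset A^1$, while its negative part $(A^1\cap A^0)\cup(A^0\cap A^1)\cup(A^0\cap A^0)$ collapses to $A^0$ because the two mixed intersections are empty by the apartness $A^1\;][\;A^0$. Thus $\bm\lambda_0(i)\wedge\bm\lambda_0(i)\eqcompset(A^1,A^0)$, and this step requires no assumption on $i$.

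The second equality is where the hypothesis $\lambda_0^1(i)=\emptyset$ enters, and I expect it to be the main point. By Remark~\ref{basic remarks premeasure space} we have $\bm\lambda_0(i\sim i)\eqcompset\bm\lambda_0(i)-\bm\lambda_0(i)=\bm\lambda_0(i)\wedge(-\bm\lambda_0(i))$, and with $-\bm\lambda_0(i)=(A^0,A^1)$ the definition of $\wedge$ gives positive part $A^1\cap A^0$, empty by apartness, and negative part $(A^1\cap A^1)\cup(A^0\cap A^0)\cup(A^0\cap A^1)\eqsubset A^1\cup A^0$. So in general $\bm\lambda_0(i)-\bm\lambda_0(i)\eqcompset(\emptyset,A^1\cup A^0)$, and it is exactly the assumption $A^1=\emptyset$ that collapses $A^1\cup A^0$ to $A^0$ and identifies this with $\bm\lambda_0(i)=(\emptyset,A^0)$. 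Combining the two equalities with the $I$-set property and the additivity identity closes the argument; the only delicate bookkeeping is tracking which intersections vanish by apartness and checking that the empty positive part is precisely what the subtraction needs.
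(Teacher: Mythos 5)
Your proof is correct, but it takes a genuinely different route from the paper's. The paper argues by contradiction via (PMS4): assuming $\mu(i)>0$, the constant sequence $\alpha(n):=i$ satisfies $\lim_m\mu(\bigwedge_{n=1}^m\alpha(n))=\mu(i)>0$, so (PMS4) produces an inhabitant of $\bigcap_n\lambda_0^1(\alpha(n))\eqsubset\lambda_0^1(i)=\emptyset$, a contradiction; one then concludes $\mu(i)=0$ from $\neg(\mu(i)>0)$ and $\mu(i)\geq 0$ using Lemma (2.18) of Bishop--Bridges. Your argument instead stays entirely within the algebraic axioms: it specialises the additivity $\mu(i)=\mu(i\wedge j)+\mu(i\sim j)$ of Remark (\ref{basic remarks premeasure space}) to $j:=i$, verifies $\bm\lambda_0(i\wedge i)\eqcompset\bm\lambda_0(i)$ and (using $\lambda_0^1(i)=\emptyset$) $\bm\lambda_0(i\sim i)\eqcompset\bm\lambda_0(i)$, transfers these through the $I$-set property to $=_I$ and hence through $\mu$, and reads off $\mu(i)=2\mu(i)$. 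All the individual steps check out: the mixed intersections vanish by axiom (i) of the apartness relation, the $I$-set property is part of the definition of a pre-measure space, and $\mu(i)=2\mu(i)\Rightarrow\mu(i)=0$ is unproblematic real arithmetic. What your version buys is economy of means --- it avoids (PMS4), the constructive order lemma (2.18), and any reductio --- and it is arguably more in the spirit of a purely algebraic development. What the paper's version buys is a reusable byproduct: its proof actually establishes that $\mu(j)>0$ forces $\lambda_0^1(j)$ to be inhabited, and this stronger extracted content is invoked later (e.g.\ in the proof of Lemma (\ref{pre-10.6})), whereas your argument yields only the stated equality $\mu(i)=0$.
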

\begin{proof}
Assume that $\mu(i)>0$ and let $\alpha\in\mathbb{F}(\mathbb{N},I)$ be given by $\alpha(n):= i$ for all $n\in\mathbb{N}$. Then $\mu(i)=\lim_{m\rightarrow\infty}\mu\big(\bigwedge_{n=1}^m\alpha(n)\big)>0$ (in particular this limit exists). By (PMS4) we get that $\bigcap_{n\in\mathbb{N}}\lambda_0^1(\alpha(n))\eqsubset\lambda_0^1(i)$ is inhabited, which is a contradiction. Hence $\mu(i)=0$ by Lemma (2.18) of \cite{BB85} and the fact that $\mu(i)\geq 0$.
\end{proof}

\begin{lemma}\label{pre-10.3}
Let $i_1,...,i_n\in I$ and define $F:=\bigcap_{k=1}^n\big(\lambda_0^1(i_k)\cup\lambda_0^0(i_k)\big)$, then there is a $j\in I$ s.t. $\bm\lambda_0(j)=(\emptyset,F)$.
\end{lemma}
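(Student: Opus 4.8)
The goal is to exhibit a single index $j\in I$ whose associated complemented subset $\bm\lambda_0(j)$ equals $(\emptyset,F)$ up to $\eqcompset$. The plan is to build $j$ out of $i_1,\dots,i_n$ using the index operations $\sim$ and $\wedge$ provided by the pre-measure space structure (Definition \ref{premeasure space}), and then to transport the computation to the level of complemented subsets via Remark \ref{basic remarks premeasure space}. The key observation is that for any index $k$ the complemented subset $\bm\lambda_0(i_k)-\bm\lambda_0(i_k)$ has empty positive part and negative part equal to the whole domain $\lambda_0^1(i_k)\cup\lambda_0^0(i_k)$.

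To see this, for a complemented subset $\bm A=(A^1,A^0)$ I would compute $\bm A-\bm A=\bm A\wedge(-\bm A)$ directly from the definitions. The positive part is $A^1\cap A^0$, which is $\eqsubset\emptyset$ because $A^1\,][\,A^0$: any element of the intersection would yield both $i_{A^1}(a)=_X i_{A^0}(b)$ and $i_{A^1}(a)\neq_X i_{A^0}(b)$, contradicting the first apartness axiom. The negative part unwinds to $(A^1\cap A^1)\cup(A^0\cap A^0)\cup(A^0\cap A^1)$; here $A^0\cap A^1\eqsubset\emptyset$ for the same reason, while $A^1\cap A^1\eqsubset A^1$ and $A^0\cap A^0\eqsubset A^0$ since the intersection of a subset with itself is its diagonal. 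Hence $\bm A-\bm A\eqcompset(\emptyset, A^1\cup A^0)$, and applying this to $i_k$ together with Remark \ref{basic remarks premeasure space} gives $\bm\lambda_0(i_k\sim i_k)\eqcompset(\emptyset,\lambda_0^1(i_k)\cup\lambda_0^0(i_k))$.

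To combine these, I would set $j:=(i_1\sim i_1)\wedge\cdots\wedge(i_n\sim i_n)$, where the iterated $\wedge$ is the index operation on $I$; associativity and commutativity of the complemented meet (up to $\eqcompset$) make the bracketing irrelevant. Repeatedly applying Remark \ref{basic remarks premeasure space} then yields $\bm\lambda_0(j)\eqcompset\bigwedge_{k=1}^n(\emptyset,\lambda_0^1(i_k)\cup\lambda_0^0(i_k))$. Writing $D_k:=\lambda_0^1(i_k)\cup\lambda_0^0(i_k)$, a direct computation from the definition of $\wedge$ shows that the meet of complemented subsets with empty positive parts again has empty positive part, and negative part the intersection of the negative parts: $(\emptyset,D_1)\wedge(\emptyset,D_2)\eqcompset(\emptyset,D_1\cap D_2)$, since every term in the defining formula except $D_1\cap D_2$ contains an $\emptyset$ factor. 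Iterating gives negative part $\bigcap_{k=1}^n D_k=F$, so $\bm\lambda_0(j)\eqcompset(\emptyset,F)$ as required.

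The main obstacle is not conceptual but the bookkeeping of the subset equalities $\eqsubset$ when simplifying the unions and intersections inside the definition of the complemented meet: one must verify $A^1\cap A^0\eqsubset\emptyset$ from apartness, that $\emptyset$ behaves as a unit for $\cup$ and as an absorbing element for $\cap$ up to $\eqsubset$, and that the $n$-fold meet may be formed unambiguously. All of these are the \emph{straightforward though tedious} lattice laws already alluded to for the complemented operations, so once the single-index identity $\bm A-\bm A\eqcompset(\emptyset,A^1\cup A^0)$ is in hand the rest is routine.
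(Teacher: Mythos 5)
Your proof is correct and takes essentially the same route as the paper's: both reduce the problem to the identity $\bm\lambda_0(i_k\sim i_k)\eqcompset(\emptyset,\lambda_0^1(i_k)\cup\lambda_0^0(i_k))$ and then combine the $n$ resulting indices. The only difference is that the paper takes $j:=\bigvee_{k=1}^n(i_k\sim i_k)$ where you take the $\wedge$-combination; since for complemented subsets with empty positive part both $\vee$ and $\wedge$ return empty positive part and the intersection of the negative parts, the two choices coincide here, and your write-up additionally spells out the verification of the key identity $\bm A-\bm A\eqcompset(\emptyset,A^1\cup A^0)$ that the paper merely asserts.
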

\begin{proof}
For each  $k=1,...,n$ we have that $\bm\lambda_0(i_k\sim i_k)=(\emptyset,\lambda_0^1(i_k)\cup\lambda_0^0(i_k))$. It follows that $\bm\lambda_0(\bigvee_{k=1}^n i_k\sim i_k)=\bigvee_{k=1}^n \bm\lambda_0(i_k\sim i_k)=(\emptyset,F)$.
\end{proof}

\begin{lemma}\label{pre-10.4}
Forall $j,i_1,...,i_n\in I$ and define $F:=\bigcap_{k=1}^n(\lambda_0^1(i_k)\cup\lambda_0^0(i_k))$ there is a $k\in I$ s.t. $\bm\lambda_0(k)=(\lambda_0^1(j)\cap F,\lambda_0^0(j)\cap F) $ and $\mu(k)=\mu(j)$.
\end{lemma}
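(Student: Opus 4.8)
The plan is to realise $k$ as a join of $j$ with an auxiliary index whose associated complemented subset has empty positive part, so that forming the join intersects both components of $\bm\lambda_0(j)$ with $F$ while leaving the measure of $j$ unchanged.

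First I would apply Lemma \ref{pre-10.3} to $i_1,\dots,i_n$ to obtain an index $m\in I$ with $\bm\lambda_0(m)\eqcompset(\emptyset,F)$, and then set $k:=j\vee m$. By Remark \ref{basic remarks premeasure space} we have $\bm\lambda_0(k)\eqcompset\bm\lambda_0(j)\vee\bm\lambda_0(m)$. Unfolding the definition of $\vee$ on complemented subsets with second argument $(\emptyset,F)$, the positive part collapses to $(\lambda_0^1(j)\cap F)\cup(\lambda_0^0(j)\cap\emptyset)\cup(\lambda_0^1(j)\cap\emptyset)\eqsubset\lambda_0^1(j)\cap F$, while the negative part is $\lambda_0^0(j)\cap F$. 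This yields
\[
\bm\lambda_0(k)\eqcompset\big(\lambda_0^1(j)\cap F,\;\lambda_0^0(j)\cap F\big),
\]
which is the first assertion.

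For the measure I would invoke (PMS1), which gives $\mu(j)+\mu(m)=_{\mathbb{R}}\mu(j\vee m)+\mu(j\wedge m)$. Both correction terms vanish by Lemma \ref{pre-10.2}: the positive part of $\bm\lambda_0(m)$ satisfies $\lambda_0^1(m)\eqsubset\emptyset$, so $\mu(m)=0$; and computing $\bm\lambda_0(j\wedge m)\eqcompset\bm\lambda_0(j)\wedge\bm\lambda_0(m)$ shows its positive part is $\lambda_0^1(j)\cap\emptyset\eqsubset\emptyset$, so $\mu(j\wedge m)=0$ as well. Hence $\mu(k)=\mu(j\vee m)=\mu(j)$, completing the proof.

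The routine part is the bookkeeping with the definitions of $\wedge$ and $\vee$ on complemented subsets; the only point needing care is verifying that the positive-part hypothesis of Lemma \ref{pre-10.2} genuinely holds for both $m$ and $j\wedge m$, since that is exactly what forces the two extra terms in (PMS1) to disappear. The conceptual key, and the step I would flag as easy to miss, is to take the join with an empty-positive-part complemented subset rather than a meet, so that one intersects with $F$ without altering the measure of $j$.
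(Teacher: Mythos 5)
Your proof is correct, and it rests on the same two pillars as the paper's: Lemma \ref{pre-10.3} supplies an auxiliary index whose complemented subset is $(\emptyset,F)$, and Lemma \ref{pre-10.2} kills the correction terms. The implementation differs slightly. The paper sets $l:=\bigvee_{k=1}^n i_k\sim i_k$ and takes $k:=j\sim l$, so that $\bm\lambda_0(k)=\bm\lambda_0(j)\wedge(F,\emptyset)=(\lambda_0^1(j)\cap F,\lambda_0^0(j)\cap F)$, and then gets $\mu(j)=\mu(j\wedge l)+\mu(k)$ from (PMS2) together with $\mu(j\wedge l)=0$. You instead take the join $k:=j\vee m$ with $\bm\lambda_0(m)\eqcompset(\emptyset,F)$ and use the modularity law of (PMS1), $\mu(j)+\mu(m)=\mu(j\vee m)+\mu(j\wedge m)$, which costs you one extra application of Lemma \ref{pre-10.2} (to see $\mu(m)=0$ as well as $\mu(j\wedge m)=0$) but avoids (PMS2) entirely. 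Since $\bm A\vee(\emptyset,F)$ and $\bm A\wedge(F,\emptyset)$ unfold to the same pair, the two constructions produce equal complemented subsets; your bookkeeping of the positive and negative parts is accurate, and both routes are equally valid given the axioms of a pre-measure space.
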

\begin{proof}
Let $l:=\bigvee_{k=1}^n i_k\sim i_k\in I$ and define $k:=j\sim l\in I$. Note that $\bm\lambda_0(k)=(\lambda_0^1(j)\cap F,\lambda_0^0(j)\cap F)$ by lemma (\ref{pre-10.3}). Since $\bm\lambda_0(j\wedge l)=(\emptyset,(\lambda_0^1(j)\cup\lambda_0^0(j))\cap F)$, we get that $\mu(j\wedge l)=0$ by lemma (\ref{pre-10.2}). Thus by (PMS2) we get that
\begin{align*}
\mu(j)=\mu(j\wedge l)+\mu(k)=\mu(k)
\end{align*}
\end{proof}

\begin{lemma}\label{pre-10.5}
Let $i,j,i_1,...,i_n\in I$ and define $F:=\bigcap_{k=1}^n(\lambda_0^1(i_k)\cup\lambda_0^0(i_k))$, $F_i:=\lambda_0^1(i)\cup\lambda_0^0(i)$, $F_j:=\lambda_0^1(j)\cup\lambda_0^0(j)$ and $F':=F_i\cap F_j\cap F$. If $\chi_{\bm\lambda_0(i)}(x)\leq \chi_{\bm\lambda_0(j)}(x)$ for every $x\in F'$, then $\mu(i)\leq \mu(j)$.
\end{lemma}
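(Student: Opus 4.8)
I would prove this monotonicity statement by reducing, via Lemma \ref{pre-10.4}, to the case in which the two complemented subsets share a common domain, and then exploiting the additivity of $\mu$ from Remark \ref{basic remarks premeasure space}. The plan is as follows.

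First I would augment the index list to $\{i,j,i_1,\dots,i_n\}$ and note that $F'$ is precisely the boundary set associated with this longer list, since
\[
F' \eqsubset \big(\lambda_0^1(i)\cup\lambda_0^0(i)\big)\cap\big(\lambda_0^1(j)\cup\lambda_0^0(j)\big)\cap\bigcap_{k=1}^n\big(\lambda_0^1(i_k)\cup\lambda_0^0(i_k)\big).
\]
Applying Lemma \ref{pre-10.4} twice with this list, once to $i$ and once to $j$, yields indices $p,q\in I$ with
\[
\bm\lambda_0(p)\eqcompset\big(\lambda_0^1(i)\cap F',\;\lambda_0^0(i)\cap F'\big),\qquad \bm\lambda_0(q)\eqcompset\big(\lambda_0^1(j)\cap F',\;\lambda_0^0(j)\cap F'\big),
\]
and $\mu(p)=\mu(i)$, $\mu(q)=\mu(j)$. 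Because $F'\subseteq F_i$ and $F'\subseteq F_j$, both $\bm\lambda_0(p)$ and $\bm\lambda_0(q)$ have domain $\lambda_0^1\cup\lambda_0^0\eqsubset F'$, so it suffices to show $\mu(p)\leq\mu(q)$.

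Next I would read off the ordering from the hypothesis. For $x\in F'$ the value $\chi_{\bm\lambda_0(i)}(x)$ equals $1$ exactly when $x\in\lambda_0^1(i)$ and $0$ when $x\in\lambda_0^0(i)$, so $\chi_{\bm\lambda_0(i)}(x)\leq\chi_{\bm\lambda_0(j)}(x)$ on $F'$ forces $\lambda_0^1(i)\cap F'\subseteq\lambda_0^1(j)\cap F'$, i.e. $\lambda_0^1(p)\subseteq\lambda_0^1(q)$. As $p$ and $q$ share the domain $F'$, this is equivalent to $\lambda_0^0(q)\subseteq\lambda_0^0(p)$, so $\bm\lambda_0(p)<\bm\lambda_0(q)$. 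I would then compute $\bm\lambda_0(q)\wedge\bm\lambda_0(p)$ from the definition of $\wedge$: the $1$-component is $\lambda_0^1(q)\cap\lambda_0^1(p)\eqsubset\lambda_0^1(p)$; in the $0$-component the cross term $\lambda_0^0(q)\cap\lambda_0^1(p)$ vanishes, since $\lambda_0^0(q)\subseteq\lambda_0^0(p)$ and $\lambda_0^0(p)$ is apart from $\lambda_0^1(p)$, while $\lambda_0^0(q)\cap\lambda_0^0(p)\eqsubset\lambda_0^0(q)$, and splitting $\lambda_0^0(p)\subseteq F'\eqsubset\lambda_0^1(q)\cup\lambda_0^0(q)$ along the domain of $q$ and using $\lambda_0^0(q)\subseteq\lambda_0^0(p)$ recombines the remaining pieces into $\lambda_0^0(p)$. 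Hence $\bm\lambda_0(q)\wedge\bm\lambda_0(p)\eqcompset\bm\lambda_0(p)$.

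Finally, since $\bm\lambda$ is an $I$-set of complemented subsets and $\mu$ is a function, equal complemented subsets receive equal measure; together with Remark \ref{basic remarks premeasure space}, which gives $\bm\lambda_0(q\wedge p)\eqcompset\bm\lambda_0(q)\wedge\bm\lambda_0(p)$, this yields $\mu(q\wedge p)=\mu(p)$. The additivity clause of Remark \ref{basic remarks premeasure space} then gives
\[
\mu(q)=\mu(q\wedge p)+\mu(q\sim p)=\mu(p)+\mu(q\sim p)\geq\mu(p),
\]
as $\mu$ takes values in $\mathbb{R}_{\geq 0}$. Therefore $\mu(i)=\mu(p)\leq\mu(q)=\mu(j)$. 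I expect the main obstacle to be the explicit verification $\bm\lambda_0(q)\wedge\bm\lambda_0(p)\eqcompset\bm\lambda_0(p)$ in the third paragraph: one must carefully use the apartness of the $1$- and $0$-parts to discard the cross term and use the shared domain $F'$ to recombine the surviving pieces, keeping the embeddings and the subset equality $\eqsubset$ straight throughout.
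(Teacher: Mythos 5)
Your proposal is correct and follows essentially the same route as the paper: restrict $i$ and $j$ to the common domain $F'$ via Lemma \ref{pre-10.4}, show that the restricted indices satisfy $q\wedge p=_I p$, and conclude by the additivity $\mu(q)=\mu(q\wedge p)+\mu(q\sim p)$ together with nonnegativity of $\mu$. The only cosmetic difference is that the paper verifies $j'\wedge i'=_I i'$ via the identity $\chi_{\bm\lambda_0(i)}\wedge\chi_{\bm\lambda_0(j)}=\chi_{\bm\lambda_0(i)}$ on $F'$, whereas you unwind the definition of $\wedge$ on complemented subsets directly.
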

\begin{proof}
Let $i',j'\in I$ be s.t.
\begin{itemize}
\item $\bm\lambda_0(i')=(\lambda_0^1(i)\cap F',\lambda_0^0(i)\cap F')$
\item $\bm\lambda_0(j')=(\lambda_0^1(j)\cap F',\lambda_0^0(j)\cap F')$
\end{itemize}
They exist by lemma (\ref{pre-10.4}) and we have $\mu(i)=\mu(i')$ and $\mu(j)=\mu(j')$. Since we have $\chi_{\bm\lambda_0(i)}\wedge \chi_{\bm\lambda_0(j)}=\chi_{\bm\lambda_0(i)}$ on $F'$, it follows that $j'\wedge i'=_I i'$. Let $k:=j'\sim i'$, then by (PMS2) we get that
\begin{align*}
\mu(i)=\mu(i')=\mu(j')-\mu(k)\leq \mu(j')=\mu(j)
\end{align*}
\end{proof}

We now want to introduce the set of simple functions over our pre-measure space. To do this we first have to define an appropriate index set, construct the family of simple functions over this index set and show that this family is indeed a set of partial functions.

\begin{definition}\label{simplefunctions}
Let $X$ be a set with an apartness-relation $\neq_X$, $I$ be a set and $\bm{\lambda}=(\lambda_0^1,\lambda_1^1,\mathcal{E}^1,\lambda_0^0,\lambda_1^0,\mathcal{E}^0)$ an $I$-family of complemented subsets of $X$. The set $S(I)$ is the totality $\sum_{n\in\mathbb{N}}(\mathbb{R}\times I)^n$ with the equality:
\begin{align*}
(a_k,i_k)_{k=1}^n =_{S(I)} (b_\ell,j_\ell)_{\ell=1}^m \;:\Leftrightarrow\;\sum_{k=1}^n a_k\cdot\chi_{\bm{\lambda}_0(i_k)}\eqpf\sum_{\ell=1}^m b_\ell\cdot\chi_{\bm{\lambda}_0(j_\ell)}
\end{align*}
We define the $S(I)$-family of (real-valued) partial functions $\bm\Lambda=(\lambda_0,\lambda_1,\mathcal{E},F)$ as follows:
\begin{enumerate}
\item $\lambda_0:S(I)\rightsquigarrow\vnull$ is the the assignment-routine given by
	\begin{align*}
	(a_k,i_k)_{k=1}^n \;\mapsto\; \bigcap_{k=1}^n\big(\lambda_0^1(i_k)\cup\lambda_0^0(i_k)\big)
	\end{align*}
\item $\mathcal{E}:\bigcurlywedge_{v\in S(I)}\mathbb{F}(\lambda_0(v),X)$ is the dependent assignment routine that associates each $v:=(a_k,i_k)_{k=1}^n$ with the embedding $\varepsilon_v:\bigcap_{k=1}^n\big(\lambda_0^1(i_k)\cup\lambda_0^0(i_k)\big)\hookrightarrow X$ induced by the embeddings $\varepsilon_{i_k}^1$ and $\varepsilon_{i_k}^0$
\item $\lambda_1:\bigcurlywedge_{(v,w)\in S(I)}\mathbb{F}(\lambda_0(v),\lambda_0(w))$ is the dependent assignment routine that associates each $(v,w)\in S(I)$ with the function $\lambda_{vw}:\lambda_0(v)\rightarrow\lambda_0(w)$ s.t.
	\begin{itemize}
	\item $\lambda_{vv}:=\id_{\lambda_0(v)}$ and
	\item $(\lambda_{vw},\lambda_{wv})$ witnesses the equality $v=_{S(I)}w$.\footnote{Here we use lemma (\ref{uniquewitness}), i.e. the fact that two functions witnessing an equality of subsets are equal.}
	\end{itemize}
\item $F:\bigcurlywedge_{v\in S(I)}\mathbb{F}(\lambda_0(v),\mathbb{R})$ is the dependent assignment routine that assigns each $v:=(a_k,i_k)_{k=1}^n$ the function $f_v:=\sum_{k=1}^n a_k\cdot\chi_{\bm{\lambda}_0(i_k)}$.
\end{enumerate}
\end{definition}

Note that from the definition of the equality $=_{S(I)}$ it immediatelly follows that $\bm\Lambda$ is an $S(I)$-set of partial functions. Also note that this construction can be done for any $I$-family $\bm\lambda$ of complemented subsets. We now return to the case where we have a pre-measure space $(X,I,\bm\lambda,J,\bm\nu,\mu)$ and want to show that we can define an integral on $S(I)$ that allows us to construct the pre-integration of simple functions. We first have to prove a few lemmas that use the additional structure on the index set $I$ provided by the pre-measure space.

\begin{lemma}\label{pre-7.8}
The assignment routine
\begin{align*}
\mathsf{disjrep}:S(I)&\rightsquigarrow S(I) \\
(a_k,i_k)_{k=1}^n &\mapsto \Big(\sum_{f(k)=1}a_k,\big(\bigwedge_{f(k)=1}i_k\big)\sim\big(\bigvee_{f(k)=0}i_k\big)\Big)_{f:\{1,...,n\}\rightarrow\bm 2}
\end{align*}
is a function and as such identical to $\id_{S(I)}$, i.e. for all $v\in S(I)$ we have $v=_{S(I)}\mathsf{disjrep}(v)$. If for $v\in S(I)$ we have that $\mathsf{disjrep}(v):=(b_\ell,j_\ell)_{\ell=1}^m$ then the $\bm\lambda_0(j_\ell)$ are \emph{disjoint}, i.e. for distinct $\ell$ and $k$ we have $\chi_{\bm\lambda_0(j_k)}\cdot\chi_{\bm\lambda_0(j_\ell)}=0$ on $\big(\lambda_0^1(j_k)\cup\lambda_0^0(j_k)\big)\cap\big(\lambda_0^1(j_\ell)\cup\lambda_0^0(j_\ell)\big)$. For $v\in S(I)$ we call $\mathsf{disjrep}(v)$ the \emph{disjoint representation} of $v$.
\end{lemma}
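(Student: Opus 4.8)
The plan is to reduce all three assertions to the single statement $v =_{S(I)} \mathsf{disjrep}(v)$, which by definition of $=_{S(I)}$ is the equality of partial functions $f_v \eqpf f_{\mathsf{disjrep}(v)}$. Once this is in hand, the claim that $\mathsf{disjrep}$ is a function identical to $\id_{S(I)}$ is immediate: if $v =_{S(I)} w$ then $\mathsf{disjrep}(v) =_{S(I)} v =_{S(I)} w =_{S(I)} \mathsf{disjrep}(w)$ by symmetry and transitivity, so $\mathsf{disjrep}$ respects $=_{S(I)}$, and since $\mathsf{disjrep}(v) =_{S(I)} v = \id_{S(I)}(v)$ for every $v$, the two agree in $\mathbb{F}(S(I),S(I))$. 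Throughout I would work with characteristic functions, using that equal complemented subsets have equal characteristic functions as partial functions.

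The core computation identifies, for each $f:\{1,\dots,n\}\to\bm 2$, the complemented subset attached to $j_f:=\big(\bigwedge_{f(k)=1}i_k\big)\sim\big(\bigvee_{f(k)=0}i_k\big)$. First I would pass from index operations to operations on complemented subsets via Remark \ref{basic remarks premeasure space} (so $\bm\lambda_0(i\wedge j)\eqcompset\bm\lambda_0(i)\wedge\bm\lambda_0(j)$, and similarly for $\vee$ and $\sim$), extended to iterated meets and joins by associativity; then, using the identities $\chi_{\bm A\wedge\bm B}\eqpf\chi_{\bm A}\wedge\chi_{\bm B}$, $\chi_{\bm A\vee\bm B}\eqpf\chi_{\bm A}\vee\chi_{\bm B}$, $\chi_{-\bm A}\eqpf 1-\chi_{\bm A}$ and the fact that $\bm A-\bm B=\bm A\wedge(-\bm B)$, I would obtain
\[
\chi_{\bm\lambda_0(j_f)}\eqpf\Big(\bigwedge_{f(k)=1}\chi_{\bm\lambda_0(i_k)}\Big)\wedge\Big(1-\bigvee_{f(k)=0}\chi_{\bm\lambda_0(i_k)}\Big).
\]
Since each $\chi_{\bm\lambda_0(i_k)}$ has domain $\lambda_0^1(i_k)\cup\lambda_0^0(i_k)$ and the pointwise minimum/maximum operations intersect domains, the right-hand side --- and hence $\lambda_0^1(j_f)\cup\lambda_0^0(j_f)$ --- has domain $\bigcap_{k=1}^n\big(\lambda_0^1(i_k)\cup\lambda_0^0(i_k)\big)=\lambda_0(v)$, independently of $f$ (the edge cases $f^{-1}(1)=\emptyset$ or $f^{-1}(0)=\emptyset$ being covered by the usual conventions for empty meets and joins). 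Consequently the domain of $\mathsf{disjrep}(v)$, being the intersection of these over all $f$, is again $\lambda_0(v)$, so $v$ and $\mathsf{disjrep}(v)$ have equal domains as subsets.

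For the values I would fix $x\in\lambda_0(v)$ and set $g(k):=\chi_{\bm\lambda_0(i_k)}(x)\in\bm 2$. The displayed formula shows that $\chi_{\bm\lambda_0(j_f)}(x)=1$ precisely when $\chi_{\bm\lambda_0(i_k)}(x)=f(k)$ for every $k$, that is, precisely when $f=g$, and $\chi_{\bm\lambda_0(j_f)}(x)=0$ otherwise. Hence exactly one summand of $f_{\mathsf{disjrep}(v)}$ survives at $x$, giving $f_{\mathsf{disjrep}(v)}(x)=\sum_{g(k)=1}a_k=\sum_{k=1}^n a_k\,\chi_{\bm\lambda_0(i_k)}(x)=f_v(x)$. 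This establishes $f_v\eqpf f_{\mathsf{disjrep}(v)}$ and thus $v=_{S(I)}\mathsf{disjrep}(v)$.

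Finally, disjointness is read off from the same formula: distinct positions in the tuple $\mathsf{disjrep}(v)$ correspond to distinct $f\ne f'$, and on the common domain $\big(\lambda_0^1(j_f)\cup\lambda_0^0(j_f)\big)\cap\big(\lambda_0^1(j_{f'})\cup\lambda_0^0(j_{f'})\big)$ the values $\chi_{\bm\lambda_0(j_f)}(x)$ and $\chi_{\bm\lambda_0(j_{f'})}(x)$ cannot both equal $1$, since $f\ne f'$ forces $\chi_{\bm\lambda_0(i_t)}(x)$ to equal two different elements of $\bm 2$ for some $t$; being $\bm 2$-valued, their product therefore vanishes. I expect the main obstacle to be purely bookkeeping: faithfully translating the iterated index operations $\wedge,\vee,\sim$ into the corresponding minimum, maximum and $1-(\cdot)$ operations on characteristic functions while keeping track that every domain in sight collapses to the single subset $\lambda_0(v)$. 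Once the displayed characteristic-function identity is secured, the pointwise counting argument and the disjointness follow at once.
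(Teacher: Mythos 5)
Your proposal is correct and follows essentially the same route as the paper's proof: identify $\bm\lambda_0(j_f)$ as the meet $\big(\bigwedge_{f(k)=1}\bm\lambda_0(i_k)\big)\wedge\big(\bigwedge_{f(k)=0}-\bm\lambda_0(i_k)\big)$ via the operations on the index set and De Morgan, deduce the identity $\sum_k a_k\chi_{\bm\lambda_0(i_k)}\eqpf\sum_f\big(\sum_{f(k)=1}a_k\big)\chi_{\bm\lambda_0(j_f)}$, and get disjointness from the fact that distinct $f,f'$ disagree at some $k$. The paper leaves the middle step as ``easy to verify,'' whereas you spell it out with the pointwise counting argument (exactly one $f$, namely $f=\chi_{\bm\lambda_0(i_\cdot)}(x)$, survives at each $x$), and you make explicit the observation that $\mathsf{disjrep}$ is a function because it agrees pointwise with $\id_{S(I)}$ --- both useful elaborations, but the same proof.
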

\begin{proof}
Let $v:=(a_k,i_k)_{k=1}^n$. For a function $f:\{1,...,n\}\rightarrow\bm 2$ define $j_f\in I$ by
\begin{align*}
j_f:=\Big(\bigwedge_{f(k)=1}i_k\Big)\sim\Big(\bigvee_{f(k)=0}i_k\Big)
\end{align*}
Then
\begin{align*}
\bm\lambda_0(j_f)=\Big(\bigwedge_{f(k)=1}\bm\lambda_0(i_k)\Big)\wedge\Big(\bigwedge_{f(k)=0}-\bm\lambda_0(i_k)\Big)
\end{align*}
Using this it is easy to verify that we have
\begin{align*}
\sum_{k=1}^n a_k\cdot\chi_{\bm\lambda_0(i_k)}\eqpf\sum_{f:\{1,...,n\}\rightarrow\bm 2}\bigg(\sum_{f(k)=1}a_k\bigg)\cdot\chi_{\bm\lambda_0(j_f)}
\end{align*}
Now let $f,g:\{1,...,n\}\rightarrow\bm 2$ be distinct functions, i.e. there is a $k\in\{1,...,n\}$ s.t. $f(k)=1$ but $g(k)=0$ or vice versa. For such a $k$ we get that $\lambda_0^1(j_f)\subseteq\lambda_0^1(i_k)$ and $\lambda_0^1(j_g)\subseteq\lambda_0^0(i_k)$ or vice versa. This implies that $\bm\lambda_0(j_f)$ and $\bm\lambda_0(j_g)$ are disjoint.
\end{proof}

\begin{lemma}\label{pre-7.8.2}
For each $v:=(a_k,i_k)_{k=1}^n\in S(I)$ we have
\begin{align*}
\sum_{k=1}^n a_k\cdot\mu(i_k)=\sum_{f:\{1,...,n\}\rightarrow\bm 2}\bigg(\sum_{f(k)=1}a_k\bigg)\cdot\mu(j_f)
\end{align*}
where $j_f\in I$ is defined for $f:\{1,...,n\}\rightarrow\bm 2$ as in the proof of the previous lemma.
\end{lemma}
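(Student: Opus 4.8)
The plan is to reduce the claim to a per-index additivity statement and then rearrange a finite double sum. Write $F:=\bigcap_{k=1}^n\big(\lambda_0^1(i_k)\cup\lambda_0^0(i_k)\big)$ and recall from lemma (\ref{pre-7.8}) that the complemented subsets $\bm\lambda_0(j_f)$, all of which have domain $F$, are pairwise disjoint. I would first establish, for each fixed $k\in\{1,\ldots,n\}$, the identity
\[
\mu(i_k)=\sum_{f:f(k)=1}\mu(j_f).
\]
Granting this, the lemma follows by substituting into $\sum_k a_k\mu(i_k)$ and interchanging the two finite summations (a finite Fubini step):
\[
\sum_{k=1}^n a_k\mu(i_k)=\sum_{k=1}^n a_k\sum_{f:f(k)=1}\mu(j_f)=\sum_{f:\{1,\ldots,n\}\to\bm 2}\Big(\sum_{f(k)=1}a_k\Big)\mu(j_f),
\]
which is exactly the assertion. (For $n=0$ both sides vanish.)

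The main work is therefore the per-index identity, and it splits into two pieces. First I would prove finite additivity of $\mu$ over disjoint complemented subsets: if $k_1,\ldots,k_r\in I$ are such that the $\bm\lambda_0(k_s)$ are pairwise disjoint with common domain $F$, then $\mu\big(\bigvee_{s=1}^r k_s\big)=\sum_{s=1}^r\mu(k_s)$. This goes by induction on $r$. In the inductive step, writing $K:=\bigvee_{s<r}k_s$, the relation $\chi_{\bm\lambda_0(K)}\eqpf\sum_{s<r}\chi_{\bm\lambda_0(k_s)}$ on $F$ gives $\chi_{\bm\lambda_0(K\wedge k_r)}\eqpf\chi_{\bm\lambda_0(K)}\cdot\chi_{\bm\lambda_0(k_r)}\eqpf 0$, so the $1$-component $\lambda_0^1(K\wedge k_r)$ is empty and lemma (\ref{pre-10.2}) yields $\mu(K\wedge k_r)=0$; then (PMS1) gives $\mu(K\vee k_r)=\mu(K)+\mu(k_r)-\mu(K\wedge k_r)=\mu(K)+\mu(k_r)$, closing the induction.

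Second, I would identify the join $\bigvee_{f:f(k)=1}j_f$ with the restriction of $i_k$ to $F$. Specializing the characteristic-function identity established in the proof of lemma (\ref{pre-7.8}) to the coefficients $a_{k'}=1$ for $k'=k$ and $a_{k'}=0$ otherwise, one obtains $\chi_{\bm\lambda_0(i_k)}\eqpf\sum_{f:f(k)=1}\chi_{\bm\lambda_0(j_f)}$; since disjointness makes the join's characteristic function the sum of the summands', the complemented subset $\bm\lambda_0\big(\bigvee_{f:f(k)=1}j_f\big)$ equals $\big(\lambda_0^1(i_k)\cap F,\lambda_0^0(i_k)\cap F\big)$. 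By lemma (\ref{pre-10.4}) an index with this complemented subset has measure $\mu(i_k)$, and since $\bm\lambda$ is an $I$-set of complemented subsets the value of $\mu$ depends only on the complemented subset; hence $\mu\big(\bigvee_{f:f(k)=1}j_f\big)=\mu(i_k)$. Combining this with the disjoint additivity from the previous step gives $\mu(i_k)=\sum_{f:f(k)=1}\mu(j_f)$, as required.

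I expect the main obstacle to be the bookkeeping around the common domain $F$: the join of the $j_f$ only agrees with $i_k$ after $i_k$ is cut down to $F$, so one must invoke lemma (\ref{pre-10.4}) to see that this restriction leaves the measure unchanged, and one must keep all of the $\bm\lambda_0(j_f)$ on the single domain $F$ so that their disjointness, together with the vanishing of the pairwise meets via lemma (\ref{pre-10.2}), can be read off directly from the characteristic functions. Everything else is routine constructive manipulation of finite sums and of the algebra of complemented subsets.
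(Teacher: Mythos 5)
Your proposal is correct and follows essentially the same route as the paper: reduce to the per-index identity $\mu(i_k)=\sum_{f:f(k)=1}\mu(j_f)$ via a finite interchange of sums, obtain it by identifying $\bigvee_{f(k)=1}j_f$ with $i_k$ restricted to $F$ (using lemma (\ref{pre-10.4})) and by finite additivity of $\mu$ over disjoint complemented subsets (via lemma (\ref{pre-10.2}) and (PMS1)). You merely make the induction behind the finite additivity explicit and derive the identification of the join from the characteristic-function identity rather than by checking memberships directly, which are presentational rather than substantive differences.
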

\begin{proof}
Let $F:=\bigcap_{k=1}^n(\lambda_0^1(i_k)\cup\lambda_0^0(i_k))$, without loss of generality we can assume that for every $k\in\{1,...,n\}$ we have $\lambda_0^1(i_k)\cup\lambda_0^0(i_k)\subseteq F$ since otherwise we can consider $i_k'\in I$ s.t. $\bm\lambda_0(i_k')=(\lambda_0^1(i_k)\cap F,\lambda_0^0(i_k)\cap F)$. By lemma (\ref{pre-10.4}) we have that $\mu(i_k)=\mu(i_k')$ for each $k\in\{1,...,n\}$ and since for each $f:\{1,...,n\}\rightarrow\bm 2$ the domain of $\chi_{\bm\lambda_0(j_f)}$ is $F$ (with $j_f\in I$ as in the proof of lemma (\ref{pre-7.8})), we have that
\begin{align*}
\bm\lambda_0(j_S)=\Big(\bigwedge_{f(k)=1}\bm\lambda_0(i_k)\Big)\wedge\Big(\bigwedge_{f(k)=0}-\bm\lambda_0(i_k)\Big)=\Big(\bigwedge_{f(k)=1}\bm\lambda_0(i_k')\Big)\wedge\Big(\bigwedge_{f(k)=0}-\bm\lambda_0(i_k')\Big)
\end{align*}
For $k\in\{1,...,n\}$, let $P(k)$ be the set of functions $f:\{1,...,n\}\rightarrow\bm 2$ s.t. $f(k)=1$. Note that for each $x\in F$ we have
\begin{align*}
x\in\lambda_0^1(i_k) \; &\Leftrightarrow\; x\in\bigcup_{f\in P(k)}\lambda_0^1(j_f) \text{ and}\\
x\in\lambda_0^0(i_k) \; &\Leftrightarrow\; x\in\bigcap_{f\in P(k)}\lambda_0^0(j_f)
\end{align*}
It follows that $\chi_{\bm\lambda_0(i_k)}=\bigvee_{f\in P(k)}\chi_{\bm\lambda_0(j_f)}$ on $F$ and since $\lambda_0^1(i_k)\cup\lambda_0^0(i_k)\subseteq F$ we get that $\bm\lambda_0(i_k)=\bigvee_{f\in P(k)}\bm\lambda_0(j_f)$. 

Furthermore, note that for any $i,j\in I$ s.t. $\bm\lambda_0(i)$ and $\bm\lambda_0(j)$ are disjoint we have that $\lambda_0^1(i)\cap\lambda_0^1(j)=\emptyset$ and thus by lemma (\ref{pre-10.2}) it follows that $\mu(i\wedge j)=0$. Hence by (PMS1) $\mu(i_k)=\sum_{f\in P(k)}\mu(j_f)$. Using this it is easy to see that
\begin{align*}
\sum_{k=1}^n a_k\cdot\mu(i_k)&=\sum_{k=1}^n a_k\cdot\Big(\sum_{f\in P(k)}\mu(j_f)\Big) \\[1em]
&=\sum_{f:\{1,...,n\}\rightarrow\bm 2}\Big(\sum_{k\text{ s.t. } f\in P(k)}a_k\Big)\cdot\mu(j_f) \\[1em]
&=\sum_{f:\{1,...,n\}\rightarrow\bm 2}\Big(\sum_{f(k)=1}a_k\Big)\cdot\mu(j_f)
\end{align*}
\end{proof}

\begin{lemma}\label{pre-10.6}
Let $v:=(a_k,i_k)_{k=1}^n,w:=(b_\ell,j_\ell)_{\ell=1}^m\in S(I)$ be s.t. 
\begin{align*}
\sum_{k=1}^n a_k\cdot\chi_{\bm\lambda_0(i_k)}(x)\leq\sum_{\ell=1}^m b_\ell\cdot\chi_{\bm\lambda_0(j_\ell)}(x)
\end{align*}
for all $x\in F:=\Big(\bigcap_{k=1}^n(\lambda_0^1(i_k)\cup\lambda_0^0(i_k))\Big)\cap\Big(\bigcap_{\ell=1}^m(\lambda_0^1(j_\ell)\cup\lambda_0^0(j_\ell))\Big)$, then
\begin{align*}
a:=\sum_{k=1}^n a_k\cdot\mu(i_k)\leq b:=\sum_{\ell=1}^m b_\ell\cdot\mu(j_\ell)
\end{align*}
\end{lemma}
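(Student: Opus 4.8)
The plan is to compare the two simple functions on a common disjoint refinement of all the complemented subsets involved, and to reduce the inequality to a term-by-term sign argument on the pieces of that refinement. First I would form the combined list $i_1,\dots,i_n,j_1,\dots,j_m$ and, following the construction in the proof of Lemma \ref{pre-7.8}, attach to each pair of functions $g\colon\{1,\dots,n\}\to\bm 2$ and $h\colon\{1,\dots,m\}\to\bm 2$ an index $p_{g,h}\in I$ whose associated complemented subset is $\big(\bigwedge_{g(k)=1}\bm\lambda_0(i_k)\big)\wedge\big(\bigwedge_{g(k)=0}-\bm\lambda_0(i_k)\big)\wedge\big(\bigwedge_{h(\ell)=1}\bm\lambda_0(j_\ell)\big)\wedge\big(\bigwedge_{h(\ell)=0}-\bm\lambda_0(j_\ell)\big)$. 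By Lemma \ref{pre-7.8} these $\bm\lambda_0(p_{g,h})$ are pairwise disjoint and their $1$-parts partition $F$.

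Next I would express both $a$ and $b$ over this single partition. Applying Lemma \ref{pre-7.8.2} to the padded elements $\tilde v:=(a_1,i_1),\dots,(a_n,i_n),(0,j_1),\dots,(0,j_m)$ and $\tilde w:=(0,i_1),\dots,(0,i_n),(b_1,j_1),\dots,(b_m,j_m)$ — which use the same combined list and hence the same pieces $p_{g,h}$ — gives $a=\sum_{g,h}\big(\sum_{g(k)=1}a_k\big)\mu(p_{g,h})$ and $b=\sum_{g,h}\big(\sum_{h(\ell)=1}b_\ell\big)\mu(p_{g,h})$. Subtracting yields $b-a=\sum_{g,h}c_{g,h}\,\mu(p_{g,h})$ with $c_{g,h}:=\sum_{h(\ell)=1}b_\ell-\sum_{g(k)=1}a_k$. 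Since each $\mu(p_{g,h})\geq 0$, it then suffices to show $c_{g,h}\,\mu(p_{g,h})\geq 0$ for every pair $(g,h)$.

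The crux, and the step I expect to be the main obstacle, is that I cannot constructively decide whether a given piece $p_{g,h}$ is inhabited or empty, so a naive case split is unavailable. Instead I would establish $c_{g,h}\,\mu(p_{g,h})\geq 0$ by refuting $c_{g,h}\,\mu(p_{g,h})<0$: this forces $\mu(p_{g,h})>0$ (the product is apart from $0$ while $\mu(p_{g,h})\geq 0$), and then, exactly as in the proof of Lemma \ref{pre-10.2} via (PMS4) applied to the constant sequence $n\mapsto p_{g,h}$, the set $\lambda_0^1(p_{g,h})$ is inhabited. Any witness $x$ lies in $F$ and satisfies $\chi_{\bm\lambda_0(i_k)}(x)=g(k)$ and $\chi_{\bm\lambda_0(j_\ell)}(x)=h(\ell)$, so $f_v(x)=\sum_{g(k)=1}a_k$ and $f_w(x)=\sum_{h(\ell)=1}b_\ell$; the hypothesis $f_v(x)\leq f_w(x)$ then gives $c_{g,h}\geq 0$, contradicting $c_{g,h}\,\mu(p_{g,h})<0$. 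Summing the resulting nonnegative terms gives $b-a\geq 0$, which is the claim.
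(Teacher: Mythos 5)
Your proof is correct and takes essentially the same route as the paper's: both pass to the common disjoint refinement via Lemma \ref{pre-7.8.2} (after padding with zero coefficients) and both use (PMS4), through the argument of Lemma \ref{pre-10.2}, to produce an inhabitant of a piece of positive measure at which the pointwise hypothesis is evaluated. The only difference is bookkeeping at the end: the paper assumes $a>b$ and extracts a single term with both factors positive via (2.16) of Bishop--Bridges to reach one global contradiction, whereas you refute negativity of each term of $b-a$ separately; both are constructively sound.
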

\begin{proof}
Without loss of generality we can assume that $n=m$ and for each $k\in\{1,...,n\}$ we have $i_k=_I j_k$, because otherwise we can add appropriate indices with coefficient $0$ to both functions respectively. Now, assume that $a>b$, i.e. that $\sum_{k=1}^n(a_k-b_k)\cdot\mu(\bm\lambda_0(i_k))>0$. By lemma (\ref{pre-7.8.2}) we get that
\begin{align*}
\sum_{f:\{1,...,n\}\rightarrow\bm 2}\Big(\sum_{f(k)=1}a_k-\sum_{f(k)=1}b_k\Big)\cdot\mu(j_f)>0
\end{align*}
By (2.16) of \cite{BB85} there is a function $f:\{1,...,n\}\rightarrow\bm 2$ s.t. $\Big(\sum_{f(k)=1}a_k-\sum_{f(k)=1}b_k\Big)>0$ and $\mu(j_f)>0$. From the proof of lemma (\ref{pre-10.2}) it follows that there is an $x\in\lambda_0^1(j_f)\subseteq F$. For each $k\in\{1,...,n\}$ we get that
\begin{align*}
\chi_{\bm\lambda_0(i_k)}(x)=1 \;\Leftrightarrow\; f(k)=1
\end{align*}
and thus
\begin{align*}
\sum_{f(k)=1}a_k=\sum_{k=1}^n a_k\cdot\chi_{\bm\lambda_0(i_k)}(x)\leq\sum_{k=1}^n b_k\cdot\chi_{\bm\lambda_0(i_k)}(x)=\sum_{f(k)=1}b_k
\end{align*}
which is a contradiction. Hence $a\leq b$.
\end{proof}
We can now define the integral on the simple functions and show that it is indeed a function.

\begin{lemma}\label{pre-10.7}
The assignment-routine
\begin{align*}
\int\_ d\mu:S(I)&\rightsquigarrow\mathbb{R} \\
(a_k,i_k)_{k=1}^n &\mapsto \sum_{k=1}^n a_k\cdot\mu(i_k)
\end{align*}
is a function.
\end{lemma}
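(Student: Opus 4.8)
The plan is to verify that the assignment routine $\int\_\,d\mu$ respects the equality $=_{S(I)}$, which by the definition of a function between sets is exactly what has to be shown. So I would take arbitrary $v:=(a_k,i_k)_{k=1}^n$ and $w:=(b_\ell,j_\ell)_{\ell=1}^m$ in $S(I)$ with $v=_{S(I)}w$ and aim to prove $\sum_{k=1}^n a_k\cdot\mu(i_k)=_{\mathbb{R}}\sum_{\ell=1}^m b_\ell\cdot\mu(j_\ell)$.

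By the definition of $=_{S(I)}$, the hypothesis $v=_{S(I)}w$ means precisely that the two simple functions agree as partial functions, i.e.\ $\sum_{k=1}^n a_k\cdot\chi_{\bm\lambda_0(i_k)}\eqpf\sum_{\ell=1}^m b_\ell\cdot\chi_{\bm\lambda_0(j_\ell)}$. Unwinding the equality of partial functions via its witnessing maps, this yields pointwise agreement on the common domain $F:=\big(\bigcap_{k=1}^n(\lambda_0^1(i_k)\cup\lambda_0^0(i_k))\big)\cap\big(\bigcap_{\ell=1}^m(\lambda_0^1(j_\ell)\cup\lambda_0^0(j_\ell))\big)$; that is, for every $x\in F$ we have $\sum_{k=1}^n a_k\cdot\chi_{\bm\lambda_0(i_k)}(x)=\sum_{\ell=1}^m b_\ell\cdot\chi_{\bm\lambda_0(j_\ell)}(x)$, so in particular both inequalities $\leq$ and $\geq$ hold throughout $F$.

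This is exactly the setup of Lemma (\ref{pre-10.6}), which I would now invoke twice. Applying it to the pair $(v,w)$ gives $\sum_{k=1}^n a_k\cdot\mu(i_k)\leq\sum_{\ell=1}^m b_\ell\cdot\mu(j_\ell)$, and applying it to $(w,v)$ gives the reverse inequality. Combining the two yields the desired equality, so $\int\_\,d\mu$ is a function.

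The argument is essentially immediate once Lemma (\ref{pre-10.6}) is available, so there is no genuine obstacle in the present statement; the real work was already carried out in establishing that lemma (and, before it, Lemmas (\ref{pre-7.8}) and (\ref{pre-7.8.2})). The only point demanding a little care is the translation in the second step: I must check that the equality of partial functions really delivers \emph{pointwise} equality of the two sums on the intersection $F$ of their domains, and not merely equality of the domains as subsets. This uses that the induced embeddings identify $F$ with a common subset of $X$ on which both representatives take equal values, which is precisely what the commuting triangles in the definition of $\eqpf$ provide.
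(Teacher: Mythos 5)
Your proof is correct and follows the paper's own argument exactly: both unwind $v=_{S(I)}w$ into pointwise agreement of $f_v$ and $f_w$ on the intersection of their domains and then apply Lemma (\ref{pre-10.6}) in both directions to get the two inequalities whose conjunction yields the equality of the integrals. The extra remark about checking that $\eqpf$ really gives pointwise equality on $F$ is a sensible precaution but does not change the route.
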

\begin{proof}
Let $v:=(a_k,i_k)_{k=1}^n,w:=(b_\ell,j_\ell)_{\ell=1}^m\in S(I)$ be s.t. $v=_{S(I)}w$ (i.e. $f_v\eqpf f_w$) and let $F:=\lambda_0(v)\cap\lambda_0(w)$. We thus have $f_v\leq f_w$ on $F$ and $f_v\geq f_w$ on $F$, and thus by lemma (\ref{pre-10.6}) 
\begin{align*}
\Big(\int v \;d\mu\leq\int w \;d\mu \;\;\&\;\int v \;d\mu\geq\int w \;d\mu\Big) \;\Rightarrow\;\int v\; d\mu=\int w \;d\mu
\end{align*}
\end{proof}

It remains to show that $(X,S(I),\bm\Lambda,\int\_d\mu)$ is a pre-integration space. For this we need two more lemmas. Since we don't want to use countable choice, the next lemma and its proof is a bit more involved than lemma 10.8 in \cite[p. 284]{BB85}.

\begin{lemma}\label{pre-10.8}
Let $S^+(I):=\{v\in S(I): f_v\geq0\}$, then there is a function ${\phi:\mathbb{N}\rightarrow S^+(I)\rightarrow I}$ with $N\mapsto (\phi_N:S^+(I)\rightarrow I)$, s.t. for all $N\in\mathbb{N}$ and $v\in S(I)$ we have
\begin{itemize}
\item $\lambda_0^1(\phi_N(v))\cup\lambda_0^0(\phi_N(v))\subseteq\lambda_0(v)$
\item $\forall x\in \lambda_0^0(\phi_N(v)):\; f_v(x)<N^{-1}$
\item $\mu(\phi_N(v))\leq 2N\int v\;d\mu$
\end{itemize}
\end{lemma}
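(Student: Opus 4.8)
The plan is to build $\phi_N(v)$ explicitly from a disjoint representation of $v$ together with a single modulus-based comparison of reals, so that no countable choice is invoked. Fix $N\in\mathbb{N}$ and a representative $v:=(a_k,i_k)_{k=1}^n\in S^+(I)$. As in the proof of Lemma \ref{pre-7.8}, for every $f:\{1,\dots,n\}\to\bm 2$ set $j_f:=\big(\bigwedge_{f(k)=1}i_k\big)\sim\big(\bigvee_{f(k)=0}i_k\big)\in I$ and $b_f:=\sum_{f(k)=1}a_k\in\mathbb{R}$, so that $f_v\eqpf\sum_f b_f\cdot\chi_{\bm\lambda_0(j_f)}$, the complemented subsets $\bm\lambda_0(j_f)$ are pairwise disjoint, and each has domain $F:=\lambda_0(v)=\bigcap_{k=1}^n(\lambda_0^1(i_k)\cup\lambda_0^0(i_k))$. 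Since $(2N)^{-1}<N^{-1}$ and each $b_f$ carries a modulus, cotransitivity of $<$ lets us decide, by one fixed procedure, either $b_f>(2N)^{-1}$ or $b_f<N^{-1}$; let $P$ be the (decidable, choice-free) collection of those $f$ for which the first alternative is selected. I would then set $\phi_N(v):=\bigvee_{f\in P}j_f$, interpreting the empty join via Lemma \ref{pre-10.3} as an index with $\bm\lambda_0=(\emptyset,F)$.

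Next I would verify the three properties. For the first, each $\bm\lambda_0(j_f)$ has domain $F$, hence so does the join, giving $\lambda_0^1(\phi_N(v))\cup\lambda_0^0(\phi_N(v))\subseteq F=\lambda_0(v)$. For the second, the positive parts $\lambda_0^1(j_f)$ partition $F$ (each $x\in F$ lies in exactly the one indexed by $f(k)=\chi_{\bm\lambda_0(i_k)}(x)$), so a point $x\in\lambda_0^0(\phi_N(v))=\bigcap_{f\in P}\lambda_0^0(j_f)$ must sit in $\lambda_0^1(j_g)$ for some $g\notin P$, whence $f_v(x)=b_g<N^{-1}$. For the third, disjointness together with Lemma \ref{pre-10.2} and (PMS1) gives $\mu(\phi_N(v))=\sum_{f\in P}\mu(j_f)$, exactly as in Lemma \ref{pre-7.8.2}; combining this with Lemma \ref{pre-7.8.2} itself yields
\[
\int v\,d\mu=\sum_f b_f\,\mu(j_f)\;\geq\;\sum_{f\in P} b_f\,\mu(j_f)\;\geq\;\tfrac{1}{2N}\sum_{f\in P}\mu(j_f)=\tfrac{1}{2N}\,\mu(\phi_N(v)),
\]
i.e. $\mu(\phi_N(v))\leq 2N\int v\,d\mu$. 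Dropping the terms with $f\notin P$ is legitimate because each is nonnegative: if $b_f\mu(j_f)<0$ then $\mu(j_f)>0$, and running the argument of Lemma \ref{pre-10.2} (applying (PMS4) to the constant sequence $j_f$) produces an $x\in\lambda_0^1(j_f)$ with $f_v(x)=b_f<0$, contradicting $v\in S^+(I)$.

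The comparison at each level $N$ is a single modulus-based computation on the finitely many reals $b_f$, so the assignment $N\mapsto\phi_N(v)$ is an honest function of $N$ with no appeal to countable choice; this choice-free uniformity in $N$ (for a fixed representative $v$) is precisely what the later verification of (PIS2) for $S(I)$ needs, and is where the proof genuinely improves on Lemma 10.8 of \cite{BB85}. The step I expect to be the \emph{main obstacle} is upgrading this to a genuine function on $S^+(I)$, i.e. proving $v=_{S(I)}w\Rightarrow\phi_N(v)=_I\phi_N(w)$. Because $\bm\lambda$ is an $I$-\emph{set} of complemented subsets, this reduces to $\bm\lambda_0(\phi_N(v))\eqcompset\bm\lambda_0(\phi_N(w))$, and the difficulty is real: the selected set $P$ can depend on the representative whenever some $b_f$ lands in the overlap region $\big((2N)^{-1},N^{-1}\big)$, where the approximate comparison is not extensional in the value of $f_v$.

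To close this gap I would exploit the $2N$-slack and the fact that all admissible outputs are pinned down away from the overlap: the positive part must lie between the $=_X$-extensional subsets $\{x\in F:f_v(x)\geq N^{-1}\}$ and $\{x\in F:f_v(x)>(2N)^{-1}\}$, and the negative part symmetrically below $(2N)^{-1}$, so two admissible indices can differ only on pieces where $f_v$ is certifiably in the middle band. I would then try to make the borderline decisions depend only on $=_X$-extensional data (for instance on the separated subsets above, which are determined by $f_v$ alone) rather than on the chosen coefficients $b_f$, so that equal partial functions force the same complemented subset. Making this last comparison fully rigorous — as opposed to the essentially routine verification of the three displayed conditions — is the crux; failing a clean representative-independent rule, one falls back on the weaker but already sufficient statement that for each fixed representative the family $(\phi_N(v))_{N\in\mathbb{N}}$ is produced without countable choice.
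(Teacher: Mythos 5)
Your construction and your verification of the three displayed properties follow essentially the same route as the paper: pass to the disjoint representation, split its coefficients into those certified $>(2N)^{-1}$ and those certified $<N^{-1}$ by the constructive approximate comparison (corollary (2.17) of \cite{BB85}), take the positive part of $\phi_N(v)$ to be the join of the pieces in the first class restricted to $\lambda_0(v)$, and estimate $\mu(\phi_N(v))$ via disjointness and Lemma \ref{pre-7.8.2}. That part of the argument is sound and matches the paper in substance.

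The genuine gap is precisely the one you flag yourself and then leave open: the lemma asserts that $\phi_N$ is a \emph{function} on $S^+(I)$, and your construction does not deliver this. Your set $P$ records which branch an approximate comparison happened to take, so for a coefficient $b_f$ in the overlap band $\big((2N)^{-1},N^{-1}\big)$ with $\lambda_0^1(j_f)$ inhabited, one representative may put that piece into the positive part of $\phi_N(v)$ and an equal representative $w=_{S(I)}v$ may put it into the negative part; then $\bm\lambda_0(\phi_N(v))\eqcompset\bm\lambda_0(\phi_N(w))$ fails, and since $\bm\lambda$ is an $I$-set of complemented subsets so does $\phi_N(v)=_I\phi_N(w)$. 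Your fallback statement (choice-free uniformity in $N$ for a fixed representative) is strictly weaker than the lemma. The paper closes the gap with a maximality device absent from your proposal: among the finitely many admissible flag-pairs $(f,g)$ (those with $f(k)\vee g(k)=1$, $f(k)=1\Rightarrow a_k<N^{-1}$, $g(k)=1\Rightarrow a_k>(2N)^{-1}$) it takes the pointwise maxima $\varphi_v,\psi_v$. With this choice $\psi_v(k)=1$ exactly when $a_k>(2N)^{-1}$ \emph{holds}, a condition on the value of the coefficient alone rather than on the outcome of an approximate test, and this is what drives the extensionality argument: if $f_v\eqpf f_w$ and $x\in\lambda_0^1(\phi_N(v))$, then $f_v(x)=a_k>(2N)^{-1}$ forces the matching coefficient $b_\ell$ of $\mathsf{disjrep}(w)$ to exceed $(2N)^{-1}$, hence $\psi_w(\ell)=1$ by maximality and $x\in\lambda_0^1(\phi_N(w))$. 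Your sketched repair (basing borderline decisions on the extensional subsets $\{x: f_v(x)>(2N)^{-1}\}$) points in the same direction but is not carried out, so as written the proposal does not prove the statement.
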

\begin{proof}
Fix $N\in\mathbb{N}$ and let $v\in S^+(I)$ and $\mathsf{disjrep}(v):=(a_k,i_k)_{k=1}^n$. Then $a_k\geq0$ for all $k=1,...,n$. Let $f,g:\{1,...,n\}\rightarrow\bm 2$ be functions s.t.
\begin{enumerate}
\item $\forall k\in\{1,...,n\}:\;f(k)\vee g(k)=1$
\item $f(k)=1 \;\Rightarrow\; a_k<N^{-1}$
\item $g(k)=1 \;\Rightarrow\; a_k>(2N)^{-1}$
\end{enumerate}
Such $f$ and $g$ exist by corollary (2.17) of \cite{BB85}. We have that
\begin{align*}
\Big(\forall k\in\{1,...,n\}:\;f(k)=1\Big)\quad\text{or}\quad\Big(\exists k\in\{1,...,n\}\text{ s.t. }g(k)=1\Big)
\end{align*}
Now assume we have another pair of functions $f',g':\{1,...,n\}\rightarrow\bm 2$ s.t. the conditions (i)-(iii) hold. Let $f'':=f\vee f'$ and $g'':=g\vee g'$. It is easy to verify that $f'',g''$ also fulfill (i)-(iii). 

Since there are only finitely many pairs of functions $\{1,...,n\}\rightarrow\bm 2$ s.t. (i)-(iii) hold and they are closed under taking (pointwise) maxima we can consider the \emph{maximal} functions $\varphi_v,\psi_v:\{1,...,n\}\rightarrow\bm 2$ s.t. (i)-(iii) hold. In particular they will be given by
\begin{align*}
\Bigg(\varphi_v=\bigvee_{\substack{\exists g:\{1,...,n\}\rightarrow\bm 2\\ (f,g)\text{ fulfill (i)-(iii)}}}f\Bigg) \quad\quad \&\quad\quad
\Bigg(\psi_v=\bigvee_{\substack{\exists f:\{1,...,n\}\rightarrow\bm 2\\ (f,g)\text{ fulfill (i)-(iii)}}}g\Bigg)
\end{align*}
Now, let
\begin{align*}
\phi_N(v):=\begin{cases}
\bigvee_{k=1}^n(i_k\sim i_k), \text{ if } \forall k\in\{1,...,n\}:\;\varphi_v(k)=1 \\
\Big(\bigvee_{\psi_v(k)=1}i_k\Big)\sim\Big(\bigvee_{k=1}^n(i_k\sim i_k)\Big),\text{ else}
\end{cases}
\end{align*}
Notice that this is a valid case distinction. In order to verify that $\phi_N(v)$ does indeed satisfy the three conditions of the lemma, observe that $\bm\lambda_0(\bigvee_{k=1}^n(i_k\sim i_k))=(\emptyset,\lambda_0(v))$ and for $i_v:=\bigvee_{\psi_v(k)=1}i_k$ we have
\begin{align*}
\bm\lambda_0\big(i_v\sim\bigvee_{k=1}^n(i_k\sim i_k)\big)=\big(\lambda_0^1(i_v)\cap\lambda_0(v),\lambda_0^0(i_v)\cap\lambda_0(v)\big)
\end{align*}
First assume that $\Big(\forall k\in\{1,...,n\}:\;\varphi_v(k)=1\Big)$. Then for $j:= \bigvee_{k=1}^n(i_k\sim i_k)$ we have $\phi_N(v)=_I j$ and clearly $\lambda_0^1(j)\cup\lambda_0^0(j)\subseteq\lambda_0(v)$ and $\big(\forall x\in\lambda_0^0(j):\; f_v(x)<N^{-1}\big)$. Note that $(2N)^{-1}\cdot\chi_{\bm\lambda_0(j)}\leq f_v$ on $\lambda_0(v)$. From this, the third property follows by lemma (\ref{pre-10.6}).

If $\Big(\exists k\in\{1,...,n\}:\;\varphi_v(k)=0\Big)$, i.e. $\psi_v(k)=1$, let $j:=i_v\sim\bigvee_{k=1}^n(i_k\sim i_k)$. Then we have $\phi_N(v)=_I j$ and clearly $\lambda_0^1(j)\cup\lambda_0^0(j)\subseteq\lambda_0(v)$. Let $x\in\lambda_0^0(j)$. Then $x\in\bigcap_{\psi_v(k)=1}\lambda_0^0(i_k)$ and thus $f_v(x)=0$ or there is a $k\in \{1,...,n\}$ with $\varphi_v(k)=1$ s.t. $x\in\lambda_0^1(i_k)$, in which case $f_v(x)=a_k<N^{-1}$ since the $\bm\lambda_0(i_k)$ are disjoint. In either case the second condition is fulfilled.

Using lemma (\ref{pre-10.4}) and the fact that the $\bm\lambda_0(i_k)$ are disjoint we get that
\begin{align*}
\mu(j)=\mu(i_v)=\sum_{\psi_v(k)=1}\mu(i_k)
\leq\sum_{\psi_v(k)=1}(2N)^{-1}\cdot a_k\cdot\mu(i_k)\leq (2N)^{-1}\int v\;d\mu
\end{align*}
Here the last inequality follows from lemma (\ref{pre-10.6}) using the fact that on $\lambda_0(v)$ we have $\sum_{\psi_v(k)=1}a_k\cdot\chi_{\bm\lambda_0(i_k)}\leq f_v$. 

It remains to check that $\phi_N$ is a function. So let $w\in S^+(I)$ s.t. $v=_{S(I)}w$ (i.e. $f_v\eqpf f_w$) and $\mathsf{disjrep}(w):=(b_\ell,j_\ell)_{\ell=1}^m$. Since $\bm\lambda$ is an $I$-set of complemented subsets it suffices to check that $\bm\lambda_0(\phi_N(v))\eqcompset\bm\lambda_0(\phi_N(w))$. 

Note that by the above remarks $\chi_{\bm\lambda_0(\phi_N(v))}$ and $\chi_{\bm\lambda_0(\phi_N(w))}$ have the same domain $\lambda_0(v)\eqsubset\lambda_0(w)$. Now, assume that for $x\in\lambda_0(v)$ we have $\chi_{\bm\lambda_0(\phi_N(v))}(x)=1$, i.e. $\lambda_0^1(\phi_N(v))$ is inhabited and we have $x\in\lambda_0^1(i_k)$ for some $k\in\{1,...,n\}$ with $\psi_v(k)=1$. Thus $f_v(x)=a_k>(2N)^{-1}>0$ and since $f_v=f_w$ we must also have $x\in\lambda_0^1(j_\ell)$ for some $\ell=1,...,m$. Moreover, $b_\ell=a_k>(2N)^{-1}$ and hence $\psi_w(\ell)=1$ by the maximality of $\psi_w$. It follows that $x\in\lambda_0^1(i_w)$ where $i_w:=\bigvee_{\psi_w(\ell)=1}j_\ell$.

Also note that since $\lambda_0^1(\phi_N(v))$ is inhabited there is a $k=1,...,n$ s.t. $\varphi_v(k)=0$ and hence we get that $a_k\geq N^{-1}$ and $\lambda_0^1(i_k)$ is inhabited by the maximality of $\varphi_v$. Thus, since $f_v=f_w$, there is an $\ell=1,...,m$ s.t. $b_\ell\geq N^{-1}$ and $\lambda_0^1(j_\ell)$ is inhabited, i.e. there is a $\ell=1,...,m$ s.t. $\varphi_w(\ell)=0$ and  $\phi_N(w)=i_w\sim\bigvee_{\ell=1}^m(j_\ell\sim j_\ell)$. It follows that $\chi_{\bm\lambda_0(\phi_N(w))}(x)=1$. Repeating the argument for $x\in\lambda_0(w)$ with  $\chi_{\bm\lambda_0(\phi_N(w))}(x)=1$ we can conclude that
\begin{align*}
\Big(\lambda_0^1(\phi_N(v))\cup\lambda_0^0(\phi_N(v)\Big)=\lambda_0(v)&=\lambda_0(w)=\Big(\lambda_0^1(\phi_N(w))\cup\lambda_0^0(\phi_N(w)\Big) \\
\&\quad\quad\forall x\in\lambda_0(v):\; \chi_{\bm\lambda_0(\phi_N(v))}(x)=1\;&\Leftrightarrow\;\chi_{\bm\lambda_0(\phi_N(w))}(x)=1
\end{align*}
which finishes the proof.
\end{proof}

\begin{lemma}\label{pre-10.9}
Let $v:=(a_k,i_k)_{k=1}^n\in S(I)$ and $c>0$ be s.t. $f_v\leq c$ on $\lambda_0(v)$. Let $i\in I$ be s.t. $f_v\leq 0$ on $\lambda_0^0(i)\cap\lambda_0(v)$, then for every $\varepsilon>0$ there exists a $j\in I$ s.t.
\begin{enumerate}
\item $\lambda_0^1(j)\cup\lambda_0^0(j)\subseteq\lambda_0(v)$
\item $\forall x\in\lambda_1^0(j):\; f_v(x)>\varepsilon$
\item $\mu(j)\geq c^{-1}\big(\int v\;d\mu - 2\varepsilon\mu(i)\big)$
\end{enumerate}
\end{lemma}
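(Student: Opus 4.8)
The plan is to mirror the strategy of Lemma~\ref{pre-10.8}: pass to a disjoint representation, perform a fuzzy threshold split of the coefficients via corollary (2.17) of \cite{BB85}, and then bound the integral by a single monotonicity estimate through Lemma~\ref{pre-10.6}, which lets us avoid any case split on the signs of the $a_k$.

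First I would reduce to a convenient normal form. Replacing $v$ by $\mathsf{disjrep}(v)=(a_k,i_k)_{k=1}^n$ (Lemma~\ref{pre-7.8}) we may assume the $\bm\lambda_0(i_k)$ are disjoint, so that $f_v\eqpf\sum_{k=1}^n a_k\cdot\chi_{\bm\lambda_0(i_k)}$ takes the value $a_k$ on $\lambda_0^1(i_k)$ and the value $0$ off every $\lambda_0^1(i_k)$; since $\mathsf{disjrep}(v)=_{S(I)}v$, neither the hypotheses nor $\int v\,d\mu$ are affected. Using Lemma~\ref{pre-10.4} I would further assume without loss of generality that $\lambda_0^1(i_k)\cup\lambda_0^0(i_k)\subseteq\lambda_0(v)$ for every $k$, and, after replacing $i$ by its restriction to $\lambda_0(v)$, that $\lambda_0^1(i)\cup\lambda_0^0(i)\subseteq\lambda_0(v)$ as well. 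By Lemma~\ref{pre-10.4} all of $\mu(i_1),\dots,\mu(i_n)$ and $\mu(i)$ are preserved, and the hypothesis $f_v\le 0$ on $\lambda_0^0(i)\cap\lambda_0(v)$ survives because the negative part of $i$ only shrinks.

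Next I would split the coefficients. By corollary (2.17) of \cite{BB85} there are $f,g:\{1,\dots,n\}\rightarrow\bm 2$ with $f(k)\vee g(k)=1$, with $f(k)=1\Rightarrow a_k<2\varepsilon$, and with $g(k)=1\Rightarrow a_k>\varepsilon$; the gap $\varepsilon<2\varepsilon$ is exactly what makes this fuzzy comparison possible. Setting $i_j:=\bigvee_{g(k)=1}i_k$ I define $j:=i_j\sim\big(\bigvee_{k=1}^n(i_k\sim i_k)\big)$, so that, exactly as in Lemma~\ref{pre-10.8}, $\bm\lambda_0(j)=\big(\lambda_0^1(i_j)\cap\lambda_0(v),\,\lambda_0^0(i_j)\cap\lambda_0(v)\big)$. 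This yields condition (i) immediately. For condition (ii), any $x\in\lambda_0^1(j)$ lies in some $\lambda_0^1(i_{k_0})$ with $g(k_0)=1$, so by disjointness $f_v(x)=a_{k_0}>\varepsilon$.

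The heart of the argument is condition (iii), and the hard part is that constructively one cannot decide the sign of each $a_k$, so the ``small but positive'' contributions must be controlled without such a split. I would package the entire estimate into one comparison: let $u\in S(I)$ be the simple function with coefficient list $\big((c,j),(2\varepsilon,i)\big)$, so that $f_u\eqpf c\cdot\chi_{\bm\lambda_0(j)}+2\varepsilon\cdot\chi_{\bm\lambda_0(i)}$ and $\int u\,d\mu=c\,\mu(j)+2\varepsilon\,\mu(i)$. I claim $f_v\le f_u$ on the common domain $\lambda_0(u)\subseteq\lambda_0(v)$, verified pointwise by the (finite and decidable) case analysis: if $x$ lies off every $\lambda_0^1(i_k)$ then $f_v(x)=0\le f_u(x)$; if $x\in\lambda_0^1(i_{k_0})$ with $g(k_0)=1$ then $\chi_{\bm\lambda_0(j)}(x)=1$ and hence $f_v(x)\le c\le f_u(x)$; and if $x\in\lambda_0^1(i_{k_0})$ with $g(k_0)=0$, so $f(k_0)=1$ and $a_{k_0}<2\varepsilon$ while $\chi_{\bm\lambda_0(j)}(x)=0$, I distinguish $\chi_{\bm\lambda_0(i)}(x)=1$ (whence $f_v(x)=a_{k_0}<2\varepsilon=f_u(x)$) from $\chi_{\bm\lambda_0(i)}(x)=0$ (whence $x\in\lambda_0^0(i)\cap\lambda_0(v)$, so the support hypothesis forces $f_v(x)=a_{k_0}\le 0=f_u(x)$). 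Applying Lemma~\ref{pre-10.6} then gives $\int v\,d\mu\le\int u\,d\mu=c\,\mu(j)+2\varepsilon\,\mu(i)$, which rearranges to $\mu(j)\ge c^{-1}\big(\int v\,d\mu-2\varepsilon\,\mu(i)\big)$, establishing condition (iii).
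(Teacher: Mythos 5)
Your construction of $j$ and your verifications of (i) and (ii) coincide with the paper's (incidentally, the paper's proof mis-copies the thresholds from corollary (2.17) of \cite{BB85} as $N^{-1}$ and $(2N)^{-1}$ from lemma (\ref{pre-10.8}); your $a_k<2\varepsilon$ and $a_k>\varepsilon$ are what the rest of its argument actually uses). The one genuine gap is that you cannot entirely dispense with the paper's global case distinction: $i_j:=\bigvee_{g(k)=1}i_k$ is an iterated application of the \emph{binary} assignment routine $\vee:I\times I\rightsquigarrow I$, so it is undefined when $\{k:g(k)=1\}$ is empty, which happens exactly when corollary (2.17) returns $g\equiv 0$ because every $a_k<2\varepsilon$. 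Since ``$\exists k\leq n$ with $g(k)=1$'' is a decidable finite disjunction, the repair is the one the paper makes explicit: in the degenerate branch take $j:=\bigvee_{k=1}^n(i_k\sim i_k)$, so that $\bm\lambda_0(j)=(\emptyset,\lambda_0(v))$ by lemma (\ref{pre-10.3}), conditions (i) and (ii) hold trivially, $\mu(j)=0$ by lemma (\ref{pre-10.2}), and your own majorant comparison, now with $\chi_{\bm\lambda_0(j)}\equiv 0$, still gives $\int v\,d\mu\leq 2\varepsilon\mu(i)$, which is (iii) in that case. With this branch added your argument is complete.

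For (iii) in the main branch your route is genuinely different from, and tidier than, the paper's. The paper compares $v$ with the auxiliary simple function $w:=(a_k,i_k\wedge i)_{k=1}^n$, evaluates $\int w\,d\mu=\sum_k a_k\mu(i_k\wedge i)$, and then estimates the two groups of terms separately; this needs the disjointness of the $\bm\lambda_0(i_k\wedge i)$, finite additivity of $\mu$ on disjoint pieces, and monotonicity (lemma (\ref{pre-10.5})) to obtain $\sum_{f(k)=1}\mu(i_k\wedge i)\leq\mu(i)$ and $\sum_{g(k)=1}\mu(i_k\wedge i)\leq\mu(j)$. Your single comparison $f_v\leq c\cdot\chi_{\bm\lambda_0(j)}+2\varepsilon\cdot\chi_{\bm\lambda_0(i)}$, checked by a finite decidable pointwise case analysis and fed once into lemma (\ref{pre-10.6}), absorbs all of that into one step. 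The place where the hypothesis $f_v\leq 0$ on $\lambda_0^0(i)\cap\lambda_0(v)$ enters---a point of $\lambda_0^1(i_{k_0})\cap\lambda_0^0(i)$ with possibly negative $a_{k_0}$---is the same in both arguments, but your version makes it explicit where the paper leaves it buried in the unjustified inequality $\int v\,d\mu\leq\int w\,d\mu$.
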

\begin{proof}
Again w.l.o.g. we can assume that the $\bm\lambda_0(i_k)$ are disjoint (otherwise replace $v$ by $\mathsf{disjrep}(v)$). Let $f,g:\{1,...,n\}\rightarrow\bm 2$ be functions s.t.
\begin{enumerate}
\item $\forall k\in\{1,...,n\}:\;\text{either } f(k)=1 \text{ or } g(k)=1$
\item $f(k)=1 \;\Rightarrow\; a_k<N^{-1}$
\item $g(k)=1 \;\Rightarrow\; a_k>(2N)^{-1}$
\end{enumerate}
Such $f$ and $g$ exist by corollary (2.17) of \cite{BB85}. We have that
\begin{align*}
\Big(\forall k\in\{1,...,n\}:\;f(k)=1\Big)\quad\text{or}\quad\Big(\exists k\in\{1,...,n\}:\;g(k)=1\Big)
\end{align*}
In the first case, let $j\in I$ be such that $\bm\lambda_0(j)=(\emptyset,\lambda_0(v))$ (such a $j$ exists by lemma (\ref{pre-10.3})). Then clearly $\lambda_0^1(j)\cup\lambda_0^0(j)\subseteq\lambda_0(v)$ and $\forall x\in\lambda_1^0(j):\; f_v(x)>\varepsilon$ is vacuously true. By lemma (\ref{pre-10.2}) $\mu(j)=0$, so in order to check (iii) we need to prove 
\begin{align*}
2\varepsilon\mu(i)\geq \int v\;d\mu
\end{align*}
which follows from lemma (\ref{pre-10.6}) and the fact that $f_v\leq 2\varepsilon\chi_{\bm\lambda_0(i)}$ on $\lambda_0(v)\cap\big(\lambda_0^1(i)\cup\lambda_0^0(i)\big)$. This inequality holds since $a_k<2\varepsilon$ for all $k$ and $f_v\leq 0$ on $\lambda_0^0(i)\cap\lambda_0(v)$.

Now assume that $\Big(\exists k\in\{1,...,n\}:\;g(k)=1\Big)$ is inhabited. Let $j':=\bigvee_{g(k)=1}i_k$ and $j\in I$ s.t. $\bm\lambda_0(j)=\big(\lambda_0^1(j')\cap\lambda_0(v),\lambda_0^0(j')\cap\lambda_0(v)\big)$. Then clearly $\lambda_0^1(j)\cup\lambda_0^0(j)\subseteq\lambda_0(v)$. If $x\in\lambda_0^1(j)$ then in particular $x\in\lambda_0^1(i_k)$ for some $k\in \{1,...,n\}$ with $g(k)=1$ and hence $f_v(x)=a_k>\varepsilon$.

For each $k\in\{1,...,n\}$ let $j_k:=i_k\wedge i$ and define $w:=(a_k,i_k\wedge i)_{k=1}^n\in S(I)$. Using lemma (\ref{pre-10.6}) we get that
\begin{align*}
\int v\;d\mu &\leq\int w\;d\mu \\
&=\sum_{f(k)=1}a_k\mu(j_k) \;+\;\sum_{g(k)=1}a_k\mu(j_k) \\
&\leq 2\varepsilon\Big(\sum_{f(k)=1}\mu(j_k)\Big) \;+\;c\Big(\sum_{g(k)=1}\mu(j_k)\Big) \\
&\leq 2\varepsilon\mu(i)+c\mu(j)
\end{align*}
Note that the $\bm\lambda_0(j_k)$ are disjoint, which allows us to make the following estimates for the last inequality:
\begin{align*}
\sum_{f(k)=1}\mu(j_k)=\mu(\bigvee_{f(k)=1}j_k)=\mu(i\wedge\bigvee_{f(k)=1}i_k)\leq \mu(i)
\end{align*}
and 
\begin{align*}
\sum_{g(k)=1}\mu(j_k)\leq \sum_{g(k)=1}\mu(i_k)=\mu(j')=\mu(j)
\end{align*}
\end{proof}

\begin{theorem}\label{pre-10.10}
$(X,S(I),\bm\Lambda,\int\_d\mu)$ is a pre-integration space
\end{theorem}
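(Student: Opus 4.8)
The plan is to equip $S(I)$ with the four operations of Definition \ref{preintegration space} and verify (PIS1)--(PIS4), with (PIS2) carrying essentially all of the difficulty. I would define scalar multiplication and addition by
\[
a\cdot(a_k,i_k)_{k=1}^n:=(a\cdot a_k,i_k)_{k=1}^n,\qquad (a_k,i_k)_{k=1}^n+(b_\ell,j_\ell)_{\ell=1}^m:=(c_r,l_r)_{r=1}^{n+m},
\]
the second tuple being the concatenation of the two given ones. For $\abs{\_}$ and $\wedge_1$ I would first pass to the disjoint representation $\mathsf{disjrep}(v)=(b_s,j_s)_s$ of Lemma \ref{pre-7.8}, whose $\bm\lambda_0(j_s)$ are pairwise disjoint, and set $\abs{v}:=(\abs{b_s},j_s)_s$ and $\wedge_1(v):=(b_s\wedge 1,j_s)_s$. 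Because at most one $\chi_{\bm\lambda_0(j_s)}$ is nonzero at any point of the common domain, this yields $f_{\abs{v}}\eqpf\abs{f_v}$ and $f_{\wedge_1(v)}\eqpf f_v\wedge 1$; together with the immediate $f_{a\cdot v}\eqpf af_v$ and $f_{v+w}\eqpf f_v+f_w$ this is the first half of (PIS1), and it also shows the four assignment routines respect $=_{S(I)}$, hence are functions. Linearity of the integral is then immediate from the definitions and Lemma \ref{pre-10.7}.

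For (PIS3) I would use (PMS3) to pick $i\in I$ with $\mu(i)>0$ and take $v:=(\mu(i)^{-1},i)\in S(I)$, giving $\int v\,d\mu=1$. For (PIS4) the decisive observation is that each $f_v$ is bounded, say $f_v\le c$ on $\lambda_0(v)$: then $f_v\wedge m=f_v$ for all integers $m\ge c$, so $\int\alpha(n)\,d\mu$ is eventually constant equal to $\int v\,d\mu$ and the first limit exists with a trivial modulus; and computing in the disjoint representation gives $\int\beta(n)\,d\mu=\sum_s(\abs{b_s}\wedge n^{-1})\mu(j_s)$, a finite sum each of whose terms tends to $0$ with an explicit modulus, so $\ell'=0$. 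All sums here are finite, so no choice is needed.

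The real content is (PIS2). Given $v$ and $\alpha\in\mathbb{F}(\mathbb{N},S(I))$ with $f_{\alpha(m)}\ge 0$ and $\ell:=\sum_k\int\alpha(k)\,d\mu<\int v\,d\mu$, put $\delta:=\int v\,d\mu-\ell>0$. The partial sums satisfy $\sum_{k\le K}\int\alpha(k)\,d\mu\le\ell$, so the simple functions $h_K:=f_v-\sum_{k=1}^K f_{\alpha(k)}$, which are the values of $v+\sum_{k\le K}(-1)\cdot\alpha(k)$ in $S(I)$, obey $\int h_K\,d\mu\ge\delta$ and decrease pointwise as $K$ grows. Fixing once and for all an index $i_0\in I$ whose positive part contains $\{f_v>0\}$ (so that $h_K\le 0$ off $\lambda_0^1(i_0)$ for every $K$) and a common bound $c$ for the $h_K$, and choosing $\varepsilon>0$ small relative to $\delta$ and $\mu(i_0)$, Lemma \ref{pre-10.9} produces for each $K$ an index $\gamma(K)\in I$ with $h_K>\varepsilon$ on $\lambda_0^1(\gamma(K))$, with $\lambda_0^1(\gamma(K))\subseteq\lambda_0(v)\cap\bigcap_{n\le K}\lambda_0(\alpha(n))$, and with $\mu(\gamma(K))\ge c^{-1}\delta/2>0$ uniformly in $K$. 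Since the $h_K$ decrease the level sets can be nested, so that $\mu(\bigwedge_{n=1}^K\gamma(n))=\mu(\gamma(K))$, and (PMS4) then delivers a point $x\in\bigcap_K\lambda_0^1(\gamma(K))$ lying in $\lambda_0(v)\cap\bigcap_n\lambda_0(\alpha(n))$ with $\sum_{k\le K}f_{\alpha(k)}(x)\le f_v(x)-\varepsilon$ for all $K$.

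I expect the main obstacle to be the constructive bookkeeping of convergence, to be done without countable choice. Two moduli must be supplied explicitly. First, (PMS4) requires that $\lim_K\mu(\bigwedge_{n\le K}\gamma(n))$ genuinely exist, so the nested level sets have to be arranged so that this decreasing, positively bounded sequence of measures is provably Cauchy; this is where the uniform lower bound from Lemma \ref{pre-10.9} together with the modulus of $\sum_k\int\alpha(k)\,d\mu$ must be combined, rather than appealing to the constructively invalid principle that bounded monotone sequences converge. Second, at the point $x$ the inequality $\sum_{k\le K}f_{\alpha(k)}(x)\le f_v(x)-\varepsilon$ must be upgraded to convergence of $\ell':=\sum_k f_{\alpha(k)}(x)$ with a modulus, again without selecting the modulus pointwise by choice. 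Once $\ell'$ is known to exist it automatically satisfies $\ell'\le f_v(x)-\varepsilon<f_v(x)$, which is exactly (PIS2) and completes the proof.
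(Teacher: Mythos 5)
Your treatment of (PIS1), (PIS3) and (PIS4) matches the paper's proof essentially verbatim (operations via concatenation and the disjoint representation, the witness $(\mu(i)^{-1},i)$ for (PIS3), eventual stabilization of $f_{\alpha(m)}$ and the explicit decay of $\int\beta(m)\,d\mu$ for (PIS4)). The gap is in (PIS2), and it is exactly at the two places you yourself flag as ``obstacles'': you do not supply the mechanism that closes them, and the mechanism is not a routine bookkeeping matter. First, Lemma \ref{pre-10.9} does not produce the level set $\{h_K>\varepsilon\}$; it produces \emph{some} index $j$ built from an approximate splitting of the coefficients of the disjoint representation (via Corollary (2.17) of \cite{BB85}, since $a_k<\varepsilon$ versus $a_k>\varepsilon/2$ is not decidable). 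Applying it separately to each $h_K$ gives indices $\gamma(K)$ whose positive parts need not be nested, so the identity $\mu(\bigwedge_{n\le K}\gamma(n))=\mu(\gamma(K))$ is unjustified, and even granting it, a decreasing sequence of measures bounded below by $c^{-1}\delta/2$ does not constructively converge: the uniform lower bound gives no Cauchy modulus, so the hypothesis of (PMS4) is not verified. Second, at the point $x$ delivered by (PMS4) you only obtain bounded partial sums $\sum_{k\le K}f_{\alpha(k)}(x)\le f_v(x)-\varepsilon$, and a bounded increasing series of nonnegative reals need not converge constructively, so the existence of $\ell'$ is not established.

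The paper resolves both problems with a single quantitative device that your sketch omits: it groups the tail of the series into blocks $\gamma(k):=\sum_{n=\eta(k)}^{\eta(k+1)-1}\alpha(n)$ whose integrals decay geometrically (using the modulus of convergence of $\sum_k\int\alpha(k)\,d\mu$), applies Lemma \ref{pre-10.8} --- which you never invoke --- to each block to obtain exceptional sets $j_k$ with $\mu(j_k)\le 2^{-k}\alpha$ outside of which $f_{\gamma(k)}<2^{-k}\varepsilon$, and applies Lemma \ref{pre-10.9} only once, to $v-\gamma(1)$, to get a single set $j$ with $\mu(j)\ge\alpha$. The sequence $\delta(n):=\bigwedge_{k=2}^n(j-j_k)$ is then provably Cauchy because $0\le\mu(\delta(n))-\mu(\delta(m))\le\sum_{k=n+1}^m\mu(j_k)\le\sum_{k=n+1}^m 2^{-k}\alpha$, an explicit geometric tail, and its limit is $\ge\mu(j)-\sum_{k\ge2}2^{-k}\alpha\ge\alpha/2>0$; at the resulting point $x$ the bounds $f_{\gamma(k)}(x)<2^{-k}\varepsilon$ dominate the series by a geometric one, giving the modulus for $\ell'$ and the strict inequality $f_v(x)-\ell'\ge\varepsilon/2$. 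Without this block decomposition and Lemma \ref{pre-10.8}, your argument does not go through.
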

\begin{proof}
We first have to define the required assignment routines, so let $v,w\in S(I)$ s.t. $\mathsf{disjrep}(v):=(a_k,i_k)_{k=1}^n$ and $\mathsf{disjrep}(w):=(b_\ell,j_\ell)_{\ell=1}^m$, $a\in\mathbb{R}$ and define:
\begin{align*}
a\cdot v&:=(a\cdot a_k,i_k)_{k=1}^n \\
v+w&:=\big((a_1,i_1),...,(a_n,i_n),(b_1,j_1),...,(b_m,j_m)\big) \\
\abs{v}&:=(\abs{a_k},i_k)_{k=1}^n \\
\wedge_1(v)&:=(a_k\wedge 1,i_k)_{k=1}^n
\end{align*}
It is easy to check that these assignment routines fulfill (PIS1). 

By (PMS3) there is an $i\in I$ s.t. $\mu(i)>0$. Let $v:=(\mu(i)^{-1},i)\in S(I)$, then $\int v\;d\mu=1$, which verifies (PIS3). Let $v\in S(I)$ with $\mathsf{disjrep}(v):=(a_k,i_k)_{k=1}^n$ and $\alpha,\beta\in\mathbb{F}(\mathbb{N}, S(I))$ be given by  $\alpha(m)=m\cdot (\wedge_1(m^{-1}\cdot v))$ and $\beta(m)=m^{-1}\cdot(\wedge_1(m\cdot\abs{v}))$ for $m\in\mathbb{N}$. Let $M_0\in\mathbb{N}$ be s.t. $M_0>a_1\vee...\vee a_n$ and $m\geq M_0$. Then $f_{\alpha(m)}=f_v$, i.e. $\alpha(m)=_{S(I)}v$ and thus $\lim_{n\rightarrow\infty}\int \alpha(n)\;d\mu=\int v\;d\mu$. Now, let $M_1\in\mathbb{N}$ s.t. $M_1^{-1}<\vert a_1\vert\wedge...\wedge\vert a_n\vert$ and $m\geq M_1$. Then 
\begin{align*}
\int \beta(m)\;d\mu=\frac{n}{m}\sum_{k=1}^n\mu(i_k)\xrightarrow{m\rightarrow\infty}0
\end{align*}

It remains to check (PIS2). So let $v\in S(I)$ with $v:=(a_k,i_k)_{k=1}^n$ and $\alpha\in\mathbb{F}(\mathbb{N},S(I))$ s.t. for all $n\in\mathbb{N}$ we have $f_{\alpha(n)}\geq 0$ and $\ell:=\sum_{k=1}^\infty\int\alpha(k)\;d\mu$ exists and $\ell<\int i\;d\mu$. Let $M:\mathbb{N}\rightarrow\mathbb{N}$ be the (strictly increasing) modulus of convergence with which $\big(\sum_{k=1}^n\int\alpha(k)\;d\mu\big)_{n=1}^\infty$ converges to $\ell\in\mathbb{R}$, i.e. for each $p\in\mathbb{N}$ we have that
\begin{align*}
\abs*{\ell-\sum_{k=1}^{M(p)}\int\alpha(k)\;d\mu}=\sum_{k=M(p)+1}^\infty\int\alpha(k)\;d\mu\leq\frac{1}{2^p}
\end{align*}
Let $i:=\bigvee_{k=1}^n i_k$ and note that $f_v=0$ on $\lambda_0^0(i)$. Furthermore take $c>0$ s.t. $f_v\leq c$ on $\lambda_0(v)$ and define
\begin{itemize}
\item $r:=\int v \;d\mu-\ell$
\item $\varepsilon:=\nicefrac{1}{2}\big(1+\mu(i)\big)^{-1}r$
\item $\alpha:=\nicefrac{1}{c}\big(r-2\varepsilon\mu(i)\big)$
\end{itemize}
Define the strictly increasing sequence $\eta:\mathbb{N}\rightarrow\mathbb{N}$ by $\eta(1):=1$ and for $n\geq 2$ by $\eta(n):=M(2n+1+p)-1$, where $p\in\mathbb{N}$ is s.t. $2^{-p}<\alpha\varepsilon$. Then for $n\geq 2$ we have that
\begin{align*}
\sum_{k=\eta(n)}^{\eta(n+1)-1}\int \alpha(k)\;d\mu \leq \frac{1}{2^{(2n+1)}}\alpha\varepsilon
\end{align*}
Let $\gamma\in\mathbb{F}(\mathbb{N},S(I))$ be given by $\gamma(n):=\sum_{k=\eta(n)}^{\eta(n+1)-1} \alpha(k)$ for all $n\in\mathbb{N}$.
Let $q\in\mathbb{N}$ be s.t. $q^{-1}<\varepsilon$ and for $k\geq 2$ let $j_k:=\phi_{2^kq}(\gamma(k))$ with $\phi$ as defined in lemma (\ref{pre-10.8}). It follows that $\lambda_0^1(j_k)\cup\lambda_0^0(j_k)\subseteq\lambda_0(\gamma(k))$ and for all $x\in\lambda_0^0(j_k)$ we have $f_{\gamma(k)}(x)<2^{-k}\varepsilon$ and
\begin{align*}
\mu(j_k)\leq \frac{2^{(k+1)}}{\varepsilon}\int \gamma(k)\;d\mu \leq 2^{-k}\alpha
\end{align*}
Furthermore, let $j\in I$ be s.t. for $w:=v-{\gamma(1)}$ we have that $\lambda_0^1(j)\cup\lambda_0^0(j)\subseteq\lambda_0(w)$ and for all $x\in\lambda_0^1(j)$ we have $f_w(x)>\varepsilon$ and
\begin{align*}
\mu(j)\geq c^{-1}\bigg(\int w\;d\mu - 2\varepsilon\mu(i)\bigg)\geq c^{-1}\big(r- 2\varepsilon\mu(i)\big)=\alpha
\end{align*}
Such a $j$ exists by lemma (\ref{pre-10.9}), since we have $0\leq f_{\gamma(1)}\leq\ell$ and thus $f_w\leq f_v<c$ on $\lambda_0(w)$ and $f_w\leq f_v=0$ on $\lambda_0^0(i)$. Finally, let $\delta\in\mathbb{F}(\mathbb{N},I)$ be given by $\delta(n):=\bigwedge_{k=2}^n(j-j_k)$ for $n\geq 2$. We now claim that $\ell':=\lim_{n\rightarrow\infty}\mu\big(\delta(n)\big)$ exists and that
\begin{align*}
\ell'\geq \mu(j)-\sum_{k=2}^\infty 2^{-k}\alpha\geq \frac{\alpha}{2}>0
\end{align*}
From this claim it follows by (PMS4) that there is an inhabitant
\begin{align*}
x\in\bigcap_{n\in\mathbb{N}}\lambda_0^1(\delta(n))=\lambda_0^1(j)\cap\big(\bigcap_{n\in\mathbb{N}}\lambda_0^0(j_n)\big)
\end{align*}
For $k\geq 2$ we get that $f_{\gamma(k)}(x)<2^{-k}\varepsilon$ and thus $\ell'':=\lim_{n\rightarrow\infty}f_{\beta(n)}(x)=\sum_{k=1}^\infty f_{\gamma(k)}(x)$ exists and 
\begin{align*}
f_v(x)-\ell''=\underbrace{f_v(x)-f_{\gamma(1)}(x)}_{=f_w(x)}-\sum_{k=2}^\infty f_{\gamma(k)}(x) \geq\varepsilon -\sum_{k=2}^\infty 2^{-k}\varepsilon=\frac{\varepsilon}{2}>0
\end{align*}
This completes the verification of (PIS2) so it remains to proof the claim. 

For $m>n\geq 2$ we have:
\begin{align*}
0 	&\leq \mu(\delta(n))-\mu(\delta(m)) \quad\text{by lemma (\ref{pre-10.5})}\\
	&\overset{(*)}{\leq}\mu\big(\delta(n)-\delta(m)\big) 	\\
	&=\mu\big( (j-\bigvee_{k=2}^nj_k)-(j-\bigvee_{k=2}^mj_k)\big) \\
	&\overset{(**)}{\leq}\mu\big( \bigvee_{k=2}^mj_k-\bigvee_{k=2}^nj_k\big)\\
	&\overset{(***)}{\leq}\mu\big(\bigvee_{k=n+1}^m j_k\big)\leq \sum_{k=n+1}^m\mu(j_k)\leq\sum_{k=n+1}^m 2^{-k}\alpha
\end{align*}
It follows that the sequence $\big(\mu(\delta(n))\big)_{n\in\mathbb{N}}$ is a Cauchy-sequence and hence its limit exist. Furthermore, we get
\begin{align*}
\lim_{n\rightarrow\infty}\mu(\delta(n))&=\lim_{n\rightarrow\infty}\mu\big(j-\bigvee_{k=2}^nj_k\big) \\
&\overset{(*)}{\geq} \mu(j)-\lim_{n\rightarrow\infty}\mu(\bigvee_{k=2}^n j_k)\\
&\geq \mu(j)-\sum_{k=2}^\infty 2^{-k}\alpha \geq \frac{\alpha}{2}>0
\end{align*}
Note that we used the following inequalities: Let $A,B,C\in I$, we have 
\begin{align*}
(*) \quad \mu(A-B)\overset{\mathsmaller{\text{(PMS2)}}}{=}\mu(A)-\mu(A\wedge B)\overset{\mathsmaller{\text{lemma (\ref{pre-10.5})}}}{\geq} \mu(A)-\mu(B) 
\end{align*}
\begin{align*}
(**) \quad \mu((A-B)-(A-C))&=\mu((A-B)\wedge(-A\vee C))\\
&=\mu\big(((A-B)-A)\vee (A\wedge(C-B))\big) \\
&\leq \underbrace{\mu((A-A)-B)}_{\mathsmaller{\leq\mu(A-A)=0\text{ by lemma (\ref{pre-10.2})}}} +\quad\mu(A\wedge(C-B)\leq\mu(C-B)
\end{align*}
\begin{align*}
(***) \quad \mu(A\vee B -B)=\mu((A-B)\vee(B-B))\leq \mu(A-B)+\mu(B-B)\leq\mu(A)
\end{align*}
\end{proof}
\newpage
This finishes the section on the simple functions. Note that most arguments have been directly translated from section 10 of chapter 6 of \cite{BB85} to our setting of pre-measure and pre-integration spaces. However, we were able to make certain things more precise,  like the formulation and proof of lemma (\ref{pre-10.8}) and the proof of theorem (\ref{pre-10.10}) and thus avoided using the axiom of countable choice.

\section{Complete extension of a pre-integration space}
Recall that we haven't addressed the most serious problems of BCMT from a predicative perspective. The construction of the complete extension of an integration space relies on the assumption that the totality of integrable functions forms a set, which is not acceptable from a predicative viewpoint. For the remainder of this paper we want to propose a solution to this problem. As we will see it suffices to consider the totality of canonically integrable functions, which does form a set, in order to construct the complete extension.

\subsection{The 1-norm of a pre-integration space}
Before we can give a predicative account of the complete extension of a pre-integration space, we have to make precise in what way this extended pre-integration space is the completion of the first one. In this section we will show how we can define a norm on the index set of the set of partial functions of a pre-integration space. We start by proving some basic results about pre-integration spaces (see \cite[pp. 217-218]{BB85}) that will be useful later.

\begin{lemma}\label{pre-1.2}
Let $i\in I$ and $\alpha\in\mathbb{F}(\mathbb{N},I)$ be such that for all $n\in\mathbb{N}$ we have $f_{\alpha(n)}\geq 0$ and $\sum_n\int\alpha(n)$ extists and $\int i+\sum_n\int\alpha(n)>0$. Then there exists $x\in\lzi\cap\bigcap_n\lambda(\alpha(n))$ such that $\sum_n f_{\alpha(n)}(x)$ exists and $f_i(x)+\sum_n f_{\alpha(n)}(x)>0$.
\end{lemma}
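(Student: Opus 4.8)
The plan is to reduce the statement to a single application of (PIS2), after a sign‑flip that converts the positivity hypothesis into the ``series dominated by one function'' shape that (PIS2) actually provides.

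First I would split $f_i$ into nonnegative and nonpositive parts. Writing $o := 0\cdot i$ (so $f_o\eqpf 0$ on $\lambda_0(i)$ by (PIS1)), I set $i^+ := i\vee o$ and $i^- := ((-1)\cdot i)\vee o$; then $f_{i^+}=f_i\vee 0\geq 0$ and $f_{i^-}=(-f_i)\vee 0\geq 0$ by the remark following Definition \ref{preintegration space}, and since $\bm\Lambda$ is an $I$-set of partial functions, $f_i=f_{i^+}-f_{i^-}$ forces $i=_I i^+ +(-1)\cdot i^-$, whence $\int i=\int i^+-\int i^-$ by linearity. Assembling the nonnegative sequence $\beta$ with $\beta(1):=i^+$ and $\beta(m+1):=\alpha(m)$, the hypothesis $\int i+\sum_n\int\alpha(n)>0$ becomes $\int i^- < \sum_m\int\beta(m)$, and the desired conclusion $f_i(x)+\sum_n f_{\alpha(n)}(x)>0$ becomes $f_{i^-}(x)<\sum_m f_{\beta(m)}(x)$. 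Thus everything reduces to a ``flipped'' form of (PIS2): given an index $h$ (here $h:=i^-$) with $f_h\geq 0$ and a nonnegative sequence $\beta$ with $\int h<\sum_m\int\beta(m)$, produce a point where $f_h(x)<\sum_m f_{\beta(m)}(x)$ with the series convergent.

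To prove this flipped form I would use that the partial sums $\sum_{m\le N}\int\beta(m)$ increase to $\sum_m\int\beta(m)>\int h$. Putting $\delta:=\sum_m\int\beta(m)-\int h>0$, I would invoke the modulus of convergence to choose $N$ with $\sum_{m>N}\int\beta(m)<\delta/2$; then the finite index $G_N$ with $f_{G_N}=\sum_{m\le N}f_{\beta(m)}$ (obtained by iterating $+$) has $\int G_N=\sum_{m\le N}\int\beta(m)>\int h+\delta/2$. Now I apply (PIS2) to the single index $G_N$ and the nonnegative sequence $(h,\beta(N+1),\beta(N+2),\dots)$, whose integrals sum to $\int h+\sum_{m>N}\int\beta(m)<\int G_N$ — precisely where the choice of $N$ is used. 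This yields a point $x$, lying in $\lambda_0(i)\cap\bigcap_n\lambda_0(\alpha(n))$ once the domains are unwound, with $f_h(x)+\sum_{m>N}f_{\beta(m)}(x)<f_{G_N}(x)$ and the tail series convergent. Adding back the finitely many defined terms $f_{\beta(1)}(x),\dots,f_{\beta(N)}(x)$ gives convergence of the full series, and since $\sum_{m>N}f_{\beta(m)}(x)\ge 0$ one reads off $f_h(x)<\sum_{m\le N}f_{\beta(m)}(x)\le\sum_m f_{\beta(m)}(x)$, as needed.

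The main obstacle is conceptual rather than computational: (PIS2) only dominates a series \emph{from above} by a single function, whereas here a single function must be dominated \emph{from below} by a series. The $i^\pm$ splitting converts the hypothesis correctly but exposes this opposite direction, and the finite-truncation argument is the device that recovers it constructively. The delicate points are choosing $N$ large enough that moving the finite block $\sum_{m\le N}\beta(m)$ onto the single-function side of (PIS2) still leaves a strict gap, while keeping the \emph{entire} tail inside the sequence so that convergence of $\sum_n f_{\alpha(n)}(x)$ is genuinely delivered at the witnessing point. I would also verify that the domains $\lambda_0$ of the auxiliary indices $i^+,i^-,o$ all collapse to $\lambda_0(i)$, so that the intersection returned by (PIS2) is exactly $\lambda_0(i)\cap\bigcap_n\lambda_0(\alpha(n))$.
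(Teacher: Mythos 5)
Your proof is correct, and its engine is the same as the paper's: pick $N$ so that the tail $\sum_{n>N}\int\alpha(n)$ is small compared to the positive gap, fold the head of the series into the single-function slot of (PIS2), and apply (PIS2) to the remaining tail. The difference is that you first decompose $i$ into $i^+$ and $i^-$ and place $i^-$ \emph{inside} the sequence fed to (PIS2), which is why you need its nonnegativity. The paper skips this entirely: it applies (PIS2) with the single index $i+\sum_{n=1}^N\alpha(n)$ and the sequence $(\alpha(N+1),\alpha(N+2),\dots)$, observing that (PIS2) imposes the sign condition only on the \emph{sequence} terms, not on the distinguished index $i$, so $i$ may simply stay on that side with whatever sign $f_i$ has. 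The resulting point $x$ satisfies $f_i(x)+\sum_{n=1}^N f_{\alpha(n)}(x)>\sum_{n=N+1}^\infty f_{\alpha(n)}(x)\geq 0$, which is the conclusion. So your ``conceptual obstacle'' (that a single function must be dominated from below by a series) is real, but the finite-truncation step alone already resolves it; the $i^\pm$ splitting buys nothing and costs you the extra verifications that $\lambda_0(i^+),\lambda_0(i^-),\lambda_0(0\cdot i)$ all collapse to $\lambda_0(i)$ and that $i=_I i^++(-1)\cdot i^-$. Those verifications do go through, so the argument stands; it is just longer than necessary.
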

\begin{proof}
Let $N\in\mathbb{N}$ be s.t. $\sum_{n=N+1}^\infty\int\alpha(n)<\frac{1}{2}\big(\int i+\sum_n\int\alpha(n)\big)$, then using induction, we get that
\begin{align*}
\sum_{n=N+1}^\infty\int\alpha(n)<\int i+\sum_{n=1}^N\int\alpha(n)=\int\bigg(i+\sum_{n=1}^N\alpha(n)\bigg)
\end{align*}
From (PIS2) it follows that there is a $x\in\lzi\cap\bigcap_n\lambda(\alpha(n))$ such that $\sum_{n=N+1}^\infty f_{\alpha(n)}(x)$ exists and $f_i(x)+\sum_{n=1}^N f_{\alpha(n)}(x)>\sum_{n=N+1}^\infty f_{\alpha(n)}(x)$. Now the result follows since for all $n\in\mathbb{N}$ we have that $f_{\alpha(n)}\geq 0$.
\end{proof}

\begin{lemma}\label{pre-1.4}
$\forall i\in I:\;f_i\geq 0\;\Rightarrow\;\int i\geq 0$
\end{lemma}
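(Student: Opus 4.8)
The plan is to argue by contradiction, exploiting that for Bishop's constructive reals $\int i \geq 0$ unfolds by definition to $\neg(\int i < 0)$, and to feed this into Lemma \ref{pre-1.2}, which was established immediately beforehand precisely for arguments of this shape. So I would assume $\int i < 0$ and aim to manufacture a point of $\lambda_0(i)$ at which $f_i$ is strictly negative, in direct conflict with the hypothesis $f_i \geq 0$.

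Concretely, I would first pass to $j := (-1)\cdot i \in I$. By the linearity part of (PIS1) one has $f_j \eqpf -f_i$ and $\int j =_{\mathbb{R}} -\int i$, so the assumption $\int i < 0$ yields $\int j > 0$. Next I would invoke Lemma \ref{pre-1.2} with base index $j$ and the constant sequence $\alpha \in \mathbb{F}(\mathbb{N},I)$ given by $\alpha(n) := 0\cdot i$. Here (PIS1) gives $f_{\alpha(n)} \eqpf 0$, so $f_{\alpha(n)} \geq 0$ for every $n$, the series $\sum_n \int\alpha(n)$ converges to $0$, and $\int j + \sum_n \int\alpha(n) =_{\mathbb{R}} \int j > 0$. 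Thus all hypotheses of Lemma \ref{pre-1.2} are in place.

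The conclusion of Lemma \ref{pre-1.2} then supplies an $x \in \lambda_0(j)\cap\bigcap_n\lambda_0(\alpha(n))$ for which $\sum_n f_{\alpha(n)}(x)$ exists and $f_j(x) + \sum_n f_{\alpha(n)}(x) > 0$. Since scalar multiplication leaves the domain unchanged, $\lambda_0(j) = \lambda_0(0\cdot i) = \lambda_0(i)$, so in fact $x \in \lambda_0(i)$; and as $\sum_n f_{\alpha(n)}(x) = 0$ and $f_j(x) =_{\mathbb{R}} -f_i(x)$, the inequality collapses to $f_i(x) < 0$. This contradicts $f_i \geq 0$, which asserts $f_i(x) \geq 0$ for all $x \in \lambda_0(i)$. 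Hence $\neg(\int i < 0)$, that is $\int i \geq 0$.

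I do not expect a genuine obstacle: the mathematical substance resides entirely in Lemma \ref{pre-1.2}, which I may assume. The only points requiring care are routine bookkeeping — verifying via (PIS1) that $0\cdot i$ has associated function $0$ and integral $0$ (using that $\bm\Lambda$ is an $I$-set so that $\int$ respects the induced equality), confirming that the domains $\lambda_0(j)$ and $\lambda_0(\alpha(n))$ all coincide with $\lambda_0(i)$, and recalling that constructively $\int i \geq 0$ means $\neg(\int i < 0)$, so that deriving a contradiction from $\int i < 0$ is a legitimate proof of the statement rather than an appeal to excluded middle.
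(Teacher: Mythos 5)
Your proposal is correct and follows essentially the same route as the paper: assume $\int i<0$, apply Lemma \ref{pre-1.2} to $-i$ together with the constant sequence $(0\cdot i,0\cdot i,\dots)$, and extract a point $x\in\lambda_0(i)$ with $-f_i(x)>0$, contradicting $f_i\geq 0$. Your version merely spells out the bookkeeping (the role of (PIS1), the identification of the domains up to $\eqsubset$, and the constructive reading of $\geq 0$ as $\neg(<0)$) that the paper leaves implicit.
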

\begin{proof}
Let $i\in I$ and assume that $f_i\geq 0$ but $\int i<0$. Consider the sequence $(0\cdot i,0\cdot i,...)$ then $f_{0\cdot i}\eqpf 0f_i=0$ and $\sum_n\int 0\cdot i=0$ and $\int (-i)+\sum_n\int 0\cdot i >0$. Hence, by lemma (\ref{pre-1.2}) there is an $x\in\lambda_0(i)$ s.t. $-f_i(x)+\sum_n0f_i(x)=-f_i(x)>0$ which is a contradiction.
\end{proof}

\begin{lemma}\label{pre-1.5}
$\forall i\in I:\;\abs*{\int i}\leq \int\abs{i} $
\end{lemma}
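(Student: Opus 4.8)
The plan is to reduce this inequality to two applications of Lemma~\ref{pre-1.4}, using only the linearity of the integral and the compatibility between the index operations and the partial functions $f_i$ provided by (PIS1). The guiding observation is the elementary real-number fact that $|a| - a \geq 0$ and $|a| + a \geq 0$ for every $a \in \mathbb{R}$, so the partial functions $|f_i| - f_i$ and $|f_i| + f_i$ are everywhere nonnegative on their domains.

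First I would form the two indices $|i| + (-1)\cdot i$ and $|i| + i$ in $I$, using the scalar-multiplication and addition assignment routines that come with the pre-integration space (these are functions by the remark following Definition~\ref{preintegration space}). By (PIS1) we then have $f_{|i| + (-1)\cdot i}\eqpf f_{|i|} + (-1)f_i \eqpf |f_i| - f_i$ and, likewise, $f_{|i| + i}\eqpf |f_i| + f_i$, where I have used the clause $f_{|i|}\eqpf |f_i|$ together with $f_{a\cdot i}\eqpf a f_i$ and $f_{i+j}\eqpf f_i + f_j$. By the pointwise inequalities above, both partial functions are nonnegative, i.e. $f_{|i| + (-1)\cdot i}\geq 0$ and $f_{|i| + i}\geq 0$.

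Next I would apply Lemma~\ref{pre-1.4} to each of these indices, which yields $\int\big(|i| + (-1)\cdot i\big)\geq 0$ and $\int\big(|i| + i\big)\geq 0$. Invoking the linearity of the integral from (PIS1), namely $\int(a\cdot i + b\cdot j)=_{\mathbb{R}}a\int i + b\int j$, the first inequality rewrites as $\int |i| - \int i \geq 0$ and the second as $\int |i| + \int i \geq 0$. Together these give $-\int|i| \leq \int i \leq \int|i|$, which is precisely $\abs*{\int i}\leq \int|i|$.

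I do not expect a serious obstacle here; the only point requiring a little care is that the equalities furnished by (PIS1) are equalities of partial functions, whose domains are the relevant intersections $\lambda_0(|i|)\cap\lambda_0(i)$. Hence the assertion $f_{|i|\pm i}\geq 0$ must be read as nonnegativity on the domain of the index $|i|\pm i$, but since $\eqpf$ preserves pointwise values this is automatic, and Lemma~\ref{pre-1.4} is stated exactly for such indices. The remainder is routine bookkeeping with the linear functional $\int$.
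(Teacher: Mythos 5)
Your proposal is correct and follows exactly the paper's argument: the paper's proof likewise applies Lemma~\ref{pre-1.4} to $\abs{i}-i$ and $\abs{i}+i$ and concludes by linearity. You have merely spelled out the (PIS1) bookkeeping that the paper leaves implicit.
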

\begin{proof}
The result is immediatelly obtained by applying lemma (\ref{pre-1.4}) to $\abs{i}-i$ and $\abs{i}+i$ respectively.
\end{proof}

\begin{lemma}\label{pre-1.4.2}
Let $i,j\in I$ be s.t. for all $x\in\lzi\cap\lzj$ we have $f_i(x)\leq f_j(x)$, then $\int i\leq\int j$
\end{lemma}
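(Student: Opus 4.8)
The plan is to reduce the statement to the nonnegativity result of lemma (\ref{pre-1.4}) by forming the difference of the two indices inside $I$. Concretely, I would set $k := j + (-1)\cdot i \in I$, using the $+$ and scalar-multiplication assignment routines supplied by the pre-integration space. By (PIS1) the associated partial function satisfies $f_k \eqpf f_j + f_{(-1)\cdot i}\eqpf f_j - f_i$, and since multiplying a partial function by a scalar leaves its domain unchanged, the domain of $f_k$ is $\lzj\cap\lambda_0\big((-1)\cdot i\big) = \lzi\cap\lzj$.

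The key observation is then that on exactly this domain $\lzi\cap\lzj$ the hypothesis $f_i(x)\leq f_j(x)$ gives $f_k(x) = f_j(x)-f_i(x)\geq 0$. Thus $f_k\geq 0$ on its domain, and lemma (\ref{pre-1.4}) yields $\int k\geq 0$. To evaluate $\int k$, I would note that $f_{1\cdot j}\eqpf 1\cdot f_j \eqpf f_j$, so since $\bm\Lambda$ is an $I$-set of partial functions we have $j =_I 1\cdot j$, and hence $k =_I 1\cdot j + (-1)\cdot i$. Applying the linearity clause of (PIS1) with $a = 1$ and $b = -1$ gives $\int k = \int j - \int i$. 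Combining this with $\int k\geq 0$ yields $\int i \leq \int j$, as desired.

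The only point requiring care — and the main (minor) obstacle — is the domain bookkeeping: one must confirm that the domain of $f_k$ is precisely $\lzi\cap\lzj$, so that the hypothesis, which is assumed only on that intersection, already forces $f_k\geq 0$ everywhere on $\lambda_0(k)$ and thereby makes lemma (\ref{pre-1.4}) applicable. This is immediate from the pointwise definition of the operations on $\mathcal{F}(X)$, so no genuine difficulty arises beyond it.
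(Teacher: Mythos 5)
Your proposal is correct and follows essentially the same route as the paper: form the difference $j-i$ in $I$, observe via (PIS1) that its associated partial function is $f_j-f_i\geq 0$ on $\lambda_0(j)\cap\lambda_0(i)$, apply lemma (\ref{pre-1.4}), and conclude by linearity of $\int$. The extra care you take with the domain bookkeeping and with justifying $\int k=\int j-\int i$ is a welcome elaboration of steps the paper leaves implicit.
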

\begin{proof}
Consider $j-i\in I$, then $\lambda_0(j-i)\eqsubset\lzi\cap\lzj$ and thus $f_{j-i}\geq 0$. Hence, by lemma (\ref{pre-1.4})
\begin{align*}
\int j-\int i=\int (j-i)\geq 0
\end{align*}
\end{proof}
In classical measure theory one often identifies integrable functions that agree almost everywhere and the normed space $L^1$ is then defined modulo this equivalence relation. The positive, constructive counterpart is to identify functions in the complete extension of an integration space that agree on a full set. Proposition 2.12 in \cite[p. 227]{BB85} then tells us that we can define the 1-norm modulo this equality. Since, we don't have recourse to the notion of a full set in a predicative setting, we have to introduce the 1-norm a bit differently.
\begin{theorem}\label{pre-int norm}
The relation $=_{\int}$, defined for $i,j\in I$ by
\begin{align*}
i=_{\int}j \;:\Leftrightarrow\; \int\abs{i-j}=0
\end{align*}
is an equivalence relation on $I$ and the assignment routine $\int:(I,=_{\int})\rightsquigarrow\mathbb{R}$ given by $i\mapsto\int i$ is a function. Moreover, the functions $\cdot$ and $+$ make $(I,=_{\int})$ into a $\mathbb{R}$-vector space with neutral element $0\cdot p$, where $p\in I$ is s.t. $\int p=1$, which exists by (PMS3). The function
\begin{align*}
\norm{\_}_1:\;I&\rightarrow\mathbb{R}_{\geq 0} \\
\norm{i}_1&:=\int\abs{i}
\end{align*} 
Defines a norm on $(I,=_{\int};\;\cdot,+,0\cdot p)$.
\end{theorem}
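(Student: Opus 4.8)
The plan is to isolate two elementary consequences of (PIS1) together with the earlier lemmas, and then route every remaining claim through them. Writing $i-j:=i+(-1)\cdot j$, one first checks from (PIS1) that $\int(a\cdot v)=a\int v$, and hence $\int(u-w)=\int u-\int w$ for all scalars and all $u,w\in I$ (a single-term scalar law is recovered by padding with a $0\cdot v$ summand, whose index is $=_I$-equal to the original). The crucial observation, call it (Null), is that if $v\in I$ satisfies $f_v(x)=0$ for every $x\in\lambda_0(v)$, then $\int v=0$; this follows by applying lemma (\ref{pre-1.4}) to $v$ and to $(-1)\cdot v$. From (Null) I would extract the workhorse lemma: whenever $f_u$ and $f_w$ agree on the common domain $\lambda_0(u)\cap\lambda_0(w)$, the index $\abs{u-w}$ has associated function identically $0$ on its domain, so $\int\abs{u-w}=0$, i.e. $u=_{\int}w$. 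In particular $=_I$ refines $=_{\int}$.

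Reflexivity of $=_{\int}$ is then immediate from (Null); symmetry holds because $\abs{i-j}$ and $\abs{j-i}$ carry the same associated function, so lemma (\ref{pre-1.4.2}) yields equal integrals; and transitivity follows from the pointwise triangle inequality $\abs{f_i-f_k}\leq\abs{f_i-f_j}+\abs{f_j-f_k}$ together with lemma (\ref{pre-1.4.2}), the linearity of $\int$, and lemma (\ref{pre-1.4}) for non-negativity. That $\int$ descends to a function on $(I,=_{\int})$ is precisely lemma (\ref{pre-1.5}): if $\int\abs{i-j}=0$ then $\abs*{\int(i-j)}\leq\int\abs{i-j}=0$, whence $\int i=\int j$. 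The same triangle-inequality pattern shows that $+$, $\cdot$ and $\abs{\_}$ respect $=_{\int}$ (for instance $f_{\abs{(i+j)-(i'+j')}}\leq f_{\abs{i-i'}}+f_{\abs{j-j'}}$ on the common domain), so the quotient operations are well defined.

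For the vector-space axioms I would verify each identity at the level of associated functions on the appropriate common domain and then conclude via the workhorse lemma: associativity, commutativity, distributivity, and the scalar laws all hold because both sides share their $f$ on the relevant double or triple intersection of domains. The norm axioms are equally mechanical. Non-negativity is lemma (\ref{pre-1.4}); homogeneity uses $f_{\abs{a\cdot i}}=\abs{a}f_{\abs{i}}$ with lemma (\ref{pre-1.4.2}) and the scalar law; the triangle inequality uses $f_{\abs{i+j}}\leq f_{\abs{i}+\abs{j}}$ with lemma (\ref{pre-1.4.2}); well-definedness of $\norm{\_}_1$ is immediate once $\abs{\_}$ and $\int$ are known to respect $=_{\int}$; and definiteness is again (Null): $\norm{i}_1=0$ means $\int\abs{i-0\cdot p}=\int\abs{i}=0$ (the two integrands share an associated function on their common domain), which is exactly $i=_{\int}0\cdot p$.

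The main obstacle is not any single computation but the bookkeeping of domains. Because the operations on $I$ produce partial functions whose domains are intersections, naive function-level identities fail for the neutral element and the additive inverse: $i+0\cdot p$ has domain $\lambda_0(i)\cap\lambda_0(p)$, strictly smaller than $\lambda_0(i)$, so it is \emph{not} $=_I$-equal to $i$. The resolution — and the reason the quotient should be taken by $=_{\int}$ rather than $=_I$ — is that $=_{\int}$ only ever compares the integrand $\abs{u-w}$ on its own, possibly shrunken, domain, where it vanishes; the observation (Null) absorbs exactly these domain mismatches. I would therefore state the workhorse lemma in terms of agreement on the common domain only, after which the neutral-element and inverse axioms go through despite the domains not matching on the nose.
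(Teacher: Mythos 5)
Your proof is correct and follows essentially the same route as the paper: reflexivity, symmetry and transitivity via lemmas (\ref{pre-1.4}) and (\ref{pre-1.4.2}), descent of $\int$ via lemma (\ref{pre-1.5}), the vector-space laws at the level of associated functions, and the neutral element and inverses only up to $=_{\int}$. Your explicit ``(Null)'' observation and the attention to the domain mismatch for $i+0\cdot p$ make precise exactly the point the paper handles by treating the neutral element and additive inverses separately from the other axioms, so this is a welcome clarification rather than a deviation.
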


\begin{proof}
First, we have to check that $=_{\int}$ defines an equivalence relation on $I$. Reflexivity follows by (\ref{pre-1.4}) and from the fact that $f_{\abs{i-i}}=\abs{f_i-f_i}=0$ for all $i\in I$. For $i,j\in I$ we have $\abs{f_i-f_j}\eqpf\abs{f_j -f_i}$ and thus $\abs{i-j}=_I\abs{j-i}$, since $\bm\Lambda$ is an $I$-set of partial functions. This establishes the symmetry of $=_{\int}$. For transitivity let $i,j,k\in I$ s.t $i=_{\int}j$ and $j=_{\int}k$. For any $x\in\lzi\cap\lzj\cap\lzk$ we have that
\begin{align*}
f_{\abs{i-j}}(x)=\abs{f_i(x)-f_k(x)+f_k(x)-f_j(x)}\leq f_{\abs{i-k}}(x)+f_{\abs{j-k}}(x)
\end{align*}
and thus by lemma (\ref{pre-1.4.2})
\begin{align*}
0\leq\int\abs{i-j}=\int\abs{i-k+k-j}\leq\int\abs{i-k}+\int\abs{k-j}=0
\end{align*}
which means that $i=_{\int}j$. For $i=_{\int}j$, we have $0\leq\abs*{\int i-\int j}\leq\int\abs{i-j}=0$ by lemma (\ref{pre-1.5}), and hence $\int i=\int j$ which proves that $\int:(I,=_{\int})\rightsquigarrow\mathbb{R}$ is a function.

It follows directly from (PIS1) that $+,\cdot$ satisfy the associativity, commutativity and distributivity laws of a vector-space on the set $(I,=_I)$. Since for all $i,j\in I$ if $i=_I j$ then $\int i={\int}j$ by lemma (\ref{pre-1.4}), it follows that these laws also hold for $(I,=_{\int})$. It remains to check that $0\cdot p$ is indeed a neutral element that and additive inverses exist. But this follows from $i+0\cdot p=_{\int}i$ and $i-i=_{\int}0\cdot p$ for all $i\in I$, which we get using lemma (\ref{pre-1.4}).

It remains to check that $\norm{\_}_1$ defines a norm $(I,=_{\int})$. Let $i\in I$, we have to show that
\begin{align*}
\norm{i}_1=0 \;\Leftrightarrow\; i=_{\int}0\cdot p
\end{align*}
Using the fact that $\bm\Lambda$ is an $I$-set of functions, we get that $\abs{i-0\cdot p}=_I\abs{i}-0\cdot p$, which by lemma (\ref{pre-1.4.2}) and (PIS1) gives us the desired result. The other norm axioms can be obtained similarly.
\end{proof}

\subsection{The pre-integration space of canonically integrable functions}
In our predicative setting we don't have that the totality of integrable functions forms a set. However, in Bishop-Cheng measure theory two integrable functions are identified if they agree on a full set. Each integrable function $f$ has a representation $(f_n)_n$ and agrees with the \emph{canonically integrable function}\footnote{This terminology is due to Bas Spitters, see \cite[p. 24]{Spitters:phd}.} $\sum_n f_n$ on a full set, namely the domain of the canonically integrable function. To construct the complete extension as the metric completion w.r.t. the 1-norm it thus suffices to only consider the canonically integrable functions. In this section we will show that we can actually construct the set of canonically integrable functions and construct the corresponding pre-integration space that will be the complete extension.

\begin{definition}
Let  $(X,I,L,\int )$ be an pre-integration space. We define the set of \emph{representations} to be the totality
\begin{align*}
I_1:=\bigg\{\;\alpha\in\mathbb{F}(\mathbb{N},I) \;:\; \sum_{n=1}^\infty\int\abs{\alpha(n)}\text{ exists}\;\bigg\}
\end{align*}
together with the equality
	\begin{align*}
	\alpha =_{I_1}\beta \;:\Leftrightarrow\; \Big(F_\alpha,e_{F_\alpha},\sum_n f_{\alpha(n)}\Big)\eqpf \Big(F_\beta,e_{F_\beta},\sum_n f_{\beta(n)}\Big)
	\end{align*}
where for $\alpha,\beta\in I_1$
	\begin{align*}
	F_\alpha:=\bigg\{\; x\in\bigcap_n\lambda_0\big(\alpha(n)\big)\;:\; \sum_n\abs{f_{\alpha(n)}(x)}\text{ exists}\;\bigg\}
	\end{align*}
with embedding $e_{F_\alpha}:F_\alpha\hookrightarrow X$ induced by the embeddings $\varepsilon_{\alpha(n)}$ and $(F_\beta,e_{F_\beta})$ is defined accordingly. We define the set of \emph{canonically integrable functions} to be the $I_1$-family of partial functions $\bm\Lambda_1=(\nu_0,\nu_1,E,G)$ given by
\begin{itemize}
\item $\nu_0:I_1\rightsquigarrow\vnull$ is the assignment routine given by $\nu_0(\alpha):=F_\alpha$.
\item $E:\bigcurlywedge_{\alpha\in I_1}\mathbb{F}(\nu_0(\alpha),X)$ is the dependent assignment routine that maps each $\alpha\in I_1$ to the embedding $e_\alpha:=e_{F_\alpha}$.
\item $\nu_1:\bigcurlywedge_{(\alpha,\beta)\in D(I_1)}\mathbb{F}(\nu_0(\alpha),\nu_0(\beta))$ is the dependent assignment routine that maps each $(\alpha,\beta)\in D(I_1)$ to the function $\nu_{\alpha\beta}:\nu_0(\alpha)\rightarrow\nu_0(\beta)$ s.t. $\nu_{\alpha\alpha}:=\id_{\nu_0(\alpha)}$ and $(\nu_{\alpha\beta},\nu_{\beta\alpha})$ witnesses the equality $\alpha=_{I_1}\beta$.\footnote{Here we use the fact that two functions witnessing an equality of subsets are equal.}
\item $G:\bigcurlywedge_{\alpha\in I_1}\mathbb{F}(\nu_0(\alpha),\mathbb{R})$ is the dependent assignment routine that maps each $\alpha\in I_1$ to the function $g_\alpha:\nu_0(\alpha)\rightarrow\mathbb{R}$ that is given by $g_\alpha(x):=\sum_{n=1}^\infty g_{\alpha(n)}(x)$ for each $x\in\nu_0(\alpha)$.
\end{itemize}
\end{definition}

\begin{remark}\label{basic prop of int fcts}
It follows directly from the definition of $=_{I_1}$ that $\bm\Lambda_1$ is an $I_1$-set of partial functions. Furthermore the assignment routine 
\begin{align*}
h:I&\rightsquigarrow I_1 \\
i&\mapsto\big(i,0\cdot i,0\cdot i, ...\big)
\end{align*}
defines an embedding since for $i,j\in I$ we have
\begin{align*}
i=_I j \;&\Leftrightarrow\; f_i\eqpf f_j \\
&\Leftrightarrow \bigg(\lzi,\varepsilon_i, f_i+\sum_{n=2}^\infty 0\cdot f_i\bigg)\eqpf \bigg(\lzj,\varepsilon_j, f_j+\sum_{n=2}^\infty 0\cdot f_j\bigg) \\
&\Leftrightarrow h(i)=_{I_1} h(j)
\end{align*}
since one can easily verify that $\lzi\eqsubset\nu_0(h(i))$ and $\lzj\eqsubset\nu_0(h(j))$.

Let $\alpha,\beta\in I_1$ and $a\in\mathbb{R}$. We can define the following functions 
\begin{align*}
&\_+_1\_:I_1\times I_1\rightarrow I_1 \\
&\;\;\alpha +_1\beta :=\mathsmaller{\mathsmaller{\big(\;\alpha(1),\;\beta(1),\;\alpha(2),\;\beta(2),...\big)}}, \\
&\_\cdot_1\_:\mathbb{R}\times I_1 \rightarrow I_1 \\
&\;\;a\cdot_1\alpha:=\mathsmaller{\mathsmaller{\big(\;a\cdot\alpha(1),\;a\cdot\alpha(2),...\big)}}, \\
&\abs{\_}_1:I_1 \rightarrow I_1 \\
&\;\;\abs{\alpha}_1:=\mathsmaller{\mathsmaller{\big(\;\abs{\alpha(1)},\;\alpha(1),\;(-1)\cdot\alpha(1),\;\abs{\alpha(1)+\alpha(2)}-\abs{\alpha(1)},\;\alpha(2),\;(-1)\cdot\alpha(2),\;\abs{\alpha(1)+\alpha(2)+\alpha(3)}-\abs{\alpha(1)+\alpha(2)},...\big)}}, \\
&\wedge_1^1:I_1 \rightarrow I_1 \\
&\;\;\wedge_1^1(\alpha):=\mathsmaller{\mathsmaller{\big(\;1\wedge \alpha(1),\;\alpha(1),\;(-1)\cdot\alpha(1),\;1\wedge(\alpha(1)+\alpha(2))-1\wedge(\alpha(1)),\;\alpha(2),\;(-1)\cdot\alpha(2),\;1\wedge(\alpha(1)+\alpha(2)+\alpha(3))-1\wedge(\alpha(1)+\alpha(2)),...\big)}}
\end{align*}
See \cite[p. 224]{BB85} to check that
\begin{itemize}
\item $g_{\alpha +_1\beta}\eqpf g_\alpha + g_\beta$
\item $g_{a\cdot_1\alpha}\eqpf a g_\alpha$
\item $g_{\abs{\alpha}_1}\eqpf\abs{g_\alpha}$
\item $g_{\wedge_1^1(\alpha)}\eqpf g_\alpha\wedge 1$
\end{itemize}
Note that for construction of these sequences no choice principles were needed. Furthermore we get that these maps are compatible with $h:I\hookrightarrow I_1$, i.e. for $i,j\in I$ and $a\in\mathbb{R}$ we have that
\begin{itemize}
\item $h(i+j)=_{I_1}h(i) +_1 h(j)$
\item $h(a\cdot i)=_{I_1}a\cdot_1 h(i)$
\item $h(\abs{i})=_{I_1}\abs{h(i)}_1$
\item $h(\wedge_1(i))=_{I_1}\wedge_1^1(h(i))$
\end{itemize}
In the following we will thus drop the subscript  $1$ for all of these functions.

Finally the assignment routine $\int_1:I_1\rightsquigarrow\mathbb{R}$ given by $\int_1\alpha:=\sum_n\int\alpha(n)$ is a function. To see this let $\alpha=_{I_1}\beta$ and assume that $\int_1\alpha\neq_{\mathbb{R}}\int_1\beta$. Take $a\in\mathbb{R}$ and $N\in\mathbb{N}$ s.t.
\begin{itemize}
\item $0<3a<\abs*{\sum_n\int \alpha(n)-\sum_n\int\beta(n)}$
\item $\sum_{n=N+1}^\infty \int\abs{\alpha(n)} < a$
\item $\sum_{n=N+1}^\infty\int\abs{\alpha(n)-\beta(n)}<a$
\end{itemize}
Then 
\begin{align*}
\sum_{n=N+1}^\infty\int\abs{\alpha(n)}&\;+\sum_{n=N+1}^\infty\int\abs{\alpha(n)-\beta(n)} <2a \\[1em]
&<\abs*{\sum_{n=1}^\infty\int\alpha(n)-\sum_{n=1}^\infty\int\beta(n)} -a \\[1em]
&<\abs*{\sum_{n=1}^N\int\big(\alpha(n)-\beta(n)\big)}\;+\sum_{n=N+1}^\infty\int\abs{\alpha(n)-\beta(n)}-a \\[1em]
&<\abs*{\sum_{n=1}^N\int\big(\alpha(n)-\beta(n)\big)}\leq\int\abs*{\sum_{n=1}^N\big(\alpha(n)-\beta(n)\big)}
\end{align*}
Thus by (PIS2) for the pre-integration space $(X,I,\bm\Lambda,\int)$ there is a $x\in\big(\bigcap_n\lambda_0(\alpha(n))\big)\cap\big(\bigcap_n\lambda_0(\beta(n))\big)$ s.t. $\sum_n\abs{f_{\alpha(n)}(x)}$ and $\sum_n\abs{f_{\alpha(n)}(x)-f_{\beta(n)}(x)}$ both exist and we have
\begin{align*}
\sum_{n=N+1}^\infty\abs{f_{\alpha(n)}(x)}\; +\sum_{n=N+1}^\infty\abs{f_{\alpha(n)}(x)-f_{\beta(n)}(x)}<\abs*{\sum_{n=1}^N\big(f_{\alpha(n)}(x)-f_{\beta(n)}(x)\big)}
\end{align*}
Hence, $\sum_n\abs{f_{\beta(n)}(x)}$ exists and thus
\begin{align*}
0\leq\sum_{n=N+1}^\infty\abs{f_{\alpha(n)}(x)}&<\abs*{\sum_{n=1}^N\big(f_{\alpha(n)}(x)-f_{\beta(n)}(x)\big)}\;-\sum_{n=N+1}^\infty\abs{f_{\alpha(n)}(x)-f_{\beta(n)}(x)} \\[1em]
&\leq \abs*{\sum_{n=1}^\infty\big(f_{\alpha(n)}(x)-f_{\beta(n)}(x)\big)}
\end{align*}
But this is a contradiction since $\alpha=_{I_1}\beta$ implies that $\sum_n f_{\alpha(n)}(x)=_{\mathbb{R}}\sum_n f_{\beta(n)}(x)$. It is also clear that $\int_1 h(i)=\int i$ for all $i\in I$. We will thus drop the subscript $1$ here as well.
\end{remark}

We now prove some basic properties of $I_1$ and $\bm\Lambda_1$. Again, all of the arguments are taken from section 2 of chapter 6 of \cite{BB85}.

\begin{lemma}\label{pre-2.3}
$\forall\alpha\in I_1:\;\abs*{\int\alpha}\leq\int\abs{\alpha}$
\end{lemma}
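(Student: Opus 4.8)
The plan is to reduce the statement, which concerns the intricately-defined element $\abs{\alpha}\in I_1$ and its integral, to the corresponding inequality in the \emph{original} index set $I$, where it is already available as Lemma~\ref{pre-1.5}. The bridge is provided by the finite partial sums of $\alpha$ formed inside $I$.

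First I would set $s_k:=\sum_{n=1}^k\alpha(n)\in I$ for each $k\in\mathbb{N}$, using the addition function $+$ on $I$ (which is a function by the remark following Definition~\ref{preintegration space}). Since the integral on $I$ is linear by (PIS1) and respects $=_I$, we get $\int s_k=\sum_{n=1}^k\int\alpha(n)$. Because $\alpha\in I_1$, the series $\sum_n\int\alpha(n)$ converges absolutely, with a modulus controlled by that of $\sum_n\int\abs{\alpha(n)}$, and its limit is by definition $\int\alpha$. Hence $\int\alpha=\lim_{k\to\infty}\int s_k$, with no appeal to countable choice.

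The key step is to identify $\int\abs{\alpha}$ with $\lim_{k\to\infty}\int\abs{s_k}$, where $\abs{s_k}\in I$ denotes the element with $f_{\abs{s_k}}\eqpf\abs{f_{s_k}}$ obtained from the function $\abs{\_}$ on $I$. Here I would exploit the telescoping shape of the sequence $\abs{\alpha}_1$ from Remark~\ref{basic prop of int fcts}: apart from its first entry $\abs{\alpha(1)}$, it is built from blocks of three consecutive entries, namely $\alpha(n)$, $(-1)\cdot\alpha(n)$, and $\abs{s_{n+1}}-\abs{s_n}$. Applying $\int$ and using linearity, the first two entries of each block cancel and the third telescopes, so the partial sum of $\abs{\alpha}_1$ up to the entry ending the $k$-th block equals exactly $\int\abs{s_k}$, as one checks by induction. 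Since $\abs{\alpha}_1\in I_1$ the series $\sum_m\int\abs{\alpha}_1(m)$ converges to $\int\abs{\alpha}$, and therefore the selected subsequence of its partial sums converges to the same limit; this yields $\int\abs{\alpha}=\lim_k\int\abs{s_k}$.

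Finally I would invoke Lemma~\ref{pre-1.5} for each $s_k\in I$ to obtain $\abs{\int s_k}\leq\int\abs{s_k}$, and then pass to the limit: continuity of $\abs{\,\cdot\,}$ on $\mathbb{R}$ gives $\abs{\int\alpha}=\abs{\lim_k\int s_k}=\lim_k\abs{\int s_k}$, while non-strict inequalities are preserved under limits, so $\abs{\int\alpha}\leq\lim_k\int\abs{s_k}=\int\abs{\alpha}$. The main obstacle is the middle paragraph, the telescoping identity $\int\abs{\alpha}=\lim_k\int\abs{s_k}$: one must unwind the definition of $\abs{\alpha}_1$ carefully, verify that the chosen subsequence of partial sums reproduces $\int\abs{s_k}$, and note that convergence of this subsequence rests only on $\abs{\alpha}_1\in I_1$. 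Once this identification is in place, the remaining manipulations with limits and Lemma~\ref{pre-1.5} are routine.
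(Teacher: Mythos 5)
Your proposal is correct and follows essentially the same route as the paper: both identify $\int\abs{\alpha}$ with $\lim_{n}\int\abs*{\sum_{k=1}^{n}\alpha(k)}$ and then reduce to Lemma~\ref{pre-1.5} applied to the finite partial sums in $I$. The only differences are cosmetic --- you spell out the telescoping structure of $\abs{\alpha}_1$ that the paper compresses into ``from the definition,'' and you pass to the limit directly where the paper argues by contradiction; both steps are constructively unproblematic.
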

\begin{proof}
From the definition we get that
\begin{align*}
\int\abs{\alpha}=\lim_{n\rightarrow\infty}\int\abs*{\sum_{k=1}^n\alpha(k)}
\end{align*}
Now assume that $\abs*{\int\alpha}>\int\abs{\alpha}$, then there is $n$ large enough s.t.
\begin{align*}
\int\abs*{\sum_{k=1}^n\alpha(k)} <\abs*{\sum_{k=1}^n\int\alpha(k)}=\abs*{\int\sum_{k=1}^n\alpha(k)}
\end{align*}
This is a contradiction to lemma (\ref{pre-1.5}).
\end{proof}

\begin{lemma}\label{pre-2.7}
Let $\alpha\in I_1$ be s.t. for all $x\in \nu_0(\alpha)$ we have $g_\alpha(x)\geq 0$, then $\int\alpha\geq 0$.
\end{lemma}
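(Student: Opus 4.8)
The plan is to argue by contradiction using (PIS2) for the underlying pre-integration space $(X,I,\bm\Lambda,\int)$, mimicking the proof of lemma (\ref{pre-1.4}) but applied to a suitable finite partial sum together with the absolutely convergent tail of the representation $\alpha$. Since for reals $\int\alpha\geq 0$ is equivalent to $\neg(\int\alpha<0)$, it suffices to assume $\int\alpha<0$, i.e. $\sum_n\int\alpha(n)<0$, and produce a point of $\nu_0(\alpha)=F_\alpha$ at which $g_\alpha$ is strictly negative.

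First I would, for each $N\in\mathbb{N}$, set $s_N:=\alpha(1)+\dots+\alpha(N)\in I$ using the addition on $I$, so that $f_{s_N}\eqpf\sum_{n=1}^N f_{\alpha(n)}$ and $\int s_N=\sum_{n=1}^N\int\alpha(n)$ by (PIS1). I would then consider the single index $-s_N\in I$ (so that $f_{-s_N}=-f_{s_N}$ and $\lambda_0(-s_N)\eqsubset\lambda_0(s_N)$) and the sequence $\beta\in\mathbb{F}(\mathbb{N},I)$ given by $\beta(m):=\abs{\alpha(N+m)}$. Then $f_{\beta(m)}=\abs{f_{\alpha(N+m)}}\geq 0$ for all $m$, and since $\alpha\in I_1$ the series $\ell:=\sum_m\int\beta(m)=\sum_{m>N}\int\abs{\alpha(m)}$ exists. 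Because $\sum_{m>N}\int\abs{\alpha(m)}\to 0$ while $\int(-s_N)=-\sum_{n=1}^N\int\alpha(n)\to-\int\alpha>0$, I can, using the modulus of convergence of $\sum_n\int\abs{\alpha(n)}$ and the witness for $\int\alpha<0$, choose $N$ large enough that $\ell<\int(-s_N)$.

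Applying (PIS2) to $i:=-s_N$ and $\beta$ then yields a point $x\in\lambda_0(-s_N)\cap\bigcap_m\lambda_0(\beta(m))=\big(\bigcap_{n=1}^N\lambda_0(\alpha(n))\big)\cap\big(\bigcap_{m>N}\lambda_0(\alpha(m))\big)=\bigcap_n\lambda_0(\alpha(n))$ such that $\ell':=\sum_{m>N}\abs{f_{\alpha(m)}(x)}$ exists and $\ell'<f_{-s_N}(x)=-\sum_{n=1}^N f_{\alpha(n)}(x)$. Since the first $N$ terms form a finite sum, the existence of $\ell'$ shows that $\sum_n\abs{f_{\alpha(n)}(x)}$ exists as well, hence $x\in F_\alpha=\nu_0(\alpha)$ and $g_\alpha(x)=\sum_n f_{\alpha(n)}(x)$ is defined. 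Splitting this series at $N$ and using $\sum_{m>N}f_{\alpha(m)}(x)\leq\sum_{m>N}\abs{f_{\alpha(m)}(x)}=\ell'<-\sum_{n=1}^N f_{\alpha(n)}(x)$ gives $g_\alpha(x)=\sum_{n=1}^N f_{\alpha(n)}(x)+\sum_{m>N}f_{\alpha(m)}(x)<0$, contradicting the hypothesis $g_\alpha\geq 0$ on $\nu_0(\alpha)$.

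The main obstacle I anticipate is the bookkeeping needed to stay predicative and choice-free: one must pick $N$ \emph{explicitly} from the given moduli (rather than appealing to ``for large $N$''), and one must verify that the point $x$ delivered by (PIS2) genuinely lands in the domain $F_\alpha$ --- this hinges precisely on the tail $\sum_{m>N}\abs{f_{\alpha(m)}(x)}$ converging, which is exactly what the conclusion of (PIS2) supplies. The remaining identifications of domains ($\lambda_0(-s_N)$, $\lambda_0(\abs{\alpha(m)})$, etc.) are routine consequences of (PIS1).
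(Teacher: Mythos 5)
Your proof is correct, and it reaches the conclusion by the same overall mechanism as the paper --- assume $\int\alpha<0$ and apply (PIS2) for the underlying pre-integration space $(X,I,\bm\Lambda,\int)$ to produce a point of $\nu_0(\alpha)$ at which $g_\alpha$ is strictly negative --- but via a genuinely different decomposition. The paper first splits each term into positive and negative parts, setting $i^{\pm}:=\frac{1}{2}\cdot\abs{i}\pm\frac{1}{2}\cdot i$ so that $f_{i^{\pm}}\eqpf f_i^{\pm}$, and then applies (PIS2) with $i:=\sum_{n=1}^N\alpha(n)^-$ and the nonnegative sequence obtained by putting together all of the $\alpha(n)^+$ with the tail $\big(\alpha(n)^-\big)_{n>N}$, where $N$ and an auxiliary $\varepsilon$ are chosen so that $\sum_{n}\int\alpha(n)^+ +\sum_{n>N}\int\alpha(n)^- <\sum_{n=1}^N\int\alpha(n)^-$. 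You instead split the series at $N$, taking $i:=-\big(\alpha(1)+\dots+\alpha(N)\big)$ and the nonnegative sequence $\big(\abs{\alpha(N+m)}\big)_m$. Both routes hinge on the same two verifications: that the point $x$ supplied by (PIS2) lies in $F_\alpha=\nu_0(\alpha)$ because the relevant absolutely convergent tail exists at $x$, and that the resulting inequality forces $g_\alpha(x)<0$. Your version is somewhat more economical --- it avoids introducing $i^{\pm}$ and needs only one convergent tail of absolute values rather than the full series of positive parts plus a tail of negative parts --- while the paper's version keeps every term of the (PIS2) sequence tied directly to a term of $\alpha$ without forming finite partial sums in $I$. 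Your explicit attention to extracting $N$ from the given moduli and to checking $x\in F_\alpha$ addresses exactly the points the paper's argument also has to handle, so there is no gap.
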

\begin{proof}
For $i\in I$ define
\begin{align*}
i^+ &:= \frac{1}{2}\cdot\abs{i}+\frac{1}{2}\cdot i \\
i^- &:= \frac{1}{2}\cdot\abs{i}-\frac{1}{2}\cdot i
\end{align*}
Then $f_{i^+}\eqpf f_i^+$ and $f_{i^-}\eqpf f_i^-$. Let $\alpha\in I_1$, then by assumption we have that $\sum_n\int\alpha(n)^+$ and $\sum_n\int \alpha(n)^-$ exist and that
\begin{align*}
\int\alpha:=\sum_{n=1}^\infty\int {\alpha(n)}=\sum_{n=1}^\infty\int {\alpha(n)}^+ -\sum_{n=1}^\infty\int {\alpha(n)}^-
\end{align*}
Furthermore, we have for $x\in \nu_0(\alpha)$ that $\sum_n \vert f_{\alpha(n)}(x)\vert$ exists if and only if $\sum_n f_{\alpha(n)}^+(x)$ and $\sum_n f_{\alpha(n)}^-(x)$ exist, in which case
\begin{align*}
\sum_{n=1}^\infty f_{\alpha(n)}^+(x) -\sum_{n=1}^\infty f_{\alpha(n)}^-(x)=g_{\alpha}(x)\geq 0
\end{align*}
Now, suppose that $\int\alpha< 0$ and take $\varepsilon>0$ s.t. $\sum_n\int {\alpha(n)}^+<\sum_n\int {\alpha(n)}^- -2\varepsilon$. Also take $N\in\mathbb{N}$ large enough s.t. $\sum_{n=N+1}^\infty\int {\alpha(n)}^-<\varepsilon$. We have that
\begin{align*}
\sum_{n=1}^\infty\int {\alpha(n)}^+ +\sum_{n=N+1}^\infty\int \alpha(n)^- < \sum_{n=1}^N\int {\alpha(n)}^- +2\sum_{n=N+1}^\infty\int {\alpha(n)}^- -2\varepsilon 
<\sum_{n=1}^N\int {\alpha(n)}^-
\end{align*}
Hence, by (PIS2) there is a $x\in \bigcap_n(\lambda_0(\alpha(n))$ s.t. $\sum_n f_{\alpha(n)}^+(x)$ and $\sum_n f_{\alpha(n)}^-(x)$ exist , and thus also $\sum_n \vert f_{\alpha(n)}(x)\vert$ converges, i.e. $x\in\nu_0(\alpha)$, and we have
\begin{align*}
\sum_{n=1}^\infty f_{\alpha(n)}^+(x)<\sum_{n=1}^N f_{\alpha(n)}^-(x)\leq \sum_{n=1}^\infty f_{\alpha(n)}^-(x)
\end{align*}
it follows that $g_\alpha(x)=\sum_{n} f_{\alpha(n)}(x)<0$, which is a contradiction and thus $\int\alpha\geqslant 0$. 
\end{proof}

\begin{lemma}\label{pre-2.8}
Let $\alpha,\beta\in I_1$ be s.t. for all $x\in\nu_0(\alpha)\cap\nu_0(\beta)$ we have that $g_\alpha(x)\leq g_\beta(x)$, then $\int \alpha\leq\int\beta$.
\end{lemma}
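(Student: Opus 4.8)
The plan is to imitate the proof of Lemma \ref{pre-1.4.2}, replacing the base-level positivity statement Lemma \ref{pre-1.4} by its $I_1$-counterpart Lemma \ref{pre-2.7}. The key observation is that the operations $+,\cdot$ on $I_1$ collected in Remark \ref{basic prop of int fcts} let me form the ``difference'' $\gamma:=\beta+(-1)\cdot\alpha\in I_1$, and that by the same remark its associated partial function satisfies $g_\gamma\eqpf g_\beta-g_\alpha$. So the whole argument reduces to checking that $g_\gamma\geq 0$ on its domain and then identifying $\int\gamma$ with $\int\beta-\int\alpha$.

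First I would record what the domain $\nu_0(\gamma)$ is. Since $\gamma$ is the interleaving of $\beta$ and $(-1)\cdot\alpha$, the absolute series $\sum_n\abs{g_{\gamma(n)}(x)}$ converges exactly when both $\sum_n\abs{g_{\beta(n)}(x)}$ and $\sum_n\abs{g_{\alpha(n)}(x)}$ converge; hence $\nu_0(\gamma)\eqsubset\nu_0(\alpha)\cap\nu_0(\beta)$. In fact this is already encoded in the statement $g_\gamma\eqpf g_\beta-g_\alpha$, because the pointwise difference $g_\beta-g_\alpha$ has domain $\nu_0(\beta)\cap\nu_0(\alpha)$ by the definition of the pointwise operations on $\mathcal{F}(X)$, and $\eqpf$ forces the domains of two equal partial functions to agree as subsets.

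Next, for $x\in\nu_0(\gamma)$ I identify $x$ with the corresponding point of $\nu_0(\alpha)\cap\nu_0(\beta)$ via the witness of $g_\gamma\eqpf g_\beta-g_\alpha$ and compute $g_\gamma(x)=g_\beta(x)-g_\alpha(x)$. By the hypothesis $g_\alpha(x)\leq g_\beta(x)$ on $\nu_0(\alpha)\cap\nu_0(\beta)$ this is $\geq 0$, so $g_\gamma\geq 0$ on all of $\nu_0(\gamma)$, and Lemma \ref{pre-2.7} yields $\int\gamma\geq 0$.

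Finally, since $\int:I_1\rightsquigarrow\mathbb{R}$ is a function with $\int\gamma=\sum_n\int\gamma(n)$ (Remark \ref{basic prop of int fcts}), the absolute convergence guaranteed by $\gamma\in I_1$ together with Lemma \ref{pre-1.5} lets me rearrange the interleaved series and obtain $\int\gamma=\int\beta-\int\alpha$. Hence $\int\beta-\int\alpha\geq 0$, i.e. $\int\alpha\leq\int\beta$. I expect the only genuinely delicate point to be the bookkeeping around domains, namely verifying cleanly that $\nu_0(\gamma)\eqsubset\nu_0(\alpha)\cap\nu_0(\beta)$ and that the rearrangement producing $\int\gamma=\int\beta-\int\alpha$ is legitimate; but both reduce to the compatibility statements already assembled in Remark \ref{basic prop of int fcts} and to absolute convergence, so no new machinery is needed.
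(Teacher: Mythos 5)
Your proposal is correct and follows essentially the same route as the paper: the paper also forms the difference $\beta-\alpha\in I_1$, observes $\nu_0(\beta-\alpha)\eqsubset\nu_0(\alpha)\cap\nu_0(\beta)$ so that $g_{\beta-\alpha}\geq 0$, and concludes via Lemma \ref{pre-2.7} that $\int\beta-\int\alpha=\int(\beta-\alpha)\geq 0$. The extra bookkeeping you flag (identifying the domain of the interleaved sequence and rearranging the series to get $\int\gamma=\int\beta-\int\alpha$) is exactly the content the paper leaves implicit, and your handling of it is sound.
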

\begin{proof}
Consider $\beta-\alpha\in I_1$, then $\nu_0(\beta-\alpha)\eqsubset\nu_0(\alpha)\cap\nu_0(\beta)$ and thus $g_{\beta-\alpha}\geq 0$. Hence, by lemma (\ref{pre-2.7})
\begin{align*}
\int \beta-\int \alpha=\int (\beta-\alpha)\geq 0
\end{align*}
\end{proof}

\begin{lemma}\label{pre-2.14}
There is a function $\phi:I_1\rightarrow\mathbb{N}\rightarrow I_1$ s.t. for every $\alpha\in I_1$ and $n\in\mathbb{N}$ we have  that $\phi(\alpha,n):=\beta=_{I_1}\alpha$ and $\sum_k\int\abs{\beta(k)}\leq\int\abs{\alpha}+2^{-n}$.
\end{lemma}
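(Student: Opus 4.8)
The plan is to obtain $\phi(\alpha,n)$ from $\alpha$ by collapsing a sufficiently long initial segment of the series into a single term, so that the resulting representation carries almost all of its $1$-norm in that first term while the remaining tail is negligible. Concretely, given $\alpha\in I_1$, let $M_\alpha:\mathbb{N}\to\mathbb{N}$ be the modulus witnessing convergence of $\sum_j\int\abs{\alpha(j)}$, so that $\sum_{j>M_\alpha(p)}\int\abs{\alpha(j)}\leq 2^{-p}$; this modulus is part of the data making $\alpha$ a member of $I_1$. Set $N_1:=M_\alpha(n+1)$ and define $\phi(\alpha,n):=\beta$ by
\[
\beta(1):=\sum_{j=1}^{N_1}\alpha(j)\in I,\qquad \beta(k):=\alpha(N_1+k-1)\ \text{for}\ k\geq 2,
\]
where the finite sum uses the operation $+$ on $I$. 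I would first record that $\beta\in I_1$: since $\sum_{k\geq 2}\int\abs{\beta(k)}=\sum_{j>N_1}\int\abs{\alpha(j)}$ is a tail of a convergent series, $\sum_k\int\abs{\beta(k)}$ exists.

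Next I would check $\beta=_{I_1}\alpha$. Writing $s_N:=\sum_{j=1}^N\alpha(j)$, the domain of $\beta(1)$ is $\bigcap_{j=1}^{N_1}\lambda_0(\alpha(j))$, so $\bigcap_k\lambda_0(\beta(k))\eqsubset\bigcap_j\lambda_0(\alpha(j))$, and for such $x$ the series $\sum_k\abs{f_{\beta(k)}(x)}$ differs from $\sum_j\abs{f_{\alpha(j)}(x)}$ only by collapsing a finite initial block, so the two converge together and $F_\beta=F_\alpha$. On this common domain $g_\beta(x)=f_{s_{N_1}}(x)+\sum_{j>N_1}f_{\alpha(j)}(x)=\sum_j f_{\alpha(j)}(x)=g_\alpha(x)$, hence the two canonically integrable functions agree and $\beta=_{I_1}\alpha$.

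The norm estimate is the heart of the argument. By construction
\[
\sum_k\int\abs{\beta(k)}=\int\abs{s_{N_1}}+\sum_{j>N_1}\int\abs{\alpha(j)},
\]
and the tail on the right is $\leq 2^{-(n+1)}$ by the choice of $N_1$. To bound $\int\abs{s_{N_1}}$ I would use the reverse triangle inequality: for $M>N$ we have $\abs{\,\abs{f_{s_M}(x)}-\abs{f_{s_N}(x)}\,}\leq\abs{f_{s_M}(x)-f_{s_N}(x)}=f_{\abs{s_M-s_N}}(x)$ pointwise, so by Lemma~\ref{pre-1.5} and Lemma~\ref{pre-1.4.2},
\[
\abs*{\int\abs{s_M}-\int\abs{s_N}}\leq\int\abs{s_M-s_N}\leq\sum_{j=N+1}^M\int\abs{\alpha(j)}.
\]
Thus $\big(\int\abs{s_N}\big)_N$ is Cauchy with modulus $M_\alpha$, and since $\int\abs{\alpha}=\lim_N\int\abs{s_N}$ (the identity established in the proof of Lemma~\ref{pre-2.3}), letting $M\to\infty$ gives $\int\abs{s_{N_1}}\leq\int\abs{\alpha}+2^{-(n+1)}$. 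Combining the two bounds yields $\sum_k\int\abs{\beta(k)}\leq\int\abs{\alpha}+2^{-n}$, as required.

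Finally I would argue that $\phi$ is a genuine function. Extensionality is automatic: for any $\alpha$ we have $\phi(\alpha,n)=_{I_1}\alpha$, so $\alpha=_{I_1}\alpha'$ forces $\phi(\alpha,n)=_{I_1}\phi(\alpha',n)$ by transitivity, irrespective of how the moduli are chosen. The one delicate point—and the reason the statement is non-trivial in this predicative, choice-free setting—is that the cut-off $N_1$ must be produced explicitly from data carried by $\alpha$ rather than by an unwitnessed appeal to countable choice; this is exactly where the convention that convergence in $I_1$ is modulated does the work, since it supplies $M_\alpha$ and hence the canonical value $N_1:=M_\alpha(n+1)$.
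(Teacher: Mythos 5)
Your construction is exactly the paper's: cut at $N:=M_\alpha(n+1)$ using the modulus carried by $\alpha$, collapse the initial block into a single term of $I$, and bound $\int\abs{\beta(1)}$ by comparing with $\int\abs{\sum_{k\le m}\alpha(k)}$ via the reverse triangle inequality and then letting $m\to\infty$ using the limit identity from the proof of Lemma~\ref{pre-2.3}. The argument is correct and essentially identical to the paper's proof (you merely spell out the equality $\beta=_{I_1}\alpha$ and the extensionality of $\phi$, which the paper leaves implicit).
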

\begin{proof}
Let $\alpha\in I_1$ and $n\in\mathbb{N}$. Let  $M:\mathbb{N}\rightarrow\mathbb{N}$ be the modulus of converengence of the sequence $\big(\sum_{k=1}^m\int\abs{\alpha(k)}\big)_{m=1}^\infty$ and $N:=M(n+1)$, i.e. we have $\sum_{k=N+1}^\infty \int\abs{\alpha(k)} \leq 2^{-(n+1)}$. Let $\beta\in \mathbb{F}(\mathbb{N},I)$ be given by
\begin{align*}
\Big(\sum_{k=1}^N {\alpha(k)},\alpha(N+1),\alpha(N+2),...\Big)
\end{align*}
Then $\sum_k\int\abs{\beta(k)}\leq\sum_k\int\abs{\alpha(k)}$ and thus $\beta\in I_1$, i.e. $\phi(\alpha,n):=\beta$ is well-defined.  Furthermore, we clearly have $\nu_0(\beta)\eqsubset\nu_0(\alpha)$ and $g_\beta\eqpf g_\alpha$.

For $m>N$ we obtain, by using lemma (\ref{pre-2.8}), that
\begin{align*}
\sum_{k=1}^\infty\int\abs{\beta(k)} &\leq \int \abs{\beta(1)} + \frac{1}{2^{n+1}} \\[1em]
&\leq \int\Bigg(\abs*{\sum_{k=1}^m \alpha(k)}\Bigg) + \int \Bigg(\abs*{\sum_{k=1}^N \alpha(k)}-\abs*{\sum_{k=1}^m {\alpha(k)}}\Bigg) +\frac{1}{2^{n+1}} \\[1em]
&\leq\int\Bigg(\abs*{\sum_{k=1}^m {\alpha(k)}}\Bigg) + \int \Bigg(\abs*{\sum_{k=N+1}^m {\alpha(k)}}\Bigg) +\frac{1}{2^{n+1}}  \\[1em]
&\leq \int\Bigg(\abs*{\sum_{k=1}^m {\alpha(k)}}\Bigg) +\sum_{k=N+1}^m\int\abs{ {\alpha(k)}} +\frac{1}{2^{n+1}} \\[1em]
&< \int\Bigg(\abs*{\sum_{k=1}^m {\alpha(k)}}\Bigg)+2^{-n}
\end{align*}
By the proof of lemma (\ref{pre-2.3}) we have that
\begin{align*}
\lim_{m\rightarrow\infty}\int\Bigg(\abs*{\sum_{k=1}^m {\alpha(k)}}\Bigg)=\int\abs{\alpha}
\end{align*}
which finishes the proof.
\end{proof}

We are now able to prove the predicative, constructive version of Lebesgue's series theorem. The proof generally follows the proof theorem 2.15 in \cite{BB85}, but we have to be a bit more cautious, since we don't have a notion of a full set at hand and thus have to work around arguments that rely on properties of full sets. Again, using lemma (\ref{pre-2.14}) instead of a more straightforward translation of 2.14 in \cite{BB85} allows us to avoid countable choice.
\newpage
\begin{theorem}\label{pre-2.15}
Let $\Gamma\in\mathbb{F}(\mathbb{N},I_1)$ be s.t. $\sum_n\int\abs{\Gamma(n)}$ exists. Then there is an $\alpha\in I_1$ s.t.
\begin{align*}
&\nu_0(\alpha)\subseteq \bigg\{\; x\in\bigcap_{n=1}^\infty\nu_0\big(\Gamma(n)\big)\;:\; \sum_{n=1}^\infty\abs{g_{\;\Gamma(n)}(x)}\text{ exists}\;\bigg\} \\[1em]
&\text{and}\quad \forall x\in\nu_0(\alpha):\; g_\alpha(x)=\sum_{n=1}^\infty g_{\;\Gamma(n)}(x)
\end{align*}
Furthermore, for any $\alpha\in I_1$ fulfilling the above condition we have
\begin{align*}
\lim_{N\rightarrow\infty}\int\abs{\alpha-\sum_{n=1}^N\Gamma(n)} =0
\end{align*}
\end{theorem}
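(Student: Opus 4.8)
The plan is to prove the two assertions separately: first construct an explicit $\alpha$ meeting the domain and value conditions, and then establish the limit for an \emph{arbitrary} $\alpha$ satisfying those conditions. For the construction, recall that each $\Gamma(n)\in I_1$ is itself a sequence $(\Gamma(n)(k))_{k}$ in $I$, so the natural idea is to flatten the double sequence $(\Gamma(n)(k))_{n,k}$ into a single element of $\mathbb{F}(\mathbb{N},I)$ via the bijection $\varphi$ of Section \ref{reals}. The obstacle is that membership in $I_1$ requires $\sum_m\int\abs{\alpha(m)}$ to converge, whereas naive flattening only controls $\sum_n\sum_k\int\abs{\Gamma(n)(k)}$, which may diverge since in general only $\int\abs{\Gamma(n)}\leq\sum_k\int\abs{\Gamma(n)(k)}$ holds. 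This is exactly where Lemma \ref{pre-2.14} is needed: I would first replace each $\Gamma(n)$ by $\Gamma'(n):=\phi(\Gamma(n),n)=_{I_1}\Gamma(n)$, so that $\sum_k\int\abs{\Gamma'(n)(k)}\leq\int\abs{\Gamma(n)}+2^{-n}$ and hence $\sum_n\sum_k\int\abs{\Gamma'(n)(k)}\leq\sum_n\int\abs{\Gamma(n)}+1<\infty$. Setting $\alpha(m):=\Gamma'(\mathsf{pr}_1(\varphi(m)))(\mathsf{pr}_2(\varphi(m)))$, so that $\alpha$ is the $\varphi$-rearrangement of $(\Gamma'(n)(k))_{n,k}$, an application of Lemma \ref{doubleseries} to the nonnegative reals $\int\abs{\Gamma'(n)(k)}$ shows $\sum_m\int\abs{\alpha(m)}$ converges, so $\alpha\in I_1$; applying it to the signed reals $\int\Gamma'(n)(k)$ gives additionally $\int\alpha=\sum_n\int\Gamma(n)$.

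To verify the domain and value conditions, fix $x\in\nu_0(\alpha)$, so that $x\in\bigcap_m\lambda_0(\alpha(m))$ and $\sum_m\abs{f_{\alpha(m)}(x)}$ exists. Since this is precisely the $\varphi$-rearrangement of the nonnegative double sequence $\abs{f_{\Gamma'(n)(k)}(x)}$, the converse direction of Lemma \ref{doubleseries} yields, for each $n$, that $\sum_k\abs{f_{\Gamma'(n)(k)}(x)}$ exists, i.e. $x\in\nu_0(\Gamma'(n))\eqsubset\nu_0(\Gamma(n))$, and that $\sum_n\sum_k\abs{f_{\Gamma'(n)(k)}(x)}$ converges with $\sum_n\sum_k f_{\Gamma'(n)(k)}(x)=\sum_m f_{\alpha(m)}(x)$. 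Writing $g_{\Gamma(n)}(x)=g_{\Gamma'(n)}(x)=\sum_k f_{\Gamma'(n)(k)}(x)$ and using $\abs{g_{\Gamma(n)}(x)}\leq\sum_k\abs{f_{\Gamma'(n)(k)}(x)}$, a comparison shows $\sum_n\abs{g_{\Gamma(n)}(x)}$ exists, so $x$ lies in the claimed set, and $g_\alpha(x)=\sum_m f_{\alpha(m)}(x)=\sum_n g_{\Gamma(n)}(x)$ as required.

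For the limit, let $\alpha\in I_1$ be any element satisfying the two conditions. The inclusion $\nu_0(\alpha)\subseteq\nu_0(\Gamma(n))$ for all $n$ forces $\nu_0(\delta_N)=\nu_0(\alpha)$ for $\delta_N:=\alpha-\sum_{n=1}^N\Gamma(n)\in I_1$, and there $g_{\delta_N}(x)=\sum_{n=N+1}^\infty g_{\Gamma(n)}(x)$ by the value condition. I would then apply the already-proven first part of the theorem to the tail sequence $(\abs{\Gamma(N+1)},\abs{\Gamma(N+2)},\dots)$—whose hypothesis holds because $\sum_{n>N}\int\abs{\Gamma(n)}$ is a tail of a convergent series—to obtain $\beta_N\in I_1$ with $g_{\beta_N}(x)=\sum_{n>N}\abs{g_{\Gamma(n)}(x)}$ on $\nu_0(\beta_N)$ and $\int\beta_N=\sum_{n=N+1}^\infty\int\abs{\Gamma(n)}$. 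On the common domain one has $g_{\abs{\delta_N}}(x)=\abs{g_{\delta_N}(x)}\leq\sum_{n>N}\abs{g_{\Gamma(n)}(x)}=g_{\beta_N}(x)$, so Lemma \ref{pre-2.8} gives $\int\abs{\delta_N}\leq\int\beta_N=\sum_{n=N+1}^\infty\int\abs{\Gamma(n)}$. Since $\int\abs{\delta_N}\geq 0$ by Lemma \ref{pre-2.7} and the right-hand side tends to $0$, the squeeze yields $\lim_{N\to\infty}\int\abs{\alpha-\sum_{n=1}^N\Gamma(n)}=0$.

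The conceptual crux is the construction step: naive flattening fails precisely because the double series of the $1$-norms of the \emph{terms} of each representation need not converge, and Lemma \ref{pre-2.14}—rather than a choice-based selection of good representatives as in \cite{BB85}—is what repairs this predicatively and without countable choice. The remaining care lies in invoking both directions of Lemma \ref{doubleseries} (the absolute, integral version for well-definedness of $\alpha$ and the pointwise version on $\nu_0(\alpha)$ for the domain and value identities) and in checking that the domains of sums and absolute values in $I_1$ behave as expected, so that $\nu_0(\delta_N)=\nu_0(\alpha)$ and the comparison feeding Lemma \ref{pre-2.8} is legitimate.
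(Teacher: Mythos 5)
Your proof is correct, and while the first half coincides with the paper's argument, the second half takes a genuinely different route. For the construction you do exactly what the paper does: replace each $\Gamma(n)$ by the controlled representative $\phi(\Gamma(n),n)$ from Lemma \ref{pre-2.14}, flatten along $\varphi$, and invoke both the integral-level and the pointwise directions of Lemma \ref{doubleseries} to obtain membership in $I_1$ and the domain and value conditions (the paper merely spells out the embedding bookkeeping via the auxiliary totalities $A$ and $B$ and commutative diagrams). For the limit, the paper fixes $N$, forms $\gamma:=\abs{\alpha-\sum_{n\leq N}\Gamma(n)}$, interleaves the terms $-\gamma(k)$ with the terms $\abs{\beta_n(k)}$ for $n>N$ into a single element $\delta\in I_1$ via $\varphi$, and applies Lemma \ref{pre-2.7} to $\delta$ directly, which yields $\int\gamma\leq\sum_{n>N}\sum_k\int\abs{\beta_n(k)}\leq\sum_{n>N}\big(\int\abs{\Gamma(n)}+2^{-n}\big)$. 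You instead re-apply the already-proven first half to the tail $(\abs{\Gamma(n)})_{n>N}$ to manufacture a dominating element $\beta_N\in I_1$ and conclude by monotonicity (Lemma \ref{pre-2.8}) together with positivity (Lemma \ref{pre-2.7}); both arguments ultimately bottom out in Lemma \ref{pre-2.7}, since that is what drives Lemma \ref{pre-2.8}. Your version is more modular and saves one explicit interleaving, but note that it consumes more than the bare statement of the first half: you need $\int\beta_N=\sum_{n>N}\int\abs{\Gamma(n)}$ (or at least $\int\beta_N\leq\sum_{n>N}(\int\abs{\Gamma(n)}+2^{-n})$), which is a property of the particular $\beta_N$ produced by the construction rather than of an arbitrary element satisfying the two displayed conditions, so this extra output of the construction should be recorded explicitly before the squeeze is performed.
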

\begin{proof}
Let $A:=\{ x\in\bigcap_{n=1}^\infty\nu_0\big(\Gamma(n)\big): \sum_{n=1}^\infty\abs{g_{\;\Gamma(n)}(x)}\text{ exists}\} $ and $i_A:A\hookrightarrow X$ be the embedding induced by the $e_{\Gamma(n)}$. Let 
\begin{align*}
g_A:A&\rightarrow\mathbb{R} \\
x &\mapsto \sum_{n=1}^\infty g_{\Gamma(n)}(x)
\end{align*}
For each $n\in\mathbb{N}$ let $\beta_n:=\phi(\Gamma(n),n)$ with $\phi$ as in lemma (\ref{pre-2.14}), i.e. $\beta_n\in I_1$ is s.t. for all $n\in \mathbb{N}$ we have $\beta_n=_{I_1}\Gamma(n)$ and
\begin{align*}
\sum_{k=1}^\infty\int \abs{\beta_n(k)}<\int \abs{\Gamma(n)}\;+\; 2^{-n}
\end{align*}
It follows that $\sum_{n=1}^\infty\sum_{k=1}^\infty\int \vert f_{\beta_n(k)}\vert$ exists. Let
\begin{align*}
B:=\bigg\{\; x\in\bigcap_{n\in\mathbb{N}}\bigcap_{k\in\mathbb{N}}\lambda_0(\beta_n(k)) \;:\; \sum_{n=1}^\infty\sum_{k=1}^\infty \abs{ f_{\beta_n(k)}(x)} \text{ exists}\;\bigg\}
\end{align*}
and let the bijection $\varphi:\mathbb{N}\rightarrow\mathbb{N}\times\mathbb{N}$ be given as in section (\ref{reals}) and let $\alpha\in\mathbb{F}(\mathbb{N},I)$ be given by
 $\alpha(n):=\beta_{\mathsf{pr}_1(\varphi(n))}\big(\mathsf{pr}_2(\varphi(n))\big)$. Then by lemma (\ref{doubleseries}) we have that $\sum_{n=1}^\infty\int \abs{\alpha(n)}=\sum_{n=1}^\infty\sum_{k=1}^\infty\int \vert \beta_n(k)\vert$ exists (i.e. $\alpha\in I_1$).
Moreover, we clarly have functions $\psi$ and $\psi^{-1}$ s.t.
\begin{align*}
(\psi,\psi^{-1}):\bigcap_{n\in\mathbb{N}}\lambda_0(\alpha(n))\eqsubset \bigcap_{n\in\mathbb{N}}\bigcap_{k\in\mathbb{N}}\lambda_0(\beta_n(k))
\end{align*}
By lemma (\ref{doubleseries}), for any $x\in\bigcap_{n\in\mathbb{N}}\lambda_0(\alpha(n))$ we have that $\sum_{n=1}^\infty\vert f_{\alpha(n)}(x)\vert$ exists if and only if $\sum_{n=1}^\infty\sum_{k=1}^\infty \abs{ f_{\beta_n(k)}(\psi(x))}$ exists, while for any $x\in\bigcap_{n\in\mathbb{N}}\bigcap_{k\in\mathbb{N}}\lambda_0(\beta_n(k))$ we have that $\sum_{n=1}^\infty\sum_{k=1}^\infty \abs{ f_{\beta_n(k)}(x)}$ exists if and only if $\sum_{n=1}^\infty\vert f_{\alpha(n)}(\psi^{-1}(x))\vert$ exists. It follows that
\begin{align*}
\nu_0(\alpha):=\bigg\{\; x\in\bigcap_{n\in\mathbb{N}}\lambda_0(\alpha(n)) \;:\; \sum_{n=1}^\infty\vert f_{\alpha(n)}(x)\vert \text{ exists}\;\bigg\}\; \eqsubset \;B
\end{align*}
and furthermore, again by lemma (\ref{doubleseries}), the following diagrams commute
\[
\begin{tikzcd}
\nu_0(\alpha)\arrow[rr, "\psi\big|_{\nu_0(\alpha)}", bend left]\arrow[rdd,"g_\alpha=\sum_n f_{\alpha(n)}"']& & B\arrow[ll, "\psi^{-1}\big|_B"', bend left]\arrow[ldd, "\sum_n\sum_k f_{\beta_n(k)}"] \\
\\
&\mathbb{R} 
\end{tikzcd}
\]
For $n\in\mathbb{N}$ we have an embedding $e_n:\nu_0(\beta_n)\hookrightarrow\nu_0(\Gamma(n))$ (in fact $e_n=\nu_{\beta_n\Gamma(n)}$) and thus by the definition of $B$ we have an embedding $e_n':B\hookrightarrow \nu_0(\beta_n)$ s.t. the following diagram commutes
\[
\begin{tikzcd}
B \arrow[r, "e_n'", hook]\arrow[d, hook] & \nu_0(\beta_n) \arrow[d, "e_n", hook] \arrow[rd, "\sum_k f_{\beta_n(k)}"] \\
X &\nu_0(\Gamma(n)) \arrow[l, "e_{\Gamma(n)}", left hook->]\arrow[r, "g_{\Gamma(n)}"'] &\mathbb{R}
\end{tikzcd}
\]
Moreover, for $m,n\in\mathbb{N}$ we have $e_{\Gamma(n)}\circ e_n\circ e_n'=_{\mathsmaller{\mathbb{F}(B,X)}} e_{\Gamma(m)}\circ e_m\circ e_m'$ and thus the maps $e_n\circ e_n'$ give us an embedding $e':B\hookrightarrow\bigcap_{n\in\mathbb{N}}\nu_0(\Gamma(n))$. By the commutativity of the above diagram, we get that for $x\in B$
\begin{align*}
\sum_{n=1}^\infty\abs{g_{\Gamma(n)}\big( e'(x)\big)}\leq \sum_{n=1}^\infty\sum_{k=1}^\infty \abs{f_{\beta_n(k)}(x)} <\infty
\end{align*}
Hence, $e':B\hookrightarrow A$ and following diagram commutes
\[
\begin{tikzcd}
&&X \\
\nu_0(\alpha) \arrow[r, equal]\arrow[rdd, bend right, "g_\alpha=\sum_n f_{\alpha(n)}"'] &B\arrow[rr, "e' ", hook]\arrow[ru, "i_B", hook]\arrow[dd, "\sum_n\sum_k f_{\beta_n(k)}"] &&A\arrow[lu, "i_A"', left hook->]\arrow[lldd, bend left, "g_A=\sum_n g_{\Gamma(n)}"] 
\\
\\
&\mathbb{R}
\end{tikzcd}
\]
This proves the first part of the theorem.

Now fix $N\in\mathbb{N}$ and let $\alpha\in I_1$ s.t. that it satisfies the conditions of the first part of the theorem. Let $\gamma:=\abs*{\alpha-\sum_{n=1}^N\Gamma(n)}\in I_1$. Now let $\delta\in \mathbb{F}(\mathbb{N},I)$ be an enumeration of the terms 
\[
\begin{matrix}
-\gamma(1) &-\gamma(2) &-\gamma(3) & \cdots \\
\abs{\beta_{N+1}(1)} &\abs{\beta_{N+1}(2)} &\abs{\beta_{N+1}(3)} &\cdots \\
\abs{\beta_{N+2}(1)} &\abs{\beta_{N+2}(2)} &\abs{\beta_{N+2}(3)} &\cdots \\
\vdots &\vdots &\vdots &\ddots
\end{matrix}
\]
into a single sequence using the bijection $\varphi:\mathbb{N}\rightarrow\mathbb{N}\times\mathbb{N}$. Then, by lemma (\ref{doubleseries}) we have that 
\begin{align*}
\sum_{n=1}^\infty\int\abs{\delta(n)}=\sum_{n=1}^\infty\int\abs{\gamma(n)} \;+\; \sum_{n=N+1}^\infty\sum_{k=1}^\infty\int\abs{\beta_n(k)}<\infty
\end{align*}
(i.e. $\delta\in I_1$) and for each $x\in \nu_0(\delta)$ we have that
\begin{align*}
\sum_{n=1}^\infty f_{\delta(n)}(x)&=\sum_{n=N+1}^\infty\sum_{k=1}^\infty\abs{f_{\beta_n(k)}(x)}-\sum_{m=1}^\infty f_{\gamma(m)}(x) \\[1em]
&\geq \sum_{n=N+1}^\infty\abs{g_{\Gamma(n)}(x)}-g_\gamma(x) \\[1em]
&\geq \sum_{n=N+1}^\infty\abs{g_{\Gamma(n)}(x)}-\abs*{g_\alpha(x)-\sum_{n=1}^N g_{\Gamma(n)}(x)} \\
&\geq 0
\end{align*}
By lemma (\ref{pre-2.7}) it follows that $\int\delta\geq 0$ and hence
\begin{align*}
0&\leq \int\Bigg(\abs*{\alpha-\sum_{n=1}^N \Gamma(n)}\Bigg)=\int\gamma =\sum_{n=1}^\infty\int\gamma(n)\\[1em]
&\leq\sum_{n=N+1}^\infty\sum_{k=1}^\infty\int\abs{\beta_n(k)}\leq \sum_{n=N+1}^\infty\Big(\int\abs{\Gamma(n)}+2^{-n}\Big)
\end{align*}
and the last expression converges to $0$ for $N\rightarrow\infty$, which finishes the proof.
\end{proof}

\begin{cor}\label{cor to pre-2.15}
For  all $\alpha\in I_1$ we have
\begin{align*}
\lim_{N\rightarrow\infty}\int\abs{\alpha-\sum_{n=1}^N\alpha(n)} =0
\end{align*}
\end{cor}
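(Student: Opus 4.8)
The plan is to obtain this as a direct consequence of Theorem \ref{pre-2.15}, applied to the representation built from $\alpha$ itself. Given $\alpha \in I_1$, each term $\alpha(n)$ lies in $I$, so I would push it up to $I_1$ via the embedding $h : I \hookrightarrow I_1$ from Remark \ref{basic prop of int fcts} and define $\Gamma \in \mathbb{F}(\mathbb{N}, I_1)$ by $\Gamma(n) := h(\alpha(n))$. Since $h$ commutes with $\abs{\_}$ and preserves the integral (again by Remark \ref{basic prop of int fcts}), we have $\int \abs{\Gamma(n)} = \int \abs{\alpha(n)}$, hence $\sum_n \int \abs{\Gamma(n)} = \sum_n \int \abs{\alpha(n)}$ exists precisely because $\alpha \in I_1$. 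This is exactly the hypothesis needed to invoke Theorem \ref{pre-2.15} for this $\Gamma$.

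Next I would note that the ``furthermore'' part of Theorem \ref{pre-2.15} applies to \emph{any} element of $I_1$ meeting the conditions of the first part, so it is enough to verify that our given $\alpha$ meets them relative to $\Gamma = h \circ \alpha$. Writing $A$ for the domain in the statement of the theorem, recall that $\nu_0(\alpha) = F_\alpha = \{\, x \in \bigcap_n \lambda_0(\alpha(n)) : \sum_n \abs{f_{\alpha(n)}(x)} \text{ exists} \,\}$, whereas $g_{\Gamma(n)} = g_{h(\alpha(n))} = f_{\alpha(n)}$ and $\lambda_0(\alpha(n)) \eqsubset \nu_0(h(\alpha(n))) = \nu_0(\Gamma(n))$, both of which were checked in Remark \ref{basic prop of int fcts}. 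Consequently every $x \in \nu_0(\alpha)$ lies in $\bigcap_n \nu_0(\Gamma(n))$ with $\sum_n \abs{g_{\Gamma(n)}(x)}$ convergent, so $\nu_0(\alpha) \subseteq A$, and moreover $g_\alpha(x) = \sum_n f_{\alpha(n)}(x) = \sum_n g_{\Gamma(n)}(x)$ on $\nu_0(\alpha)$. Thus $\alpha$ satisfies both requirements of the first part of the theorem.

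Finally I would read off the conclusion of the second part, namely $\lim_{N \to \infty} \int \abs{\alpha - \sum_{n=1}^N \Gamma(n)} = 0$. Because $h$ commutes with finite sums, $\sum_{n=1}^N \Gamma(n) = h\big(\sum_{n=1}^N \alpha(n)\big)$, and under the identification of $\sum_{n=1}^N \alpha(n) \in I$ with its image in $I_1$ this limit is exactly $\lim_{N \to \infty} \int \abs{\alpha - \sum_{n=1}^N \alpha(n)} = 0$, as claimed. I expect the only delicate point to be the bookkeeping in the second paragraph: one has to match the concrete domain $F_\alpha$ and value $g_\alpha$ of the given $\alpha$ against the family $\Gamma = h \circ \alpha$ through the subset-equalities $\lambda_0(\alpha(n)) \eqsubset \nu_0(\Gamma(n))$, which is what lets us reuse the given $\alpha$ rather than the (choice-free but merely existentially produced) element coming out of the first part of Theorem \ref{pre-2.15}.
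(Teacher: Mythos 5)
Your proposal is correct and follows exactly the paper's own (very terse) proof: the paper likewise sets $\Gamma(n):=\alpha(n)$ identified with its image under $h$, observes that $\alpha$ itself satisfies the conditions of the first part of Theorem (\ref{pre-2.15}), and reads off the conclusion from the ``furthermore'' part. Your version merely spells out the bookkeeping (via Remark (\ref{basic prop of int fcts})) that the paper leaves implicit.
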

\begin{proof}
It is immediate to see that $\alpha$ satisfies  the condition of the first part of theorem (\ref{pre-2.15}) for the sequence $\Gamma(n):=\alpha(n)$, where $\alpha(n)$ is identified with its image under the embedding $h$.
\end{proof}

With Lebesgue's series theorem at hand we can now show that the canonically integrable functions form a pre-integration space and as such the complete extension of the pre-integration space $(X,I,\bm\Lambda,\int)$.

\begin{theorem}\label{pre-2.18}
$(X,I_1,\bm\Lambda_1,\int)$ is a pre-integration space.
\end{theorem}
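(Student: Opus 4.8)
The plan is to verify the four conditions (PIS1)--(PIS4) of Definition \ref{preintegration space}; almost all of the underlying structure is already in place. By Remark \ref{basic prop of int fcts} the totality $I_1$ is a set, $\bm\Lambda_1$ is an $I_1$-set of partial functions, the operations $+,\cdot,\abs{\_},\wedge_1$ are functions on $I_1$ satisfying $g_{a\cdot\alpha}\eqpf a\,g_\alpha$, $g_{\alpha+\beta}\eqpf g_\alpha+g_\beta$, $g_{\abs{\alpha}}\eqpf\abs{g_\alpha}$ and $g_{\wedge_1(\alpha)}\eqpf g_\alpha\wedge 1$, and $\int\colon I_1\to\mathbb{R}$ is a function. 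Thus (PIS1) reduces to the linearity $\int(a\cdot\alpha+b\cdot\beta)=a\int\alpha+b\int\beta$, which follows at once from $\int\gamma=\sum_n\int\gamma(n)$ together with the fact that $a\cdot\alpha+b\cdot\beta$ is an interleaving of the two absolutely convergent series $\sum_n a\int\alpha(n)$ and $\sum_n b\int\beta(n)$. For (PIS3) I would take an element $p\in I$ with $\int p=1$, which exists by (PIS3) for $(X,I,\bm\Lambda,\int)$, and use that $\int h(p)=\int p=1$.

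The real content is (PIS2), and I would isolate the following \emph{positivity lemma} as the crucial step: if $\eta\in I_1$ satisfies $\int\eta>0$, then there is an $x\in\nu_0(\eta)$ with $g_\eta(x)>0$. To prove it, choose $N\in\mathbb{N}$ with $\sum_{n>N}\int\abs{\eta(n)}<\tfrac12\int\eta$ and set $i:=\sum_{n=1}^N\eta(n)\in I$, so that $\int i>\tfrac12\int\eta>0$. Splitting each tail term as $\eta(n)=\eta(n)^+-\eta(n)^-$ (as in the proof of Lemma \ref{pre-2.7}), let $\beta\in\mathbb{F}(\mathbb{N},I)$ interleave the nonnegative terms $\eta(N+n)^+$ and $\eta(N+n)^-$; then $\sum_n\int\beta(n)=\sum_{n>N}\int\abs{\eta(n)}<\int i$. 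Applying (PIS2) for the original space $(X,I,\bm\Lambda,\int)$ yields an $x$ in the relevant domains with $\sum_{n>N}\abs{f_{\eta(n)}(x)}=\sum_n f_{\beta(n)}(x)<f_i(x)$. Convergence of this tail of absolute values places $x$ in $\nu_0(\eta)$, and then $g_\eta(x)=f_i(x)+\sum_{n>N}f_{\eta(n)}(x)\geq f_i(x)-\sum_{n>N}\abs{f_{\eta(n)}(x)}>0$, as required. The point to watch here is that the nonnegative sequence must contain \emph{both} $\eta(n)^+$ and $\eta(n)^-$, so that the found point lies in the domain $\nu_0(\eta)$ and $g_\eta(x)$ is actually defined; using only one sign controls the value but not the domain.

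With the positivity lemma at hand I would reduce the general case of (PIS2) to it by means of Lebesgue's series theorem. Given $\alpha\in I_1$ and $\Gamma\in\mathbb{F}(\mathbb{N},I_1)$ with $g_{\Gamma(m)}\geq 0$ and $\ell:=\sum_k\int\Gamma(k)<\int\alpha$, apply Theorem \ref{pre-2.15} to the sequence $\Theta$ with $\Theta(1):=(-1)\cdot\alpha$ and $\Theta(k+1):=\Gamma(k)$ (here $\sum_k\int\abs{\Theta(k)}$ exists because $\int\abs{\Gamma(k)}=\int\Gamma(k)$ by Lemma \ref{pre-2.8}). This produces $\theta\in I_1$ with $\nu_0(\theta)\subseteq\nu_0(\alpha)\cap\bigcap_k\nu_0(\Gamma(k))$, with $g_\theta(x)=-g_\alpha(x)+\sum_k g_{\Gamma(k)}(x)$ on $\nu_0(\theta)$, and with $\int\theta=\ell-\int\alpha<0$ (using the second part of Theorem \ref{pre-2.15} and Lemma \ref{pre-2.3}). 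Applying the positivity lemma to $-\theta$ gives $x\in\nu_0(\theta)$ with $g_\theta(x)<0$, whence $x$ lies in the required domains and $\sum_k g_{\Gamma(k)}(x)=g_\alpha(x)+g_\theta(x)<g_\alpha(x)$. I expect the main obstacle to be the bookkeeping of the domains of definition throughout this reduction, which is exactly what the embeddings of intersections and the double-series Lemma \ref{doubleseries} are designed to control.

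Finally, for (PIS4) I would exploit that $I$ is dense in $I_1$ in $1$-norm: by Corollary \ref{cor to pre-2.15}, $\lim_N\norm{\alpha-\sum_{n=1}^N\alpha(n)}_1=0$ for every $\alpha\in I_1$. Writing $A(m):=m\cdot\wedge_1(m^{-1}\cdot\alpha)$ and $B(m):=m^{-1}\cdot\wedge_1(m\cdot\abs{\alpha})$, one has $g_{A(m)}\eqpf g_\alpha\wedge m$ and $g_{B(m)}\eqpf\abs{g_\alpha}\wedge m^{-1}$, so that $0\leq g_\alpha-g_{A(m)}$ and $0\leq g_{B(m)}$ are both controlled, via the $1$-Lipschitz estimates $(a-m)^+\leq(b-m)^++\abs{a-b}$ and $a\wedge c\leq\abs{a-b}+(b\wedge c)$, by an approximant $h(\sum_{n=1}^N\alpha(n))$ together with the corresponding truncations in $I$. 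Combining the density estimate with (PIS4) for $(X,I,\bm\Lambda,\int)$ and monotonicity of $\int$ (Lemma \ref{pre-2.8}) then gives $\lim_m\int A(m)=\int\alpha$ and $\lim_m\int B(m)=0$; here care must be taken to assemble an explicit modulus so that no countable choice is invoked.
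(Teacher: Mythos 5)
Your proposal is correct, and for (PIS1), (PIS3) and (PIS4) it follows essentially the same path as the paper (operations and linearity from Remark \ref{basic prop of int fcts}, the unit via $h$, and density via Corollary \ref{cor to pre-2.15} combined with the Lipschitz estimates for truncation and (PIS4) of the base space). Where you genuinely diverge is on (PIS2), the heart of the matter. The paper argues directly: it first reduces to $g_\alpha\geq 0$ by passing to $\abs{\alpha}+\alpha$, replaces each $\Gamma(n)$ by $\beta_n:=\phi(\Gamma(n),n+p)$ using Lemma \ref{pre-2.14} to control $\sum_k\int\abs{\beta_n(k)}$, rearranges the resulting double series via Lemma \ref{doubleseries}, and applies (PIS2) of $(X,I,\bm\Lambda,\int)$ \emph{once} to the single inequality $\sum_n\sum_k\int\abs{\beta_n(k)}+\sum_{k>N}\int\abs{\alpha(k)}<\int\abs{\sum_{k=1}^N\alpha(k)}$, reading off both the domain membership and the strict inequality from the resulting point. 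You instead isolate a positivity lemma for $I_1$ (if $\int\eta>0$ then $g_\eta(x)>0$ for some $x\in\nu_0(\eta)$), prove it by the same head-plus-interleaved-tails device — and you correctly identify the one genuinely delicate point, namely that both $\eta(n)^+$ and $\eta(n)^-$ must appear so that the point produced lands in $\nu_0(\eta)$ — and then reduce the general case to it by applying Theorem \ref{pre-2.15} to $\Theta:=((-1)\cdot\alpha,\Gamma(1),\Gamma(2),\dots)$ and Lemma \ref{pre-2.3} to compute $\int\theta=\ell-\int\alpha<0$. There is no circularity, since Theorem \ref{pre-2.15} is established before and independently of the present theorem. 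Your route buys a cleaner conceptual structure (it mirrors the classical reduction of the continuity axiom to a positivity statement, and it dispenses with the paper's ``WLOG $g_\alpha\geq 0$'' manoeuvre, since the signs are absorbed into $\theta$); the cost is that the domain bookkeeping for $\nu_0(\theta)\subseteq\nu_0(\alpha)\cap\bigcap_k\nu_0(\Gamma(k))$ is delegated to the first part of Theorem \ref{pre-2.15}, whereas the paper keeps it explicit and self-contained at the price of a longer chain of estimates.
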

\begin{proof}
We have already established (PIS1) in remark (\ref{basic prop of int fcts}). Let $\Gamma\in\mathbb{F}(\mathbb{N},I_1)$ be such that $\ell:=\sum_n\int g_{\Gamma(n)}$ exists and for all $n\in\mathbb{N}$ we have that $g_{\Gamma(n)}\geq 0$, let $\alpha\in I_1$ be such that $\ell<\int\alpha$. Without loss of generality we can assume that $g_\alpha\geq 0$. Otherwise let $\alpha':=\abs{\alpha}+\alpha$ and let $\Gamma'\in\mathbb{F}(\mathbb{N},I_1)$ be given by $\Gamma'(1):=\abs{\alpha}$ and $\Gamma'(n):=\Gamma(n-1)$ for $n\geq 2$. Then $g_{\alpha'}\geq 0$ and $\sum_n\int\Gamma'(n)=\int\abs{\alpha}+\ell<\int\alpha'$. Showing (PIS2) for $\alpha'$ and $\Gamma'$ also gives the desired result for $\alpha$ and $\Gamma$.

Let $\varepsilon>0$ be s.t. $\ell+ 3\varepsilon<\int\alpha$ and $p\in\mathbb{N}$ s.t. $2^{-p}<\varepsilon$. For $n\in\mathbb{N}$ let $\beta_n:=\phi\big(\Gamma(n),n+p\big)$ with $\phi$ as defined in lemma (\ref{pre-2.14}), i.e. $\beta_n=_{I_1}\Gamma(n)$ and
\begin{align*}
\sum_{k=1}^\infty\int\abs{\beta_n(k)}<\int\abs{\Gamma(n)}+\frac{1}{2^{n+p}}<\int\Gamma(n)+\frac{\varepsilon}{2^n}
\end{align*}
Let $N\in\mathbb{N}$ be large enough so that 
\begin{itemize}
\item $\int\alpha<\int\abs{\sum_{k=1}^N\alpha(k)} + \varepsilon$
\item $\sum_{k\geq N+1}\int\abs{\alpha(k)}<\varepsilon$
\end{itemize}
Such a $N$ exists since $\alpha\in I_1$ and
\begin{align*}
\lim_{N\rightarrow\infty}\int\abs*{\sum_{k=1}^N\alpha(k)}=\int\abs{\alpha}=\int\alpha
\end{align*}
Then $\sum_n\sum_k\int\abs{\beta_n(k)}$ exists and
\begin{align*}
\sum_{n=1}^\infty\sum_{k=1}^\infty\int\abs{\beta_n(k)}+\sum_{k=N+1}^\infty\int\abs{\alpha}<\sum_{n=1}\int\Gamma(n)+2\varepsilon <\int\alpha-\varepsilon<\int\abs*{\sum_{k=1}^N\alpha(k)}
\end{align*}
Since $(X,I,L\int)$ is an integration space there is a $x\in\big(\bigcap_n\lambda_0(\alpha(n))\big)\cap\big(\bigcap_n\nu_0(\Gamma(n))\big)$ s.t.
\begin{align*}
\sum_{n=1}^\infty\sum_{k=1}^\infty\abs{f_{\beta_n(k)}(x)}+\sum_{k=N+1}^\infty\abs{f_{\alpha(k)}(x)}<\abs*{\sum_{k=1}^N f_{\alpha(k)}(x)}
\end{align*}
it follows that $x\in \nu_0(\alpha)\cap\big(\bigcap_n\nu_0(\Gamma(n))\big)$, $\sum_n\abs{g_{\Gamma(n)}(x)}$ exists and
\begin{align*}
\sum_{n=1}^\infty g_{\Gamma(n)}(x)&=\sum_{n=1}^\infty\sum_{k=1}^\infty f_{\beta_n(k)}(x)\leq\sum_{n=1}^\infty\sum_{k=1}^\infty\abs{f_{\beta_n(k)}(x)} \\[1em]
&<\abs*{\sum_{k=1}^N f_{\alpha(k)}(x)}-\abs*{\sum_{k=N+1}^\infty f_{\alpha(k)}(x)} \\[1em]
&\leq\abs*{\sum_{k=1}^\infty f_{\alpha(k)}(x)}=\abs{g_\alpha(x)}=g_\alpha(x)
\end{align*}
This finishes the verification of (PIS2). For (PIS3) let $i\in I$ be such that $\int i=1$, then $\int h(i)=1$.

To verify (PIS4) let $\alpha\in I_1$ and $\varepsilon>0$. By corollary (\ref{cor to pre-2.15}) there is an $i\in I$ s.t. $\int\abs{\alpha-i}\leq\nicefrac{\varepsilon}{3}$. For $r>0$ and $\alpha\in I_1$ define $\alpha\wedge r:=r\cdot(\wedge_1(r^{-1}\cdot\alpha))$. Take $N\in \mathbb{N}$ s.t. $\int i\wedge N >\int i-\nicefrac{\varepsilon}{3}$, then for all $r>0$
\begin{align*}
\abs*{\int \alpha\wedge r -\int i\wedge r}\leq\int\abs{\alpha\wedge r-i\wedge r} \overset{\mathsmaller{\text{ lemma (\ref{pre-2.8})}}}{\leq}\int\abs{\alpha-i}<\frac{\varepsilon}{3}
\end{align*}
Hence for all $n\in\mathbb{N}$
\begin{align*}
\int\alpha\geq\int\alpha\wedge n\geq\int \alpha\wedge N>\int i\wedge N -\frac{\varepsilon}{3}>\int i-\frac{2\varepsilon}{3}>\int \alpha-\varepsilon
\end{align*}
It follows that $\lim_{n\rightarrow\infty}\int\alpha\wedge n=\int\alpha$. Now take $m\in\mathbb{N}$ s.t. $\int \abs{i}\wedge m^{-1}<\nicefrac{\varepsilon}{2}$.  Using the inverse triangle-inequality and lemma (\ref{pre-2.8}) we get that
\begin{align*}
\int\abs{\;\abs{\alpha}\wedge r-\abs{i}\wedge r\;}\leq\int\abs{\;\abs{\alpha}-\abs{i}\;}\leq \int\abs{\alpha-i}<\frac{\varepsilon}{3}
\end{align*}
for all $r>0$, and thus for $n\geq m$
\begin{align*}
0\leq \int \abs{\alpha}\wedge n^{-1}\leq\int\abs{\alpha}\wedge m^{-1}\leq\int\abs{i}\wedge m^{-1}+\frac{\varepsilon}{3}<\frac{\varepsilon}{2}+\frac{\varepsilon}{3}<\varepsilon
\end{align*}
which finishes the verification of (PIS4).
\end{proof}

\begin{theorem}\label{pre-2.16}
The embedding $h:I\hookrightarrow I_1$ is norm preserving and $(I,=_{\int},\norm{\_}_1)$ becomes a dense subspace of $(I_1,=_{\int},\norm{\_}_1)$ through $h$.
\end{theorem}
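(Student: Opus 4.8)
The plan is to read everything off the compatibility properties of $h$ collected in remark (\ref{basic prop of int fcts}), thereby reducing the theorem to the already-established corollary (\ref{cor to pre-2.15}). I would proceed in three steps: show that $h$ descends to an injective map on the $=_\int$-quotients, show that it is norm preserving (hence a linear isometry onto its image), and finally show that the image is dense.

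For norm preservation, recall from remark (\ref{basic prop of int fcts}) that $\abs{h(i)}=_{I_1}h(\abs{i})$ and that the extended integral satisfies $\int h(j)=\int j$ for every $j\in I$. Hence
\[
\norm{h(i)}_1=\int\abs{h(i)}=\int h(\abs{i})=\int\abs{i}=\norm{i}_1 .
\]
Since $h$ is also compatible with $+$ and with scalar multiplication, we have $h(i)-h(j)=_{I_1}h(i-j)$, and the same computation applied to $i-j$ gives $\int\abs{h(i)-h(j)}=\int\abs{i-j}$. This single identity shows at once that $h$ sends $=_\int$-equivalent elements to $=_\int$-equivalent elements, that it is injective on the quotients (if $\int\abs{h(i)-h(j)}=0$ then $\int\abs{i-j}=0$), and that it is linear and isometric for $\norm{\_}_1$. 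Together these realize $(I,=_\int,\norm{\_}_1)$ as a normed subspace of $(I_1,=_\int,\norm{\_}_1)$ via $h$.

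It remains to prove density, and this is where the actual analytic content sits, but it has already been isolated as corollary (\ref{cor to pre-2.15}). Given $\alpha\in I_1$, set $i_N:=\sum_{n=1}^N\alpha(n)\in I$; compatibility of $h$ with finite sums gives $h(i_N)=_{I_1}\sum_{n=1}^N\alpha(n)$, the latter sum being formed in $I_1$ along the embedding $h$. Corollary (\ref{cor to pre-2.15}) then yields
\[
\norm{\alpha-h(i_N)}_1=\int\abs*{\alpha-\sum_{n=1}^N\alpha(n)}\longrightarrow 0 \quad(N\to\infty),
\]
so every $\alpha\in I_1$ is a $\norm{\_}_1$-limit of elements of $h(I)$. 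The main obstacle is thus not in this theorem at all: the genuinely hard part, namely that the finite partial sums of a representation converge in $1$-norm to the canonically integrable function they represent, is exactly Lebesgue's series theorem (\ref{pre-2.15}) and was discharged there. What remains here is bookkeeping with the compatibility relations for $h$.
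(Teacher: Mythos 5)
Your proposal is correct and follows essentially the same route as the paper, whose entire proof of this theorem is the single line ``This follows directly from Corollary (\ref{cor to pre-2.15})''; you have simply made explicit the bookkeeping the paper leaves implicit, namely norm preservation via the compatibility relations $h(\abs{i})=_{I_1}\abs{h(i)}$ and $\int h(i)=\int i$ from Remark (\ref{basic prop of int fcts}), and density via the identification $h\big(\sum_{n=1}^N\alpha(n)\big)=_{I_1}\sum_{n=1}^N\alpha(n)$ feeding into the corollary. No gaps.
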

\begin{proof}
This follows directly from (\ref{cor to pre-2.15}).
\end{proof}

\begin{theorem}\label{pre-2.17}
$I_1$ is complete with respect to $\norm{\_}_1$.
\end{theorem}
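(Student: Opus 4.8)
The plan is to prove completeness in the manner standard for $L^1$-type spaces, exploiting the constructive Lebesgue series theorem (\ref{pre-2.15}) together with the density statement (\ref{pre-2.16}), while keeping explicit track of moduli so that no choice principle is needed. Let $(\alpha_m)_{m\in\mathbb{N}}$ be a Cauchy sequence in $(I_1,=_{\int},\norm{\_}_1)$ with strictly increasing modulus $M$, so that $\norm{\alpha_n-\alpha_m}_1\leq 2^{-p}$ whenever $n,m\geq M(p)$. The key idea is to extract a rapidly converging subsequence whose consecutive differences are summable in norm, realize its limit through theorem (\ref{pre-2.15}), and then transfer convergence back to the full sequence.

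First I would pass to the subsequence indexed by $n_k:=M(k)$. Since $M$ is strictly increasing, $n_{k+1},n_k\geq M(k)$, hence $\norm{\alpha_{n_{k+1}}-\alpha_{n_k}}_1\leq 2^{-k}$. I then define $\Gamma\in\mathbb{F}(\mathbb{N},I_1)$ by $\Gamma(k):=\alpha_{n_{k+1}}-\alpha_{n_k}$, using the vector-space operations on $I_1$ from remark (\ref{basic prop of int fcts}). Then $\int\abs{\Gamma(k)}=\norm{\Gamma(k)}_1\leq 2^{-k}$, so $\sum_k\int\abs{\Gamma(k)}$ exists (with modulus coming from the geometric series), and the hypothesis of theorem (\ref{pre-2.15}) is satisfied.

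Applying theorem (\ref{pre-2.15}) to $\Gamma$ produces some $\alpha^\ast\in I_1$ with $\lim_{N\to\infty}\int\abs{\alpha^\ast-\sum_{n=1}^N\Gamma(n)}=0$; inspecting the final estimate in that proof and inserting $\int\abs{\Gamma(n)}\leq 2^{-n}$ yields the explicit bound $\int\abs{\alpha^\ast-\sum_{n=1}^N\Gamma(n)}\leq 2^{-(N-1)}$. The partial sums telescope to $\sum_{n=1}^N\Gamma(n)=_{\int}\alpha_{n_{N+1}}-\alpha_{n_1}$, the difference integrating to $0$ by lemmas (\ref{pre-2.7}) and (\ref{pre-2.8}) since its associated partial function vanishes pointwise on its domain. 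Setting $\beta:=\alpha^\ast+\alpha_{n_1}\in I_1$ then gives $\norm{\beta-\alpha_{n_{N+1}}}_1=\int\abs{\alpha^\ast-\sum_{n=1}^N\Gamma(n)}\leq 2^{-(N-1)}$, so the subsequence converges to $\beta$ with an explicit modulus.

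Finally I would upgrade subsequential convergence to convergence of the whole sequence by the triangle inequality. Given $p\in\mathbb{N}$, for $m\geq M(p+1)$ I would take $N:=p+2$, so that $\norm{\alpha_m-\alpha_{n_{N+1}}}_1\leq 2^{-(p+1)}$ (as $m,n_{N+1}\geq M(p+1)$ and $(\alpha_m)$ is Cauchy) and $\norm{\alpha_{n_{N+1}}-\beta}_1\leq 2^{-(p+1)}$, whence $\norm{\alpha_m-\beta}_1\leq 2^{-p}$. This exhibits $M'(p):=M(p+1)$ as a modulus of convergence of $(\alpha_m)$ to $\beta$, establishing completeness. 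The main obstacle is not any single deep step but the bookkeeping of moduli throughout — in particular extracting the explicit rate $2^{-(N-1)}$ from theorem (\ref{pre-2.15}) and checking that the telescoping holds modulo the coarse equality $=_{\int}$ rather than the finer $=_{I_1}$; the remaining reasoning is the routine metric-space fact that a Cauchy sequence with a convergent subsequence converges to the same limit.
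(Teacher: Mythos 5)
Your proposal is correct and follows essentially the same route as the paper: both apply theorem (\ref{pre-2.15}) to the summable telescoping differences along the subsequence indexed by the Cauchy modulus, and then transfer convergence to the whole sequence by the triangle inequality. The only differences are cosmetic --- the paper absorbs the first term $\Gamma(M(1))$ into the series so no shift by $\alpha_{n_1}$ is needed, and it is content with ``some modulus $M'$'' where you extract the explicit rate $2^{-(N-1)}$.
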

\begin{proof}
Let $\Gamma\in\mathbb{F}(\mathbb{N},I_1)$ be a Cauchy-sequence with strictly increasing modulus $M:\mathbb{N}\rightarrow\mathbb{N}$, i.e. for each $p\in\mathbb{N}$ and $n,m\geq M(p)$ we have that $\int\abs{\Gamma(n)-\Gamma(m)}\leq2^{-p}$. Define $\Delta\in\mathbb{F}(\mathbb{N},I_1)$ by $\Delta(1):=\Gamma\big(M(1)\big)$ and $\Delta(n):=\Gamma\big(M(n)\big)-\Gamma\big(M(n-1)\big)$ for $n\geq 2$. Then $\sum_n\int\abs{\Delta(n)}$ exists and by theorem (\ref{pre-2.15}) there is an $\alpha\in I_1$ s.t.
\begin{align*}
\norm*{\alpha-\Gamma\big(M(n)\big)}_1=\int\abs*{\alpha-\sum_{k=1}^n\Delta(k)}\xrightarrow{n\rightarrow\infty}0
\end{align*}
With some modulus of convergence $M'$. Now, let $p\in\mathbb{N}$ and $n:=\big(M'(p+1)\big)\vee\big(M(p+1)\big)$. Then for $m\geq n$ we have
\begin{align*}
\norm*{\alpha-\Gamma(m)}_1\leq \norm*{\alpha-\Gamma(M(n))}_1+\norm*{\Gamma(M(n))-\Gamma(m)}_1\leq\frac{1}{2^{p+1}}+\frac{1}{2^{p+1}}=\frac{1}{2^p}
\end{align*}
i.e. $\Gamma$ converges to $\alpha$ in the norm $\norm{\_}_1$ with modulus 
\begin{align*}
M'':=\big(M'(\_+1)\big)\vee\big(M(\_+1)\big)
\end{align*}
\end{proof}

\chapter{Conclusion}
This paper consists of two parts. The first part is concerned with Bishop's set theory. Following \cite{Petrakis2018} and \cite{Petrakis2019} we introduced an informal (or semi-formal) language to define the basic notions of BST along with those notions we need for BCMT. Moreover we gave a detailed account of the various forms of set-indexed families. The general theory of these families and their application to different areas of constructive mathematics will be studied extensively in \cite{Petrakis2020}. We restricted ourselves to the parts needed for BCMT. 

In the second part saw how we can apply the idea of set-indexed families to constructive measure theory and work towards a predicative reconstruction of BCMT. To that end we introduced the notions of pre-integration and pre-measure space as a predicative counterpart to integration and measure spaces and revisited central parts BCMT building on these two notions. In particular we have given a concrete example of a pre-measure space, namely the pre-measure space of detachable subsets with the dirac measure, we have shown that for any pre-measure space we can construct the pre-integration space of simple functions and we have shown that the complete extension of a pre-integration space can be constructed predicatively by only considering the canonically integrable functions.

Of course, there are a lot of remaining questions that need to be resolved in order to show that the notions of pre-integration and pre-measure space can be applied fruitfully to get a predicative account of BCMT. One important open task is to define the pre-measure space of integrable \emph{sets} induced by the completion of a pre-integration space. The problem here is that we have to introduce two families of complemented subsets, the set of integrable sets and the larger family that takes the place of the totality of all complemented subsets. It seems that there is no canonical way of introducing this larger family. A related open task is to give the appropriate axioms for a \emph{complete} pre-measure space. Again, it seems that there are some non-canonical choices to be made and it would be really interesting to see if all of section 10 of chapter 6 of \cite{BB85} can be made predicative, using our notion of pre-integration and pre-measure-space.

Another big issue is the treatement of measurable functions that play a crucial role in BCMT and especially its applications to e.g. constructive functional analysis. From the predicative  point of view, the totality of measurable functions is not a set since it contains all integrable functions. There have been metric approaches to measurable functions, e.g. in \cite{Spitters:phd} and \cite{Spitters2006} the space of measurable functions is defined as the completion of the space of integrable functions equipped with a certain uniform structure and in \cite{Ishihara2017} the space of measurable functions is the completion of an (elementary) integration space with respect to the pseudo-metric
\begin{align*}
d_m(f,g):=\int\big(\abs{f-g}\wedge 1\big)
\end{align*}
It would be really interesting to see if it is possible to apply our strategy (to define the complete extension predicatively) to the measureable functions, i.e. if one could define a set of `canonically measurable' functions and show that its index-set when equipped with the corresponding topological structure is the completion of the pre-integration space of canonically integrable functions.

Finally, it would be quite desirable to formalize the work presented here. As already mentioned, parts of BCMT are already formalized in the proof assistant Coq, which is impredicative, in \cite{Semeria2019}, but it seems that our predicative account could lead to a formalization that carries a lot of computational content that is otherwise destroyed by impredicativities or the use of choice principles.

\bibliographystyle{alpha}
\bibliography{mastermathverzeichnis}
\end{document}